\def\part{\@startsection{part}{0}%
  \z@{\linespacing\@plus\linespacing}{.5\linespacing}%
  {\normalfont\bfseries\centering}}
\newtheorem{thm}{Theorem}[section]
\newtheorem{lem}[thm]{Lemma}
\newtheorem{prop}[thm]{Proposition}
\newtheorem{cor}[thm]{Corollary}
\theoremstyle{remark}
\newtheorem{remark}{Remark}
\newtheorem{example}{{\bf {\em Example}}}
\newtheorem{definition}[thm]{{\bf {\em Definition}}}
\title[Multilinear Duality and Factorisation]{Multilinear Duality and Factorisation
  for Brascamp--Lieb-type Inequalities}
\author{Anthony Carbery, Timo S. H\"anninen and Stef\'an Ingi Valdimarsson}
\address{Anthony Carbery, 
School of Mathematics and Maxwell Institute for Mathematical Sciences, 
University of Edinburgh,
James Clerk Maxwell Building, 
Peter Guthrie Tait Road,
King's Buildings, 
Mayfield Road, 
Edinburgh, EH9 3FD, 
Scotland.}
\email{A.Carbery@ed.ac.uk}
\address{Timo S. H\"anninen,
Department of Mathematics and Statistics, University of Helsinki, P.O. Box 68, FI-00014 Helsinki, Finland, and School of Mathematics and Maxwell Institute for Mathematical Sciences, 
University of Edinburgh,
James Clerk Maxwell Building, 
Peter Guthrie Tait Road,
King's Buildings, 
Mayfield Road, 
Edinburgh, EH9 3FD, 
Scotland.}
\email{timo.s.hanninen@helsinki.fi}
\address{Stef\'an Ingi Valdimarsson,
Arion banki, Borgart\'un 19, 105  Reykjav\'ik, Iceland, and Science
Institute, University of Iceland, Dunhagi 5, 107 Reykjav\'ik, Iceland}
\email{sivaldimarsson@gmail.com}
\date{4th September 2018, revised 5th February and 30th September 2020.}
\begin{document}
\setcounter{tocdepth}{1}
\begin{abstract}
  We initiate the study of a duality theory which applies to norm inequalities for pointwise weighted
  geometric means of positive operators.
  The theory finds its expression in terms of certain pointwise factorisation properties of function spaces
  which are naturally associated
  to the norm inequality under consideration. We relate our theory to the
  Maurey--Nikisin--Stein theory of factorisation of operators,
  and present a fully multilinear version of Maurey's fundamental theorem on factorisation of operators
  through $L^1$. The development of the theory involves convex
  optimisation and minimax theory, functional-analytic considerations concerning the dual of $L^\infty$,
  and the Yosida--Hewitt theory of finitely additive measures. We consider the connections of the theory with the
  theory of interpolation of operators. We discuss the
  ramifications of the theory in the context of concrete families of geometric inequalities,
  including Loomis--Whitney inequalities, Brascamp--Lieb inequalities and multilinear Kakeya inequalities. 
\end{abstract}
\maketitle
\tableofcontents
\section{Introduction}\label{intro}
In this paper we introduce and develop a general functional-analytic principle which
gives a unifying framework for a range of multilinear phenomena that have recently arisen
in a number of areas of mathematical analysis.

\medskip
\noindent
We shall be mainly concerned with norm inequalities for pointwise
weighted geometric means
$$ \prod_{j=1}^d (T_j f_j(x))^{\alpha_j}$$
of positive linear operators $T_j$ defined on suitable spaces, where $\alpha_j \geq 0$
and $\sum_{j=1}^d \alpha_j = 1$. Before we describe the scope of our work in this paper, and to set the scene
for our study,
we briefly visit the analogous territory in the linear setting ($d=1$) in order to help provide a
context for what we are aiming to achieve. Throughout the whole paper we shall be dealing with real-valued
rather than complex-valued functions.

\subsection{The linear setting}\label{linset}

\medskip
\noindent
Let $X$ and $Y$ be measure spaces and let $T: L^p(Y) \to L^q(X)$ be a bounded linear operator, that is,
it satisfies
\begin{equation}\label{lin1}
\|Tf\|_q \leq A \|f\|_p
  \end{equation}
for all $f \in L^p(Y)$, for some $A > 0$. Here, $1 \leq p \leq \infty$ and $0 < q < \infty$. Since $L^q$
is a Banach space only when $q \geq 1$, it is natural to focus separately on the regimes $q \geq 1$ and $0 < q < 1$.  

\medskip
\noindent
(i) {\bf Case $q \geq 1$}. Since $\|h\|_q = \max\{ |\int hg| \, : \, \|g\|_{q'} = 1\}$, inequality
  \eqref{lin1} holds if and only if for all $ f \in L^p$ and {\em all} $g \in L^{q'}$ we have
  \begin{equation}\label{lin2}
    | \int (Tf) g| \leq A \|f\|_p\|g\|_{q'}.
    \end{equation}
  Using the relation $\int (Tf) g = \int f (T^* g)$, this is in turn
  equivalent to the statement $\|T^*g\|_{p'} \leq A \|g\|_{q'}$ -- that is the boundedness of the adjoint
  operator $T^*$ between the dual spaces of $L^q$ and $L^p$ respectively, (at least when
  $1 < p, \, q < \infty$). We are therefore firmly in the terrain
  of classical linear duality theory, a theory whose utility and importance cannot be overstated.
  Notice that if $q = 1$ and $T$ is also
  assumed to be positive (that is, $Tf \geq 0$ whenever $f \geq 0$), the equivalence of \eqref{lin1} and \eqref{lin2}
  is essentially without content since in this case it suffices to check on the function $g \equiv 1$.

  \medskip
  \noindent
(ii)  {\bf Case $0 <q <1$}. Since $\|h\|_q = \min\{ |\int hg| \, : \, \|g\|_{q'} = 1\}$, we have that
  \eqref{lin1} holds if and only if for all $f \in L^p$ {\em there exists} an (extended real-valued) $g \in L^{q'}$ such that
  \eqref{lin2} holds. Note that $q' < 0$ in this situation, so it is implicit that such a $g$ satisfies
  $g(x) \neq 0$ almost everywhere.
  It is a remarkable result of Maurey, that, under certain conditions -- such as positivity of $T$ -- given
  inequality
  \eqref{lin1}, there exists a {\em single} $g\in L^{q'}$ with $\|g\|_{q'} = 1$ such that
  \eqref{lin2} holds
  for all $f \in L^p$. Such a result is an instance of the celebrated theory of factorisation
  of operators which is developed in \cite{Mau}.
  Indeed, it is a case of {\em factorisation through $L^1$}
  since the inequality
  $$ | \int (Tf) g| \leq A \|f\|_p$$
  demonstrates that $T$ may be factorised as $T = M_{g^{-1}} \circ S$ where
  $S = M_g \circ T$ satisfies $\|S\|_{L^p \to L^1} \leq A$ and $M_{g^{-1}}$, the operator
  of multiplication by $g^{-1}$,
  satisfies $\|M_{g^{-1}}\|_{L^1 \to L^{q}} = \|g\|_{q'}^{-1} = 1$.
  
  \bigskip
  \noindent
  Observe that there is no obvious point of direct contact between the two regimes $q \geq 1$ and
  $0 < q < 1$ in this linear setting.

  \medskip
  \noindent
  The result of Maurey to which we refer falls within the wider scope of Maurey--Nikisin--Stein
  theory, which considers factorisation of operators in a broad variety of contexts. This includes
  consideration of non-positive operators, sublinear operators (for example maximal functions),
  operators with various domains and codomains, and factorisation through
  various weak- and strong-type spaces, often under some auxiliary hypotheses.
  The particular case of positive operators defined on normed
  lattices, taking values in $L^q$ for $q < 1$, and factorising
  through (strong-type) $L^1$ was considered by Maurey, however, and for this reason we refer specifically
  to the Maurey theory rather than the broader  Maurey--Nikisin--Stein theory. For an overview of this
  larger theory see \cite{GCRdeF}, \cite{Gilb}, \cite{Mau} and \cite{Pis}.
  
\subsection{The multilinear setting}\label{multlinset}
The purpose of this paper is to develop duality and factorisation theories for certain classes of
multilinear operators which are analogous to those that we have set out above in the linear setting. Amusingly,
the notion of ``factorisation'' manifests itself in two distinct ways in our development. One of
these is as a multilinear analogue of a formulation of a Maurey-type theorem as was briefly
outlined in the discussion of the case $0 < q < 1$ above. The other is that our
duality theory (corresponding to the case $1 \leq q < \infty$) will be expressed in terms
of pointwise factorisation properties of certain spaces of functions. {\em Even simple instances of these pointwise factorisation results are new and striking: see Section~\ref{appl1} below.}  

\medskip
\noindent
We begin by describing the scenario in which we shall work and the classes of operators we shall consider.

\medskip
\noindent
Let $(X,{\rm d}\mu)$ and $(Y_j,{\rm d}\nu_j)$, for $j=1,\dots,d$, be measure spaces,\footnote{Throughout the paper,
  when we refer to measure spaces $X$, $Y$ or $Y_j$ without explicit mention of the measure, it is implicit that
  the corresponding measures are $\mu$, $\nu$ and $\nu_j$ respectively, unless the context demands otherwise.}
let $\mathcal{S}(Y_j)$
denote the class of real-valued simple functions (i.e. finite linear combinations of 
characteristic functions of measurable sets of {\em finite} measure) on $Y_j$, and let $\mathcal{M}(X)$ 
denote the class of real-valued measurable functions on $X$. Let $T_1,\dots,T_d$ be 
linear maps 
$$ T_j : \mathcal{S}(Y_j) \to \mathcal{M}(X).$$
We suppose throughout that the $T_j$ are positive in the 
sense that if $f \geq 0$ almost everywhere on $Y_j$, then $T_j f \geq 0$ almost everywhere on $X$.

\medskip
In this paper we shall be concerned with ``multilinear'' Lebesgue-space inequalities of the form
\begin{equation}
\label{premainineq}
\left\|\prod_{j=1}^d (T_jF_j)^{\beta_j}\right\|_{L^q(X)} \leq C
\prod_{j=1}^d \Big\|F_j\Big\|_{L^{p_j}(Y_j)}^{\beta_j}
\end{equation}
where $0 < \beta_j < \infty$, $0 < p_j \leq  \infty$\footnote{We shall soon focus on the case $p_j \geq 1$
and $\sum_j \beta_j = 1$.}and
$0 < q \leq \infty$.

\medskip
\noindent
These inequalities are to be interpreted in an {\em a priori} sense, with the $F_j$ 
being nonnegative simple functions
defined on $Y_j$. We are especially interested in the case 
that either the $T_j$ are not bounded operators from $L^{p_j}(Y_j, {\rm d} \nu_j)$ to $L^q(X, {\rm d} \mu)$, or that they 
are bounded but do not enjoy effective bounds.

\medskip
\noindent
Strictly speaking such inequalities are multilinear only when each $\beta_j = 1$; we shall nevertheless abuse language
and will refer to the inequalities under consideration as ``multilinear''. In fact the case when $\sum_{j=1}^d \beta_j =1$
will play a special role in what follows. 
Of course we may always assume either that $q=1$ or that $\sum_{j=1}^d \beta_j = 1$. 

\medskip
\noindent
To fix ideas, we discuss some examples of inequalities falling under the scope of our study.

\medskip
\noindent
\subsection{Examples}\label{exples}

\begin{example}\label{Holder}{\rm [H\"older's inequality]
The multilinear form of H\"older's inequality for nonnegative functions is simply
$$ \int_X F_1(x)\cdots F_d(x) {\rm d} \mu(x) \leq \|F_1 \|_{L^{p_1}(X)} \cdots\|F_d \|_{L^{p_d}(X)}$$ 
where $p_j > 0$ and $\sum_{j=1}^d p_j^{-1} = 1$. This is of the form \eqref{premainineq}, with $T_j = I$ for all $j$,
$q=1$ and each $\beta_j = 1$. But, for any fixed set of positive exponents $\{\beta_j\}$, it is also trivially equivalent to
the inequality
$$ \| f_1^{\beta_1} \cdots f_d^{\beta_d} \|_q \leq  \|f_1 \|_{q_1}^{\beta_1} \cdots\|f_d \|_{q_d}^{\beta_d}$$
for all $ 0 < q_j < \infty$ and $0 <q < \infty$ which satisfy $\sum_{j=1}^d \beta_j q_j^{-1} = q^{-1}$. 
In particular, there is an equivalent formulation of the multilinear H\"older inequality taking the form
\eqref{premainineq} with $\sum_{j=1}^d\beta_j = 1$. In fact there are many such equivalent forms, limited
only by the requirement that $\sum_{j=1}^d \beta_j q_j^{-1} = q^{-1}$.
Special cases of choices of exponents $\{\beta_j, q_j, q\}$
satisfying this condition are (i) $\beta_j$ 
arbitrary subject to $\sum_{j=1}^d \beta_j = 1$, $q_j =1$ for all $j$, and $q =1$; and 
(ii)  $\beta_j = d^{-1}$ for all $j$, $q_j$ arbitrary subject to $\sum_{j=1}^d q_j^{-1} = 1$, and $q = d$. 
This observation demonstrates that we may expect that a given multilinear inequality might have {\em multiple}
equivalent manifestations, each of the form \eqref{premainineq}, with $\sum_{j=1}^d \beta_j = 1$. 
In the context of the factorisation theory we shall develop, each
manifestation of the inequality corresponds to a different factorisation property of
associated function spaces. See Section~\ref{rrr} for further discussion.}
\end{example}

\begin{example}\label{Loomis--Whitney}
{\rm [Loomis--Whitney inequality]
  For $1 \leq j \leq n$ let $\pi_j: \mathbb{R}^n \to \mathbb{R}^{n-1}$ be projection on the coordinate
  hyperplane perpendicular to the standard 
  unit basis vector $e_j$; that is, $\pi_j x = (x_1, \dots, \widehat{x_j}, \dots, x_n)$. The Loomis--Whitney
  inequality \cite{LW} for nonnegative functions is
$$ \int_{\mathbb{R}^n} F_1(\pi_1 x) \cdots F_n(\pi_n x) \, {\rm d} x
\leq \|F_1\|_{L^{n-1}(\mathbb{R}^{n-1})} \cdots \|F_n\|_{L^{n-1}(\mathbb{R}^{n-1})}.$$
For each $0 < p < \infty $, this is equivalent to the inequality
$$ \|f_1(\pi_1 x)^{1/n} \cdots f_n(\pi_n x)^{1/n} \|_{L^{np/(n-1)}(\mathbb{R}^n)} 
\leq \|f_1\|_{L^{p}(\mathbb{R}^{n-1})}^{1/n} \cdots \|f_n\|_{L^{p}(\mathbb{R}^{n-1})}^{1/n}.$$
Each of these inequalities is of the form \eqref{premainineq} with $\sum_{j=1}^n \beta_j = 1$. 

\medskip
\noindent
A very special case of the Loomis--Whitney inequality occurs in two dimensions where it becomes
the trivial identity
$$ \int_{\mathbb{R}^2} F_1(x_2)F_2(x_1) \; {\rm d} x_1  {\rm d} x_2 = \int_\mathbb{R} F_1 \int_\mathbb{R} F_2.$$
In spite of its simplicity, this example will play an important guiding role for us. See
Sections~\ref{factinterp}, \ref{Loomis--Whitneyrevisited}, \ref{NLLW} and \ref{BLfactdetails}.

\medskip
\noindent
The Loomis--Whitney inequality has many variants -- for example Finner's inequalities, the affine-invariant
Loomis--Whitney inequality and the nonlinear Loomis--Whitney inequality. See \cite{Fi}, \cite{BCW}, and
Sections ~\ref{Loomis--Whitneyrevisited} and \ref{NLLW}.
}
\end{example}

\begin{example}\label{BL}
{\rm [Brascamp--Lieb inequalities]
The class of Brascamp--Lieb inequalities includes the previous examples. Let $B_j : 
\mathbb{R}^n \to \mathbb{R}^{n_j}$ be linear surjections, $ 1 \leq j \leq d$. For $0 < p_j < \infty$ and $F_j$
nonnegative we consider the Brascamp--Lieb inequality
\begin{equation}\label{BL1}
\int_{\mathbb{R}^n} \prod_{j=1}^d F_j(B_jx)^{p_j}\; {\rm d} x \leq C \prod_{j=1}^d \left(\int_{\mathbb{R}^{n_j}}
F_j \right)^{p_j}.
\end{equation}
It is not hard to see that in order for this inequality to hold with a finite constant $C$, it is necessary that
$\sum_{j=1}^d p_j n_j = n$. It is known that the constant $C$ is finite if and only if,
in addition to  $\sum_{j=1}^d p_j n_j = n$, it holds that
$$ {\rm dim} \, V \leq \sum_{j=1}^d p_j {\rm dim} B_j V$$
for all $V$ in the lattice of subspaces of $\mathbb{R}^n$ generated by $\{ \ker B_j\}_{j=1}^d$. (See \cite{BCCT1}, 
\cite{BCCT2} and \cite{Vald}.) From this one sees easily that $\cap_{j=1}^d {\rm ker} B_j = \{ 0 \}$, 
$\sum_{j=1}^d p_j \geq 1$, and $p_j \leq 1$  are also necessary conditions
for the finiteness of $C$. A celebrated theorem of Lieb \cite{L} states that the value of the best constant $C$
is obtained by checking the inequality on Gaussian inputs $F_j$. Lieb's theorem generalises Beckner's theorem
\cite{Beckner} on extremisers for Young's convolution inequality.  

\medskip
\noindent
Suppose that $0 < r_j < \infty$ and $0 < s < \infty$.  Setting $F_j = f_j^{r_j}$ in \eqref{BL1} and taking $s$'th roots,
we see that \eqref{BL1} is equivalent to
$$\| \prod_{j=1}^d f_j(B_jx)^{p_jr_j/s} \|_{L^s(\mathbb{R}^n)} 
\leq {C}^{1/s} \prod_{j=1}^d \|f_j \|_{L^{r_j}(\mathbb{R}^{n_j})}^{p_jr_j/s}.$$
If $\sum_{j=1}^d p_j r_j = s$ this is an inequality of the form \eqref{premainineq} with $\sum_{j=1}^d \beta_j = 1$.
In particular we can take 
$r_j = n_j$ and $s =n$ to obtain the equivalent form
$$ \| \prod_{j=1}^d f_j(B_jx)^{p_jn_j/n} \|_{L^n(\mathbb{R}^n)} 
\leq {C}^{1/n} \prod_{j=1}^d \| f_j \|_{L^{n_j}(\mathbb{R}^{n_j})}^{p_jn_j/n};$$
or we can take $r_j = 1$ and $s = \sum_{j=1}^d p_j$ (recall that this number is at least $1$ when the inequality is nontrivial)
to obtain another equivalent form
\begin{equation}\label{BL3}
  \| \prod_{j=1}^d f_j(B_jx)^{p_j/s} \|_{L^s(\mathbb{R}^n)} 
  \leq {C}^{1/s} \prod_{j=1}^d \| f_j \|_{L^{1}(\mathbb{R}^{n_j})}^{p_j/s}.
  \end{equation}

\medskip
\noindent
A special case of the class of Brascamp--Lieb inequalities is the class of {\em geometric} Brascamp--Lieb inequalities.
Suppose that the linear surjections $B_j: \mathbb{R}^n \to \mathbb{R}^{n_j}$ satisfy
$$ \sum_{j=1}^d p_j B_j^\ast B_j = I_n.$$
Then, by a result of Ball and Barthe, (\cite{Ball} and \cite{Barthe}, see also \cite{BCCT1}) we have
\begin{equation}\label{BL2}
\int_{\mathbb{R}^n} \prod_{j=1}^d F_j(B_jx)^{p_j}\; {\rm d} x \leq \prod_{j=1}^d \left(\int_{\mathbb{R}^{n_j}}
F_j \right)^{p_j},
\end{equation}
and the sharp constant $1$ is achieved by the standard Gaussians $F_j(y) = e^{- \pi |y|^2}$. Correspondingly, in the
equivalent variants presented above, the constants are also $1$. The geometric Brascamp--Lieb inequalities include
a suitably reformulated version of the sharp Young inequality of Beckner \cite{Beckner}. 
See Section~\ref{appl1} and Section~\ref{GBL} for an application of the theory we present in the context of geometric Brascamp--Lieb inequalities.
}
\end{example}

\begin{example}\label{mgrt}
{\rm [Multilinear generalised Radon transforms]
  There is a vast literature on multilinear generalised Radon transforms into which we do not wish to enter.
  For us, this term will mean consideration of multilinear inequalities of the form \eqref{premainineq}
  when the operators $T_j$ take the form $T_jf = f \circ B_j$ for suitable mappings
  $B_j: X \to Y_j$. In most cases, $X$ and $Y_j$ will be endowed with a topological or smooth structure, and
  the mappings $B_j$ will respect that structure in such a way that issues of measurability do not arise.

  \medskip
  \noindent
  The class of multilinear generalised Radon transforms includes the Brascamp--Lieb inequalities. The most basic
  multilinear generalised Radon transform which is not included in the Brascamp--Lieb inequalities is probably
  the nonlinear Loomis--Whitney inequality. See Section~\ref{NLLW} below.
  
}
\end{example}

\begin{example}\label{MK}
{\rm [Multilinear Kakeya inequalities]
The Loomis--Whitney inequality of Example 
\ref{Loomis--Whitney} is equivalent to
$$ \int_{\mathbb{R}^n} \prod_{j=1}^n 
\left(\sum_{P _j \in \mathcal{P}_j} a_{P_j} \chi_{P_j}(x) \right)^{1/(n-1)} \; {\rm d}x 
\leq \prod_{j=1}^n \left(\sum_{P_j \in \mathcal{P}_j} a_{P_j}\right)^{1/(n-1)},$$
where $\mathcal{P}_j$ is a finite family of $1$-tubes in $\mathbb{R}^n$ which are 
parallel to the $j$'th standard basis vector $e_j$, and the $a_{P_j}$ are arbitrary positive numbers. 
(A $1$-tube is simply a neighbourhood of a doubly infinite line in $\mathbb{R}^n$ which has $(n-1)$-dimensional cross-sectional area equal to $1$.) Multilinear Kakeya 
inequalities have the same set-up, but now we allow the tubes in the family 
$\mathcal{P}_j$ to be {\em approximately} parallel to $e_j$, i.e. the direction 
$e(P) \in \mathbb{S}^{n-1}$ of the central axis of the tube $P \in \mathcal{P}_j$ must satisfy
$|e(P) - e_j| \leq c_n$ where $c_n$ is a small dimensional constant. Such inequalities have been 
studied in \cite{BCT}, \cite{MR2746348}, \cite{BG} and \cite{CV} and have proved to be very important over the last 
decade with significant applications in partial differential equations and especially in number theory --
see for example \cite{B7}, \cite{B8}, \cite{B9} and \cite{B10}.
The multilinear Kakeya inequality is the statement 
$$ \Big\|\prod_{j=1}^n \left(\sum_{P _j \in \mathcal{P}_j} a_{P_j} \chi_{P_j}(x) \right)^{1/n} \Big\|_{L^{n/(n-1)}(\mathbb{R}^n)} 
\leq C_n \prod_{j=1}^n \left(\sum_{P_j \in \mathcal{P}_j} a_{P_j}\right)^{1/n}.$$
This inequality is of the form \eqref{premainineq} with $X = \mathbb{R}^n$, $q = n/(n-1)$, $Y_j = \mathcal{P}_j$ with
counting measure, $p_j = 1$ for all $j$, $\beta_j = 1/n$ for all $j$, and $T ((a_{P_j}))(x) = \sum_{P _j \in \mathcal{P}_j} a_{P_j} \chi_{P_j}(x)$.
It was Guth's approach to such multilinear Kakeya inequalities in \cite{MR2746348} which inspired the present paper.

\medskip
\noindent
The recent multilinear $k_j$-plane Kakeya inequalities, and indeed the even more general perturbed Brascamp--Lieb
inequalities, both recently established by Zhang \cite{Z}, also fit into our framework, the latter as a generalisation of
inequality \eqref{BL3}.  
}
\end{example}

\bigskip
\noindent
We shall return to consider these examples in some detail later in Part III. In particular we shall discuss the
affine-invariant Loomis--Whitney inequality, the nonlinear Loomis--Whitney inequality and certain aspects of
Brascamp--Lieb inequalities in the light of the theory we develop.

\subsection{The weighted geometric mean operator}\label{wgmo}
\noindent
As we have just seen, all of our examples fit into the framework of inequality \eqref{premainineq} with
$\sum_{j=1}^d \beta_j = 1$, and with the $L^{p_j}$ (and $L^q$) spaces in the Banach regime, i.e. with
$p_j \geq 1$ (and $q \geq 1$). We shall therefore be concerned in this paper with norm
inequalities for the {\em weighted geometric mean} operator
$$ \mathcal{T}_\alpha:(f_1, \dots, f_d) \mapsto (T_1 f_1)^{\alpha_1} \cdots (T_d f_d)^{\alpha_d}$$ 
where $\alpha = (\alpha_1, \dots, \alpha_d)$ and the $\alpha_j$ are positive numbers satisfying
$ \sum_{j=1}^d \alpha_j = 1$. That is, we shall consider inequalities of the form

\begin{equation}
\label{submainineq}
\left\|\prod_{j=1}^d (T_jf_j)^{\alpha_j}\right\|_{L^q(X)}\leq A 
\prod_{j=1}^d \Big\|f_j\Big\|_{L^{p_j}(Y_j)}^{\alpha_j}
\end{equation}
for nonnegative simple functions $f_j \in \mathcal{S}(Y_j)$, in the regime $p_j \geq 1$ and $q > 0$.
While the case $q \geq 1$ is pertinent to our examples, we also wish to consider the case $0 < q < 1$
because this corresponds to the situation treated by Maurey in the linear setting. Throughout the paper,
we use the quantities $\alpha_j$ to represent positive numbers whose sum is $1$.

\medskip
\noindent
We have chosen to present our general theory for the weighted geometric mean operator $\mathcal{T}_\alpha$ --
which is manifestly {\em not} linear in its arguments $f_1, \dots , f_d$ -- mainly because of the
extra elegance and simplicity that such a treatment affords. Nevertheless, nearly all of the examples
above also have equivalent strictly multilinear formulations.
In particular, the multilinear Kakeya inequality of Example~\ref{MK} can be re-cast as the manifestly multilinear
$$ \Big\|\prod_{j=1}^n \left(\sum_{P _j \in \mathcal{P}_j} \beta_{P_j} \chi_{P_j}(x) \right) \Big\|_{L^{1/(n-1)}(\mathbb{R}^n)} 
\leq C \prod_{j=1}^n \left(\sum_{P_j \in \mathcal{P}_j} \beta_{P_j}\right).$$
The one class of examples 
that does not admit a genuinely multilinear reformulation consists of the perturbed Brascamp--Lieb inequalities which were
briefly mentioned in Example~\ref{MK}.

\medskip
\noindent
Our first purpose in this paper is to propose and undertake a systematic study of the duality theory
associated to the weighted geometric mean operator $\mathcal{T}_\alpha$ in the context of inequality
\eqref{submainineq} in the case $q \geq 1$, and some of its generalisations. It is hoped that the
framework for this multilinear duality theory will in time have applications in a wide
variety of contexts.
Our second purpose is to establish suitable analogues of Maurey's theorems in the
context of \eqref{submainineq} in the case $0 < q < 1$. Interestingly, the case $q=1$ will be central
to our development of both the regimes $q \geq 1$ and $0 < q < 1$, unlike in the classical linear
setting where the case $q=1$ is essentially vacuous, and in which there appears to be no direct
link between the two regimes $q \geq 1$ and $0 < q < 1$.

\subsection{A theory of multilinear duality -- the regime $q \geq 1$}\label{mdbanach}

\medskip
\noindent
We begin with the Banach regime $q \geq 1$.

\medskip
\noindent
One half of our duality theory -- the `easy' half -- is largely contained in the following 
simple observation, the content of which is that if we have a certain pointwise 
factorisation property for the space $L^{q^\prime}$, then the weighted geometric mean norm 
inequality \eqref{submainineq} will hold. 

\begin{prop}
\label{thmguthbaby}
Suppose that $T_j : L^{p_j}(Y_j) \to L^q(X)$ are positive linear operators, that $p_j, q \geq 1$
and that $\sum_{j=1}^d \alpha_j = 1$.
Suppose that for every nonnegative $G \in L^{q^\prime}(X)$ there exist nonnegative measurable functions $g_j$
defined on $X$ such that
\begin{equation}
\label{mainprobbaby}
\begin{aligned}
&G(x) \leq \prod_{j=1}^d g_j(x)^{\alpha_j}\qquad\text{a.e. on $X$,}\\
\text{and}\qquad &\Big\|\,T_j^\ast g_j\Big\|_{L^{{p_j}^\prime}(Y_j)} \leq A\Big\|G\Big\|_{L^{q^\prime}(X)}\qquad\text{for all $j$.}
\end{aligned}
\end{equation}
Then, for all nonnegative $f_j \in \mathcal{Y}_j$,
$$ \; \; \; \; \; \qquad\left\|\prod_{j=1}^d (T_jf_j)^{\alpha_j}\right\|_{L^q(X)}
\leq A \prod_{j=1}^d \Big\|f_j\Big\|_{L^{p_j}(Y_j)}^{\alpha_j} \; ;$$
that is, \eqref{submainineq} holds, for all nonnegative $f_j \in L^{p_j}(Y_j)$.
\end{prop}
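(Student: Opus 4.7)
The plan is to exploit the pointwise factorisation hypothesis in tandem with $L^q$ duality (available because $q \geq 1$) and weighted AM--GM. First, by positivity of the $T_j$ we have $|T_j f_j| \leq T_j|f_j|$, so it suffices to treat nonnegative $f_j$. Because $q \geq 1$, I would write
\[
\left\|\prod_{j=1}^d (T_j f_j)^{\alpha_j}\right\|_{L^q(X)} = \sup_{G} \int_X G(x) \prod_{j=1}^d (T_j f_j(x))^{\alpha_j}\,\mathrm{d}\mu(x),
\]
where the supremum runs over nonnegative $G \in L^{q'}(X)$ with $\|G\|_{q'} = 1$.

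Fix such a $G$ and apply the hypothesis to obtain nonnegative measurable $g_1,\dots,g_d$ on $X$ with $G(x)\leq \prod_j g_j(x)^{\alpha_j}$ pointwise and with $\|T_j^\ast g_j\|_{p_j'}\leq A$. Since $\sum_j \alpha_j = 1$, the weighted arithmetic--geometric mean inequality (equivalently, multilinear H\"older with exponents $1/\alpha_j$) gives
\[
\int_X G \prod_{j=1}^d (T_j f_j)^{\alpha_j}\,\mathrm{d}\mu \;\leq\; \int_X \prod_{j=1}^d \bigl(g_j\,T_j f_j\bigr)^{\alpha_j}\,\mathrm{d}\mu \;\leq\; \prod_{j=1}^d \left(\int_X g_j\,T_j f_j \,\mathrm{d}\mu\right)^{\alpha_j}.
\]

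Next, unfold the adjoint pairing $\int_X g_j (T_j f_j)\,\mathrm{d}\mu = \int_{Y_j} f_j\,(T_j^\ast g_j)\,\mathrm{d}\nu_j$, which is legitimate because $T_j^\ast g_j \in L^{p_j'}(Y_j)$ and $f_j \in L^{p_j}(Y_j)$ by hypothesis. Linear H\"older on each $Y_j$ bounds this by $\|f_j\|_{p_j}\|T_j^\ast g_j\|_{p_j'} \leq A\|f_j\|_{p_j}$. Substituting and using $\sum_j \alpha_j = 1$ yields
\[
\int_X G \prod_{j=1}^d (T_j f_j)^{\alpha_j}\,\mathrm{d}\mu \;\leq\; A^{\sum_j \alpha_j}\prod_{j=1}^d \|f_j\|_{p_j}^{\alpha_j} \;=\; A\prod_{j=1}^d \|f_j\|_{p_j}^{\alpha_j},
\]
uniformly in $G$, and the sup over $G$ delivers \eqref{submainineq}.

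The entire argument is essentially a string of duality, AM--GM, and H\"older, so no single step is hard; the only point that requires minor attention is the justification of the adjoint identity $\int g_j (T_j f_j) = \int f_j (T_j^\ast g_j)$. In the paper's setup the operators $T_j$ are defined \emph{a priori} on simple functions with values in measurable functions, so $T_j^\ast$ should be read as whatever linear functional is implicit in the hypothesis $\|T_j^\ast g_j\|_{p_j'} \leq A\|G\|_{q'}$; granting that interpretation, the identity is just the defining property of the adjoint. (One can also argue first for simple $f_j$ and $g_j$ and then extend by a standard monotone/density argument, which is why a reduction to simple functions on the left-hand side is harmless.)
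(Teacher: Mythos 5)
Your argument is exactly the paper's proof of the more general Proposition~\ref{thmguth}: dualize the $L^q$ norm against nonnegative $G\in L^{q'}$, insert the hypothesised subfactorisation $G\leq\prod g_j^{\alpha_j}$, split the integral by H\"older/AM--GM with weights $\alpha_j$, pass to the adjoint pairing, and apply H\"older on each $Y_j$. Your closing remark about reading $T_j^\ast g_j$ in the sense of the linear functional $f_j\mapsto\int g_j T_jf_j$ also matches the paper's own discussion (it formulates the condition as \eqref{mainprobfixed2} when $T_j^\ast$ is not literally defined), so there is no gap.
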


\medskip
\noindent
For the (easy) proof and some discussion of this result, see the more general Proposition~\ref{thmguth} below.

\medskip
\noindent
Rather surprisingly, the implication in Proposition \ref{thmguthbaby} can be essentially reversed, and 
one of the main aims of this paper is to show that the factorisation property \eqref{mainprobbaby} enunciated in  Proposition 
\ref{thmguthbaby} is in fact {\em necessary} as well as sufficient for \eqref{submainineq} to hold. This is 
the second half of the multilinear duality principle referred to in the abstract of the paper.

\medskip
\noindent
Before coming to this, however, we note that if there is a subset of $X$ of positive measure upon which $T_jf_j$ vanishes for
all $f_j \in L^{p_j}$, then this subset will play no role in the analysis of inequality \eqref{submainineq}. There is
therefore no loss of generality in assuming such subsets do not exist. We formalise this notion by introducing
the notion of {\bf saturation} below.\footnote{For a related notion, see \cite{Zaanen}.}
In order to facilitate what follows later, we at the same time introduce
the closely related notion of  {\bf strong saturation}, and also make the definitions in slightly greater generality than what is required
by the current discussion. The definitions apply to linear operators $T: \mathcal{Y} \to \mathcal{M}(X)$,
with $\mathcal{Y}$ a normed lattice and $(X, {\rm d} \mu)$ a measure space,
which are positive in the sense that for every nonnegative $f \in \mathcal{Y}$ we have $Tf \geq 0$.
(What is currently relevant is the fact that the space of simple functions defined on a measure space $Y$,
together with the $L^p$ norm for $p \geq 1$, forms a normed lattice.) 

\begin{definition}\label{saturation}

  \noindent
  (i) We say that $T$ {\bf saturates} $X$ if for each subset
  $E \subseteq X$ of positive measure, there exists a subset 
  $E^\prime  \subseteq E$ with $\mu(E^\prime) > 0$ and a nonnegative $h \in \mathcal{Y}$
  such that $Th > 0$ a.e. on $E^\prime$.

  \smallskip
  \noindent
  (ii) We say that $T$ {\bf strongly saturates} $X$ if  there exists a nonnegative
$h \in \mathcal{Y}$ such that $Th$ is a.e. bounded away from $0$ on $X$.
\end{definition}

For further discussion of the relevance of these conditions, see Remarks~\ref{bnm} and \ref{mnb} below. If $T$ saturates a $\sigma$-finite measure space $X$, then there is an
increasing and exhausting sequence of measurable subsets on each of which $T$ is strongly saturating. For this and more, see Lemma~\ref{sat} below.

\medskip
\noindent
Now we can state one of the main results of the paper:
     
\begin{thm}
\label{thmmainbaby}
Suppose that $X$ and $Y_j$, for $j=1,\dots,d$, are measure spaces. 
Suppose that the linear
operators $T_j: \mathcal{S}(Y_j) \to \mathcal{M}(X)$ are positive
 and that each $T_j$ saturates $X$. Suppose that $p_j \geq 1$ for all $j$, $ 1 \leq q \leq \infty$ and 
$\sum_{j=1}^d \alpha_j = 1$. When $q=1$ suppose additionally that $X$ is $\sigma$-finite. Finally, suppose that
\begin{equation*}
\left\|\prod_{j=1}^d (T_jf_j)^{\alpha_j}\right\|_{L^q(X)}
\leq A \prod_{j=1}^d \Big\|f_j\Big\|_{L^{p_j}(Y_j)}^{\alpha_j}
\end{equation*}
for all nonnegative simple functions $f_j$ on ${Y}_j$, $1 \leq j \leq d$. Then for every nonnegative
 $G \in L^{q^\prime}(X)$ there exist nonnegative measurable functions 
$g_j$ on $X$ such that 
\begin{equation}\label{factorisebaby}
G(x) \leq \prod_{j=1}^d g_j(x)^{\alpha_j}\qquad\mbox{a.e. on $X$,}
\end{equation}
and such that for each $j$, 
\begin{equation}\label{controlbaby}
\int_X g_j(x)T_jf_j(x) {\rm d}\mu(x) \leq A\|G\|_{L^{q^{\prime}}}\|f_j\|_{p_j}
\end{equation}
for all simple functions $f_j$ on ${Y}_j$.
\end{thm}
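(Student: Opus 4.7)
My plan is to establish the factorisation by dualising and running a minimax argument on a bilinear functional whose optimal value is controlled by the hypothesis. First I normalise to $\|G\|_{L^{q'}(X)}=1$ and use $L^q$--$L^{q'}$ duality (valid since $q\geq 1$) to rewrite the hypothesis in the equivalent scalar form
\[
\int_X G(x)\prod_{j=1}^d (T_jf_j)(x)^{\alpha_j}\,d\mu(x)\ \leq\ A\prod_{j=1}^d \|f_j\|_{L^{p_j}(Y_j)}^{\alpha_j}
\]
for all nonnegative simple $f_j$, for the fixed $G$. This step converts the nonlinear dependence on $f_j$ on the left into something amenable to minimax techniques.

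Next I linearise using the pointwise weighted AM--GM identity: for $a_j\geq 0$,
\[
G(x)\prod_j a_j(x)^{\alpha_j}\ =\ \inf\Bigl\{\sum_j \alpha_j g_j(x)a_j(x)\ :\ g_j(x)\geq 0,\ \prod_j g_j(x)^{\alpha_j}\geq G(x)\Bigr\},
\]
the pointwise minimiser being measurable in $x$ by standard selection arguments, which lifts the identity to an integrated form. Taking $a_j=T_jf_j$ and recasting the hypothesis gives
\[
\sup_{f_j\geq 0,\ \|f_j\|_{p_j}\leq 1}\ \inf_{(g_j)\in\mathcal K}\ \sum_j \alpha_j \int_X g_j\,T_jf_j\,d\mu\ \leq\ A,
\]
where $\mathcal K:=\{(g_j): g_j\geq 0\text{ measurable on }X,\ \prod_j g_j^{\alpha_j}\geq G\text{ a.e.}\}$ is convex as a superlevel set of the concave geometric mean.

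Then I apply a Sion-type minimax theorem to the bilinear form on the convex $\mathcal K$ and a weak-$*$ compact $f_j$-ball to interchange sup and inf. The inner sup, since the $f_j$ vary independently and each $T_j$ is positive, evaluates to $\sum_j \alpha_j \|T_j^*g_j\|_{L^{p_j'}(Y_j)}$, so for every $\epsilon>0$ there is $(g_j)\in\mathcal K$ with $\sum_j \alpha_j \|T_j^*g_j\|_{p_j'}\leq A+\epsilon$. A cosmetic rescaling $g_j\mapsto \lambda_j g_j$ with $\prod_j\lambda_j^{\alpha_j}=1$ preserves $\mathcal K$; choosing $\lambda_j:=\bigl(\prod_k\|T_k^*g_k\|_{p_k'}^{\alpha_k}\bigr)\big/\|T_j^*g_j\|_{p_j'}$ equalises the $d$ norms to their common geometric mean, which is bounded by the arithmetic mean $\sum_k\alpha_k\|T_k^*g_k\|_{p_k'}\leq A+\epsilon$. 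Passing $\epsilon\to 0$ via compactness closes the argument.

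The main obstacles are technical: the minimax swap requires a coherent topology on $\mathcal K$ in which the bilinear form and the factorisation constraint both behave well; when some $p_j=1$ the relevant dual is $L^\infty$, whose own dual is too large ($\sigma$-finitely additive versus merely finitely additive measures), so one expects to produce first a finitely additive $g_j$ and then extract its countably additive part via Yosida--Hewitt decomposition -- exactly the machinery the introduction advertises. The saturation hypothesis on $T_j$ must enter to ensure $T_j^*g_j\not\equiv 0$ for the relevant near-minimisers, so that the scaling step has non-zero denominators and the $\epsilon\to 0$ limit is not spoiled by the $(g_j)$ concentrating on sets where $T_j$ is blind. The limit passage itself will require either lower semicontinuity of $(g_j)\mapsto \prod g_j^{\alpha_j}$ in the chosen topology or a diagonal subsequential extraction.
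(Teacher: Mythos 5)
Your high-level strategy is recognisably the paper's: normalise and dualise against $G$, linearise the geometric mean via AM--GM, invoke a minimax theorem, and use Yosida--Hewitt to return from a finitely additive object to an integrable one. But the proposal has several genuine gaps in exactly the places you flag as ``technical obstacles,'' and the remedies you sketch for them do not match how the argument actually goes.

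The central problem is where the compactness for the minimax comes from, and what it delivers. You propose Sion's theorem with compactness on the $f_j$-ball. That is doubly wrong: for $p_j=1$ the unit ball of $L^1(Y_j)$ (and \emph{a fortiori} its simple subfunctions) is not weak-$*$ or weakly compact, and even where it were, compactness on the $f_j$ side only gives $\sup\inf=\inf\sup$ without producing a minimiser in $(g_j)$ --- which is the object you need. You are then forced into near-minimisers and an $\epsilon\to0$ passage with no identified source of compactness in $(g_j)$. The paper instead places the would-be factors in $L^\infty(X)^*_+$ from the outset and uses a \emph{lopsided} minimax theorem (Theorem~\ref{minimax}) whose hypothesis~(iv) asks only for compactness of \emph{sublevel sets} $\{\Phi : L(\Phi,\Psi_0)\le\lambda\}$ for a single $\Psi_0$, not compactness of the whole domain; that is what produces a genuine minimiser with no $\epsilon$-limit to chase.

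This is also where you misidentify the role of the saturation hypothesis. You use it only to keep the rescaling denominators $\|T_j^*g_j\|_{p_j'}$ nonzero. In the paper, saturation is the ingredient that makes condition~(iv) hold: super-strong saturation supplies $h_j$ with $T_jh_j\ge c_0>0$, so $\Lambda((K,S_j),\Psi_0)\ge K/2 + c_0\sum_j S_j(\mathbf 1)$, and Banach--Alaoglu then gives weak-$*$ compactness of the sublevel sets. (The paper explicitly notes this correspondence is ``remarkable.'') Your non-degeneracy observation is true but incidental.

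Relatedly, your ``when some $p_j=1$ the relevant dual is $L^\infty$'' points at the wrong space. The $L^\infty$ whose dual matters is $L^\infty(X)$ --- the ambient space for the factors $g_j$ --- not $L^{p_j'}(Y_j)$, and this arises for \emph{every} choice of $p_j$. It is because $L^1(X)_+$ is not a dual ball that the paper enlarges to $L^\infty(X)^*_+$; then Yosida--Hewitt (via Lemma~\ref{abscont}, which is a multilinear refinement playing the role your pointwise AM--GM identity plays, but phrased so that the constraint $G\le\prod g_j^{\alpha_j}$ becomes a weak-$*$-closed condition) extracts the countably additive part.

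Finally, the theorem is stated for $\sigma$-finite $X$, and the paper's Theorem~\ref{thmsimple} only covers the finite-measure case; Lemmas~\ref{strsat} and~\ref{sat} construct an exhausting sequence $E_n$ of finite measure on each of which one can factorise, and Lemma~\ref{top}/Lemma~\ref{vector} handle the non-metrisable weak-$*$ limit to glue these factorisations together (the unweighted $L^1$ bound you would like does not survive as written, and a weight $w$ appears in Theorem~\ref{thmmain}). Your proposal says nothing about this gluing step, and the $\epsilon\to0$ scheme as written cannot produce it.
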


\begin{remark}{\rm 
Note that we have used the formulation
\eqref{controlbaby} instead of one explicitly involving $T_j^\ast$ as we did in \eqref{mainprobbaby} because it is not
immediately clear how $T_j^*$ should be defined this context.
 }
  \end{remark}

\medskip
\noindent
The special case of Theorem~\ref{thmmainbaby} corresponding to $q=1$ and $G \equiv 1$ can be singled out:

\begin{thm}
\label{cormainbaby}
Suppose that $X$ and $Y_j$, for $j=1,\dots,d$, are  measure spaces, with $X$ being $\sigma$-finite.
Suppose that the operators $T_j: \mathcal{S}(Y_j) \to \mathcal{M}(X)$ are positive
and that each $T_j$ saturates $X$. Suppose that $p_j\geq 1$, $\sum_{j=1}^d \alpha_j = 1$
and that
\begin{equation*}
\int_X \prod_{j=1}^d (T_jf_j)^{\alpha_j} {\rm d} \mu
\leq A \prod_{j=1}^d \Big\|f_j\Big\|_{L^{p_j}(Y_j)}^{\alpha_j}
\end{equation*}
for all nonnegative simple functions $f_j$ on ${Y}_j$, $1 \leq j \leq d$. Then there exist nonnegative
measurable functions $g_j$ on $X$ such that 
\begin{equation*}
1 \leq \prod_{j=1}^d g_j(x)^{\alpha_j}\qquad\mbox{a.e. on $X$,}
\end{equation*}
and such that for each $j$, 
\begin{equation}\label{controlbabybaby}
\int_X g_j(x)T_jf_j(x) {\rm d}\mu(x) \leq A \|f_j\|_{p_j}
\end{equation}
for all simple functions $f_j$ on ${Y}_j$.
\end{thm}

\medskip
\noindent
In fact,  Theorem~\ref{cormainbaby} implies Theorem~\ref{thmmainbaby}. Indeed, suppose that $1 < q \leq \infty$ and that 
\begin{equation*}
\left\|\prod_{j=1}^d (T_jf_j)^{\alpha_j}\right\|_{L^q(X)}
\leq A \prod_{j=1}^d \Big\|f_j\Big\|_{L^{p_j}(Y_j)}^{\alpha_j}
\end{equation*}
for all nonnegative simple functions $f_j$ on ${Y}_j$, $1 \leq j \leq d$. Then, for all nonnegative
$G \in L^{q'}(X)$ with $\|G\|_{L^{q'}}=1$, we have
\begin{equation*}
\int_X \prod_{j=1}^d (T_jf_j)^{\alpha_j} G \, {\rm d} \mu
\leq A \prod_{j=1}^d \Big\|f_j\Big\|_{L^{p_j}(Y_j)}^{\alpha_j}
\end{equation*}
for all nonnegative simple functions $f_j$ on ${Y}_j$, $1 \leq j \leq d$. It is easy to see that
if $T_j$ saturates $X$ with respect to the measure ${\rm d} \mu$, then it also does so with respect to $G \,
{\rm d}\mu$. Now the measure $G \, {\rm d} \mu$ is $\sigma$-finite irrespective of whether ${\rm d}\mu$ is $\sigma$-finite measure. Therefore, by Theorem~\ref{cormainbaby}
applied with the measure $G \, {\rm d} \mu$ in place of ${\rm d} \mu$, there are nonnegative measurable
functions $\gamma_j$ such that 
\begin{equation*}
1 \leq \prod_{j=1}^d \gamma_j(x)^{\alpha_j} \qquad G \, {\rm d} \mu \mbox{-a.e. on $X$,}
\end{equation*}
and such that for each $j$, 
\begin{equation*}
\int_X \gamma_j(x)T_jf_j(x) G(x) {\rm d}\mu(x) \leq A \|f_j\|_{p_j}
\end{equation*}
for all simple functions $f_j$ on ${Y}_j$. Setting $g_j = \gamma_j G$ gives the
desired conclusion of Theorem~\ref{thmmainbaby} when $q > 1$. When $q=1$, factorisation of the function
$1$ as in Theorem~\ref{cormainbaby} immediately yields a corresponding factorisation of each $G \in L^\infty$.

\medskip
\noindent
The results described here will follow from the more general Theorem~\ref{thmmain} below.

\subsubsection{An application to pointwise factorisation}\label{appl1}
As an application of Theorem~\ref{thmmainbaby}, we have the following sample result concerning pointwise factorisation of nonnegative functions in $L^2(\mathbb{R}^2)$:

\begin{thm}
  Let $v_1, v_2$ and $v_3$ be unit vectors in $\mathbb{R}^2$ with angle $2\pi/3$ between each pair. Then, for every nonnegative $G \in L^2(\mathbb{R}^2)$,
    there exist nonnegative locally integrable functions $g_1, g_2$ and $g_3$ such that
     $$ G(x) \leq g_1(x)^{1/3} g_2(x)^{1/3} g_3(x)^{1/3} \; \; \mbox{ a.e.}$$
    and, for each $j$, for almost every line $l$ in $\mathbb{R}^2$ which is
    parallel to $v_j$,
    $$ \int_l g_j {\rm d} \lambda  \leq \|G\|_2$$
    where ${\rm d}\lambda$ denotes Lebesgue measure on $l$.
\end{thm}

For further details, and many more results of this nature, see Section~\ref{GBL}
below.

\subsection{Multilinear Maurey-type factorisation -- the regime $0 < q < 1$.}\label{mmf}
We now state a multilinear Maurey-type theorem. 

\begin{thm}
\label{thmmainmaurey}
Suppose that $X$ and $Y_j$, for $j=1,\dots,d$, are measure spaces and that $X$ is  $\sigma$-finite. Suppose that the 
operators $T_j: \mathcal{S}(Y_j) \to \mathcal{M}(X)$ are positive  
and that each $T_j$ saturates $X$. Suppose that $p_j \geq 1$, $0 < q <1$,
$\sum_{j=1}^d \alpha_j = 1$ and that
\begin{equation}\label{987}
\left\|\prod_{j=1}^d (T_jf_j)^{\alpha_j}\right\|_{L^q(X)}
\leq A \prod_{j=1}^d \Big\|f_j\Big\|_{L^{p_j}(Y_j)}^{\alpha_j}
\end{equation}
for all nonnegative simple functions $f_j \in \mathcal{Y}_j$, $1 \leq j \leq d$. 
Then there exist nonnegative measurable functions 
$g_j$ on $X$ such that 
\begin{equation}\label{factorisebabymaurey}
\|\prod_{j=1}^d g_j(x)^{\alpha_j}\|_{L^{q'}(X)} = 1
\end{equation}
and such that for each $j$, 
\begin{equation}\label{controlbabymaurey}
\int_X g_j(x)T_jf_j(x) {\rm d}\mu(x) \leq A\|f_j\|_{L^{p_j}(Y_j)}
\end{equation}
for all simple functions $f_j$ on ${Y}_j$.
\end{thm}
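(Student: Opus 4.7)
The plan is to reduce the case $0 < q < 1$ to the already-established $q = 1$ case of Corollary~\ref{cormainbaby} by inserting a single auxiliary weight $G$ via reverse H\"older duality. Since $0 < q < 1$, the conjugate exponent $q' = q/(q-1)$ is negative, and for nonnegative $h$ one has $\|h\|_{L^q} = \inf_G \int h\,G\, {\rm d}\mu$ where the infimum is taken over $G > 0$ with $\int G^{q'}\, {\rm d}\mu = 1$. The hypothesis \eqref{987} therefore amounts to saying that, for every tuple of simple nonnegative $f_j$, there exists \emph{some} $G_f$ with $\int G_f^{q'}\, {\rm d}\mu = 1$ for which $\int \prod_j (T_jf_j)^{\alpha_j} G_f\, {\rm d}\mu \le A \prod_j \|f_j\|_{p_j}^{\alpha_j}$. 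The crux will be to replace this $f$-dependent $G_f$ by a single $G$ that works uniformly in $f$; once that is done, Corollary~\ref{cormainbaby} applied to the measure $G\, {\rm d}\mu$ produces the factorisation we want.

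To find the uniform $G$, I would reparametrise by $H := G^{q'}$, turning the feasible set into the convex set $\mathcal{H}$ of nonnegative densities with $\int H\, {\rm d}\mu = 1$, and examine the quantity
\begin{equation*}
\Psi(f, H) := \int_X \prod_{j=1}^d (T_jf_j)^{\alpha_j}\, H^{1/q'}\, {\rm d}\mu.
\end{equation*}
Because the weighted geometric mean $\prod_j X_j^{\alpha_j}$ is concave on $[0,\infty)^d$ and the $T_j$ are linear and positive, $\Psi(\cdot, H)$ is concave in $f$; conversely $t \mapsto t^{1/q'}$ is convex on $(0,\infty)$ (as $1/q' < 0$), making $\Psi(f, \cdot)$ convex in $H$. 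The hypothesis reads $\sup_f \inf_H \Psi(f, H) \le A \prod_j \|f_j\|_{p_j}^{\alpha_j}$, so a Sion-type minimax theorem (equivalently, Ky Fan's intersection lemma applied to the closed convex sets $\{H : \Psi(f, H) \le A \prod_j \|f_j\|_{p_j}^{\alpha_j}\}$) should yield a single $G \ge 0$ with $\int G^{q'}\, {\rm d}\mu \le 1$ for which $\int_X \prod_j (T_jf_j)^{\alpha_j}\, G\, {\rm d}\mu \le A \prod_j \|f_j\|_{p_j}^{\alpha_j}$ holds for every tuple of simple nonnegative $f_j$. Since $\int G^{q'}\, {\rm d}\mu \le 1$ forces $G > 0$ a.e., saturation carries over from ${\rm d}\mu$ to $G\, {\rm d}\mu$, and Corollary~\ref{cormainbaby} then produces nonnegative measurable $\gamma_j$ with $1 \le \prod_j \gamma_j^{\alpha_j}$ a.e.\ and $\int \gamma_j T_j f_j\, G\, {\rm d}\mu \le A\|f_j\|_{p_j}$. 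Setting $g_j := \gamma_j G$ yields $\int g_j T_j f_j\, {\rm d}\mu \le A\|f_j\|_{p_j}$ and $\prod_j g_j^{\alpha_j} \ge G$ a.e., whence $\int (\prod_j g_j^{\alpha_j})^{q'}\, {\rm d}\mu \le \int G^{q'}\, {\rm d}\mu \le 1$. A final rescaling $g_j \mapsto \lambda g_j$ with $\lambda \le 1$ chosen so that $\int (\prod_j (\lambda g_j)^{\alpha_j})^{q'}\, {\rm d}\mu = 1$ enforces \eqref{factorisebabymaurey} without violating \eqref{controlbabymaurey}.

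The principal obstacle is the uniformisation step. The set $\mathcal{H}$ of probability densities is bounded in $L^1$ but is neither norm-compact nor weakly sequentially compact without additional uniform integrability. The natural remedy is to compactify $\mathcal{H}$ in the weak-$\ast$ topology on $(L^\infty)^*$, which brings in finitely additive measures and forces a careful reinterpretation of the nonlinear expression $H^{1/q'}$ on that enlarged space. This is precisely the Yosida--Hewitt machinery flagged in the abstract, and will presumably be handled by invoking the more general form of the $q = 1$ theorem (Theorem~\ref{thmmain}) to which the excerpt alludes.
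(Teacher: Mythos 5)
You correctly identify the heart of the matter: replacing the tuple-dependent weight $G_f$ arising from the reverse H\"older identity by a single $G$ valid for all $f$. But this uniformisation is not a step one can settle with a soft appeal to minimax theory and then hand off. The feasible set $\mathcal{H}$ of probability densities is bounded in $L^1$ but not weak-$*$ compact there; passing to $(L^\infty)^*$ restores compactness but destroys the pointwise meaning of the nonlinear map $H\mapsto H^{1/q'}$; and ``invoking Theorem~\ref{thmmain}'' does not by itself produce the uniform weight, since that theorem concerns only $\mathcal{X}$-spaces such as $L^q$ with $q\ge1$ and says nothing directly about the $L^q$, $q<1$, hypothesis. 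As written, the central analytic step of your argument is missing, and there is also a secondary issue that the function $\Phi(f,H):=\Psi(f,H)-A\prod_j\|f_j\|_{p_j}^{\alpha_j}$ that you would feed into a minimax theorem is not concave in $f$ because of the subtracted geometric mean of norms.

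The paper avoids both difficulties with a formal device that hides the auxiliary weight inside the $q=1$ machinery rather than constructing it separately. Raising \eqref{987} to the $q$-th power converts the left-hand side into $\int_X\prod_{j=1}^d(T_jf_j)^{\beta_j}\,{\rm d}\mu$ with $\beta_j:=\alpha_jq$ and $\sum_{j=1}^d\beta_j=q<1$. One then introduces a trivial $(d+1)$-th player: $\mathcal{Y}_{d+1}=\mathbb{R}$, the one-dimensional normed lattice, $T_{d+1}\lambda:=\lambda\mathbf{1}$, and $\beta_{d+1}:=1-q$, so that $\sum_{j=1}^{d+1}\beta_j=1$. The hypothesis is unchanged because $T_{d+1}$ contributes the factor $|\lambda|^{1-q}$ on both sides. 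Applying Theorem~\ref{thmmain} with $\mathcal{X}=L^1$ and $G\equiv1$ (and Remark~\ref{smaller} for equality in the factorisation) produces $G_1,\dots,G_{d+1}$ with $\prod_{j=1}^{d+1}G_j^{\beta_j}=1$ a.e.\ and $\int G_jT_jf_j\,{\rm d}\mu\le A^q\|f_j\|$; for $j=d+1$ the latter reads $\int G_{d+1}\,{\rm d}\mu\le A^q$. Setting $g_j:=A^{1-q}G_j$ then gives \eqref{controlbabymaurey}, while $\prod_{j=1}^dg_j^{\alpha_j}=A^{1-q}G_{d+1}^{(q-1)/q}$ combined with $\int G_{d+1}\le A^q$ gives \eqref{factorisebabymaurey} after the rescaling you describe. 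Your sought-after weight $G$ is precisely $G_{d+1}$, produced automatically by the factorisation rather than found in a preliminary minimax step; the trivial $(d+1)$-th operator is the missing idea.
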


We shall give the proof of this result, as a consequence of Theorem~\ref{cormainbaby}, in Section~\ref{maureysec} below.
Conversely, it is easy to see using H\"older's inequality that if there exist $g_j$ such that
\eqref{factorisebabymaurey} and \eqref{controlbabymaurey} hold, then so does
\eqref{987}.

\medskip
\noindent
This result can be seen as a factorisation result in the spirit of Maurey: if we let $S_j f_j(x) = g_j(x) T_j f_j(x)$,
$\mathcal{S}_\alpha(f_1, \dots , f_d) = \prod_{j=1}^d(S_j f_j)^{\alpha_j}$ and
$g(x) = \prod_{j=1}^d g_j(x)^{\alpha_j}$, then
$$ \mathcal{T}_\alpha  = M_{g^{-1}} \circ \mathcal{S}_\alpha$$
where
$$ \|S_j\|_{L^{p_j} \to L^1} \leq A $$
for all $j$, and
$$  \|M_{g^{-1}}\|_{L^1 \to L^q} = \|g\|_{q'}^{-1} = 1.$$
In fact it has the rather strong conclusion that {\em each} $S_j$ is bounded from $L^{p_j}(Y_j)$ to $L^1( {\rm d} \mu)$ with
constant at most $A$ (rather than the much weaker corresponding statement for the geometric mean $\mathcal{S}_\alpha$ alone). 

\medskip
\noindent
Other, different, versions of multilinear Maurey-type theorems have been studied. See for example
\cite{Schep} and \cite{Diestel}.

\begin{remark}\label{extn}
One may use the classical linear Maurey--Nikisin--Stein factorisation theory of
  positive operators (\cite{Mau}, Proposition 9) to upgrade conclusions
  \eqref{controlbaby} of Theorem~\ref{thmmainbaby}, \eqref{controlbabybaby} of Theorem~\ref{cormainbaby} and \eqref{controlbabymaurey} of
  Theorem~\ref{thmmainmaurey}. Indeed, each of these conclusions states that $T_j$ maps into a weighted
  $L^1$-space, and we can upgrade each to boundedness of $T_j$ into a suitable weighted
  $L^{p_j}$-space. For example, in the context of Theorem~\ref{cormainbaby}, we may conclude that there exist
  nonnegative measurable functions $\phi_j$ on $X$ such that
  $$\int_X \left(\prod_{j=1}^d\phi_j(x)^{{\alpha_j/p_j}}\right)^{-1/{(\sum_{j=1}^d {\alpha_j/{p_j}^\prime})}} {\rm d} \mu (x) \leq 1$$
  and
  $$ \left(\int_X |T_j f_j|^{p_j} \phi_j {\rm d} \mu\right)^{1/p_j} \leq A \|f_j\|_{p_j}$$
  for all simple $f_j$ on $Y_j$. See also Remark~\ref{paty} below. For stronger statements of this kind
  see the forthcoming \cite{CHV}.
  \end{remark}

\subsection{Structure of the paper}
The paper is divided into three parts.

\medskip
\noindent
In Part I (Sections 2--5) we present the theory of multilinear duality and factorisation and prove the main theorems.

\medskip
\noindent
In Section~\ref{sdmr} we state and discuss the main results at some length. The principal result is Theorem~\ref{thmmain}.
Taken together with Proposition~\ref{thmguth}, Theorem~\ref{thmmain} forms the statement of the multilinear duality
principle referred to in the abstract of the paper. (Theorem~\ref{thmmainbaby} and Proposition~\ref{thmguthbaby}
presented in this introduction
are more readily digested versions of Theorem~\ref{thmmain} and Proposition~\ref{thmguth} respectively.) The multilinear Maurey
factorisation theorem, Theorem~\ref{thmmainmaureyx}, is proved as a consequence of Theorem~\ref{thmmain}. (A more
digestible version of Theorem~\ref{thmmainmaureyx} is found as Theorem~\ref{thmmainmaurey} in this introduction.)

\medskip
\noindent
In Section~\ref{outline} we give a proof of a finitistic case of Theorem~\ref{thmmain} which recognises and emphasises
its structure as a convex optimisation or minimax problem. This perspective sets the scene for the remainder of the theoretical
part of the paper. In this case, none of the functional-analytic and measure-theoretic difficulties that we encounter
later are present. However, Section~\ref{outline} is not strictly speaking logically necessary for the development of the theory.

\medskip
\noindent
In Section~\ref{overv} we begin to address the proof of Theorem~\ref{thmmain}. Our strategy will be to first consider
the setting of finite measure spaces. Theorem~\ref{thmsimple} gives the main result in this case, and it
represents a crucial step in the proof of Theorem~\ref{thmmain}.
Already in Theorem~\ref{thmsimple} we are faced with substantial functional-analytic and measure-theoretic difficulties.
These derive from the need to establish certain compactness statements necessary for the application of a minimax theorem.
Briefly, they involve working with the dual space of $L^\infty(X)$, and dealing with various issues in the theory of
finitely additive measures.

\medskip
\noindent
In Section~\ref{dett} we give the details of the proofs of Theorem~\ref{thmsimple} and Theorem~\ref{thmmain}. We begin with
a couple of technical but very important lemmas. Next, we pass to the proof of the finite-measure result, Theorem~\ref{thmsimple},
via the minimax theory. Finally, for general $\sigma$-finite $X$, we ``glue together'' factorisations obtained for subsets $X$ of finite measure
via Theorem~\ref{thmsimple}, and we obtain the factorisations needed for Theorem~\ref{thmmain}.

\medskip
\noindent
In a much shorter Part II, we begin to explore connections with other topics -- in particular the theory of interpolation in
Sections~\ref{factinterp} and \ref{qzf}, and the extent to which the theory might apply in the context of more general multilinear
operators in Section~\ref{culture}.

\medskip
\noindent
Finally, in Part III, we revisit the examples discussed earlier in this introduction in the light of the multilinear duality theory which has
been developed. In Section~\ref{classical} we give factorisation-based proofs of the affine-invariant Loomis--Whitney inequality
(see Section~\ref{Loomis--Whitneyrevisited}) and the sharp nonlinear Loomis--Whitney inequality (see Section~\ref{NLLW}).
In Section~\ref{GBL} we pose an interesting question related to
the sharp Young convolution inequality and geometric Brascamp--Lieb inequalities, while in Section~\ref{GGBL} we describe an algorithm for
factorising the general Brascamp--Lieb inequality. In Section~\ref{MKrevisited} we revisit the multilinear Kakeya inequality which
inspired the paper in the light of the findings of Section~\ref{culture}, and make an observation about the size of the constant in the finite-field version 
of the multilinear Kakeya inequality which is derived from our methods.

\subsection{Future work}\label{future}
In a sequel \cite{CHV} to this paper, we broaden the scope of the multilinear duality theory from positive to potentially oscillatory multilinear inequalities. In particular, we extend the multilinear duality and factorisation theorem
(Theorem~\ref{thmmain}) to non-positive operators, and indeed refine it when the normed spaces have
certain additional geometric properties ($p$-convexity in case of positive operators, Rademacher type in
case of non-positive operators). Moreover, in forthcoming work \cite{CHV1}, we will give an alternative proof of
Theorem~\ref{thmmain} which bypasses the need to consider $(L^\infty)^\ast$.

\subsection{Acknowledgements} This paper has benefited substiantally from discussions with many individuals. In particular
we should like to thank Keith Ball, Jon Bennett, Michael Christ, Michael Cowling, Alastair Gillespie, Gilles Pisier, 
Sandra Pott, Stuart White and Jim Wright for the insights they have shared with us on various aspects of the material
of the paper.
The first author is especially grateful to Michael Christ for his ongoing encouragement in the quest to find explicit factorisations. The second author is grateful to Igor Verbitsky for introducing him to the Maurey theory
of factorisation in their collaboration  \cite{HV} on characterising two-weight norm inequalities via
factorisation. The first and third author would like to record their
appreciation of the hospitality and support of the Isaac Newton Institute during the
programme ``Discrete Analysis'' between March and July 2011, where the preliminary stages of this research
were carried out and presented. The second author is supported by the Academy of Finland through
funding of his postdoctoral researcher post (Funding Decision No 297929), and he is a member of the
Finnish Centre of Excellence in Analysis and Dynamics Research.

\part{Statements and proofs of the theorems}

\bigskip
\section{Statement and discussion of the main results}\label{sdmr}

It turns out that the theory we shall develop is most naturally presented in a more general setting. Moreover,
limiting ourselves to the Lebesgue spaces $L^{p_j}$ and $L^q$ in the multilinear duality theory is
unnecessarily restrictive.
For example, one may wish to consider multilinear inequalities of the form~\eqref{submainineq} in which the $L^{p_j}$ and $L^q$ spaces
are replaced by certain Lorentz spaces, Orlicz spaces or mixed-norm spaces, especially if the inequality under consideration is
an endpoint inequality.
We therefore introduce a more general framework in which we consider suitable spaces $\mathcal{X}$ and $\mathcal{Y}_j$ corresponding to
$L^q(X)$ and $L^{p_j}(Y_j)$ respectively.
Thus, for $\sum_{j=1}^d \alpha_j = 1$, which we recall is a standing convention, we now consider inequalities of the form

\begin{equation}
\label{mainineq}
\left\|\prod_{j=1}^d (T_jf_j)^{\alpha_j}\right\|_{\mathcal{X}} \leq A
\prod_{j=1}^d \Big\|f_j\Big\|_{\mathcal{Y}_j}^{\alpha_j}.
\end{equation}

\medskip
\noindent
Each $\mathcal{Y}_j$ will be an (abstract) normed lattice -- such as $L^1$ or $L^{p_j}$ for $p_j \geq 1$. 
On the other hand, we will take $\mathcal{X}$ to be Banach space of locally integrable\footnote{That is,
$\int_E |f| {\rm d} \mu < \infty$ whenever $\mu(E) < \infty$.} functions defined on $X$,
which contains the simple functions, and is such that if $f \in \mathcal{M}(X)$ and
$g \in \mathcal{X}$ satisfy $|f(x)| \leq |g(x)|$
a.e., then $f \in \mathcal{X}$ and $\|f\|_{\mathcal{X}} \leq \|g\|_{\mathcal{X}}$.
We may as well also assume that $X$ is a complete measure space. Our measure space
$X$ will also be assumed to the $\sigma$-finite; these properties together then
identify $\mathcal{X}$ as a {\bf K\"othe space}, see \cite{LT}, Vol.~II, p.~28.
{\em We shall from now on assume that  $\mathcal{X}$ is a  K\"othe space without
  further mention.}
Natural examples of K\"othe spaces include $L^q$ for $1 \leq q \leq \infty$.
We shall need a suitable primordial dual of $\mathcal{X}$, denoted by $\mathcal{X}'$, and defined to be
$$\mathcal{X}^\prime = \{ g \in \mathcal{M}(X) \, : \, \|g\|_{\mathcal{X}'} = \sup_{ \|f\|_\mathcal{X} \leq 1} \int |fg|\, {\rm d} \mu < \infty \}.$$
The space $\mathcal{X}'$ is usually called the {\bf K\"{o}the dual} of $\mathcal{X}$.
If $\mathcal{X} = L^q$ for $1 \leq q \leq \infty$, then $\mathcal{X}^\prime = L^{q'}$ where
$1/q + 1/ q^\prime = 1$. It is clear that $\mathcal{X}'$ is a linear space which contains
the simple functions (as $\mathcal{X}$ is contained in the class of locally integrable functions)
and is contained in the class of locally integrable functions (as $\mathcal{X}$ contains the simple functions). The
quantity $ \|g\|_{\mathcal{X}^\prime}$ defines a norm on $\mathcal{X}'$.
While by definition we always have the H\"older inequality
$$ |\int fg \, {\rm d} \mu| \leq \|f\|_{\mathcal{X}} \|g \|_{\mathcal{X'}},$$
it may or may not be the case that $\mathcal{X}'$ is {\bf norming} (for $\mathcal{X}$), i.e. that
\begin{equation}\label{norming}
\|f\|_{\mathcal{X}} = \sup \{\, | \int fg \, {\rm d} \mu |\, : \, \|g\|_{\mathcal{X}^\prime} \leq 1 \}
\end{equation}
holds for all $f \in \mathcal{X}$.\footnote{By a result of Lorentz and Luxemburg (see \cite{LT},
Vol.~II, p.~29), if $X$ is a K\"othe space, $\mathcal{X}'$ is norming if and
only if $\mathcal{X}$ has the so-called Fatou property, that is, whenever
$f_n \in \mathcal{X}$ are such that $f_n \to f$ a.e., with $f_{n+1} \geq f_n \geq 0$, 
then $\|f_n \|_\mathcal{X} \to \|f\|_\mathcal{X}$. This is 
  automatic when $\mathcal{X}$ is separable. If $\mathcal{X}$ is $L^\infty$
  then \eqref{norming} holds by inspection since $\mathcal{X}^\prime$ is simply $L^1$. We shall
  need the notion of norming only for Proposition~\ref{thmguth}.} The K\"othe dual $\mathcal{X}'$ is always
isometrically embedded in the norm-dual $\mathcal{X}^\ast$,
but the two spaces may not coincide in general.

\medskip
    {\em From now on, we shall adopt once and for all the convention that all named functions
      ($f, g, h, F, G$, $H, \beta, G, S, \psi$ etc., often 
      adorned with subscripts) are assumed to be nonnegative. The two exceptions to this are the
      functions $L$ and $\Lambda$ appearing in the proofs of the main results.}

\subsection{Duality theory -- easy half}\label{deh}
The easy half of our duality theory is expressed in the following 
simple observation, the content of which is that if we have a certain 
factorisation property for the K\"othe dual $\mathcal{X}^\prime$, then the weighted geometric mean norm 
inequality \eqref{mainineq} will hold. 
\begin{prop}
\label{thmguth}
Suppose that $\mathcal{X}$ is a K\"othe space whose K\"othe dual $\mathcal{X}'$ is norming, and that
$\mathcal{Y}_j$ are normed lattices. Suppose that $T_j : \mathcal{Y}_j \to \mathcal{X}$ are positive linear operators. Suppose furthermore that for every
nonnegative $G \in \mathcal{X}^\prime$ there exist nonnegative measurable functions $g_j$ on $X$ such that
\begin{equation}
\label{mainprob}
\begin{aligned}
&G(x) \leq \prod_{j=1}^d g_j(x)^{\alpha_j}\qquad\text{a.e. on $X$,}\\
\text{and}\qquad &\Big\|\,T_j^\ast g_j\Big\|_{\mathcal{Y}_j^\ast} \leq A\Big\|G\Big\|_{\mathcal{X}^\prime}\qquad\text{for all $j$.}
\end{aligned}
\end{equation}
Then, for all nonnegative $f_j \in \mathcal{Y}_j$
$$ \; \; \; \; \; \qquad\left\|\prod_{j=1}^d (T_jf_j)^{\alpha_j}\right\|_{\mathcal{X}}
\leq A \prod_{j=1}^d \Big\|f_j\Big\|_{\mathcal{Y}_j}^{\alpha_j} \; .$$
That is, \eqref{mainineq} holds, for all nonnegative $f_j \in \mathcal{Y}_j$.

\end{prop}
\begin{proof}
Take $f_j\in\mathcal{Y}_j$ for $j=1,\dots,d$, and $G\in\mathcal{X}^\prime$ with $\|G\|_{\mathcal{X}^\prime}\leq1$. Then
\begin{align*}
    \int_X G(x) \prod_{j=1}^d (T_jf_j)^{\alpha_j} {\rm d} \mu(x) & 
    \leq\int_X \prod_{j=1}^dg_j(x)^{\alpha_j}\prod_{j=1}^d T_jf_j(x)^{\alpha_j} {\rm d}\mu(x)
    =\int_X \prod_{j=1}^d(g_j(x) T_jf_j(x))^{\alpha_j} {\rm d} \mu(x)\\
    &
    \leq\prod_{j=1}^d\left(\int_X g_j(x) T_jf_j(x) {\rm d}\mu(x)\right)^{\alpha_j} 
    = \prod_{j=1}^d \,\left((T_j^\ast g_j)(f_j)\right)^{\alpha_j}\\
    & 
    \leq \prod_{j=1}^d \left(\|T_j^\ast g_j\|_{\mathcal{Y}_j^\ast} \|f_j\|_{\mathcal{Y}_j}\right)^{\alpha_j}
    \leq \prod_{j=1}^d\left(A\|G\|_{\mathcal{X}^\prime}\|f_j\|_{\mathcal{Y}_j}\right)^{\alpha_j}
    \leq A\prod_{j=1}^d\|f_j\|_{\mathcal{Y}_j}^{\alpha_j}
\end{align*}
where the inequalities follow in order from the first condition of \eqref{mainprob}, H\"older's inequality,
the second condition of \eqref{mainprob}, and the assumption that $\|G\|_{\mathcal{X}^\prime}\leq1$.
The proposition now follows by taking the supremum over all such $G$, using the fact that $\mathcal{X}'$ is norming for $\mathcal{X}$.
\end{proof}

\begin{remark}
  If the spaces $\mathcal{Y}_j$ are complete, the assumption that $T_j: \mathcal{Y}_j \to \mathcal{X}$ is positive
  automatically implies that $T_j$ is bounded,\footnote{Indeed, if not, we can find nonnegative $f_n$ with
    $\|f_n\| \leq 2^{-n}$ but $\|T_j f_n \| \geq 2^n$. So for each $n$, $2^n \leq \|T_j f_n \| \leq \|T_j (\sum_{n=1}^\infty f_n)\| \leq C$
    for some finite $C$ 
    since $\sum_{n=1}^\infty f_n \in \mathcal{Y}_j$ (because $\mathcal{Y}_j$ is a Banach space). This is a contradiction.} 
and so the adjoint operator $T_j^\ast$ is well-defined. If not, we interpret the second condition of \eqref{mainprob} as
\begin{equation}
\label{mainprobfixed2}
\int_X g_j(x)T_jf_j(x) {\rm d}\mu(x) \leq A\|G\|_{\mathcal{X}^\prime}\|f_j\|_{\mathcal{Y}_j}
\end{equation}
for $f_j \in \mathcal{Y}_j$ and $j=1,\dots,d$, and we can still conclude the validity of \eqref{mainineq} for functions $f_j \in \mathcal{Y}_j$, as the proof clearly demonstrates. 
\end{remark}

\begin{remark}
If the $T_j$ are known to be bounded, it is immediate that \eqref{mainineq} holds with $A$ replaced by
$\prod_{j=1}^d \|T_j\|^{\alpha_j}$.\footnote{This follows since $\| \prod_{j=1}^d h_j^{\alpha_j} \| \leq \prod_{j=1}^d \| h_j\|^{\alpha_j}$ 
which in turn follows from the case where each $\|h_j\| = 1$, which itself follows by Young's numerical inequality and the 
triangle inequality.} 
However, the best constant $A$ in \eqref{mainprob} will in general be much smaller, and this assertion is 
the main content of Proposition~\ref{thmguth}.
\end{remark}

\begin{remark}
Observe that Proposition \ref{thmguth} does not require any topological structure of
the space $X$, only its nature as a measure space.
\end{remark}

\begin{remark}\label{bnm}
Notice that in order for the proof to go through, we only require that the factorisation property -- i.e. the first condition 
of \eqref{mainprob} -- holds for those $x$ which contribute to $\int_X G(x) \prod_{j=1}^d (T_jf_j)^{\alpha_j} {\rm d} \mu(x)$
for some functions $f_j \in \mathcal{Y}_j$. In other words, if a set $E \subseteq X$ with $\mu(E) >0$ has the property that 
for all choices $f_j$ of nonnegative functions in $\mathcal{Y}_j$, $\prod_{j=1}^d (T_jf_j)(x)^{\alpha_j} = 0$ a.e. 
on $E$, then $E$ will play no role in the analysis. There is therefore no loss of generality in assuming such sets 
do not exist. So we may assume without loss of generality that for all $E \subseteq X$ with $\mu(E) > 0$, there 
exist nonnegative $f_j \in \mathcal{Y}_j$ such that ``$\prod_{j=1}^d T_j f_j(x)^\alpha_j = 0$ a.e. on $E$'' fails -- 
i.e. such that there exists $E' \subseteq E$, with $\mu(E') > 0$ such that $\prod_{j=1}^d T_j f_j(x)^\alpha_j > 0$ on $E'$.
That is, we may assume that for all $E \subseteq X$ with $\mu(E) > 0$, there exists $E' \subseteq E$ with $\mu(E') > 0$,
and, for each $j$, a nonnegative $f_j \in \mathcal{Y}_j$ such that for all $x \in E'$, $T_jf_j(x) >0$. This condition is
equivalent to the formally slightly weaker condition that for each $j$, for all $E \subseteq X$ with $\mu(E) > 0$,
there exists $E' \subseteq E$ with $\mu(E') > 0$ and nonnegative $f_j \in \mathcal{Y}_j$ such that for all
$x \in E'$, $T_jf_j(x) >0$.\footnote{If the latter condition holds, apply it to each $j$ in turn to obtain the
  former condition.}
But this is simply the statement that each $T_j$ saturates $X$, as in Definition~\ref{saturation}.
It is unsurprising that we will require saturation when it comes to formulating and proving
the converse statement.

\end{remark}

\begin{remark}\label{weaksuff}
Note that in place of 
$$\qquad \Big\|\,T_j^\ast g_j\Big\|_{\mathcal{Y}_j^\ast} \leq A\Big\|G\Big\|_{\mathcal{X}^\prime}\qquad\text{for all $j$.}$$
we could have assumed the (formally weaker) condition
$$\qquad \prod_{j=1}^d \Big\|\,T_j^\ast g_j\Big\|_{\mathcal{Y}_j^\ast}^{\alpha_j} \leq A\Big\|G\Big\|_{\mathcal{X}^\prime}.$$
(A homogeneity argument shows that the two conditions are indeed equivalent.)
\end{remark}

\begin{remark}
  Similarly, it suffices to suppose a formally weaker hypothesis (``weak factorisation''), namely that for every
  $G \in \mathcal{X}^\prime$, there exist measurable functions $g_{jk}$ on $X$ such that
  $$G(x) \leq \sum_k \prod_{j=1}^d g_{jk}(x)^{\alpha_j}$$
  a.e. on $X$, and
  $$ \sum_k \Big\|\,T_j^\ast g_{jk}\Big\|_{\mathcal{Y}_j^\ast} \leq A\Big\|G\Big\|_{\mathcal{X}^\prime}$$
  for all $j$. But if this holds, and if we define $g_j = \sum_k g_{jk}$, Minkowski's inequality and H\"older's inequality
  yield \eqref{mainprob}. So this observation does not represent a genuine broadening
  of the scope of Proposition~\ref{thmguth}. 
  \end{remark}

\begin{remark}
The argument for Proposition \ref{thmguth} was effectively given by Guth, in a less abstract form, in his proof the 
endpoint multilinear Kakeya inequality \cite{MR2746348}. However, any strategy which includes an application Proposition 
\ref{thmguth} to establish an inequality of the form \eqref{mainineq} involves the potentially difficult matter of 
first finding a suitable factorisation. Indeed, the main work of \cite{MR2746348} consisted precisely in finding 
such. In this context see also \cite{CV}. 
\end{remark}

\subsection{Duality theory -- difficult half}\label{ddh}
As suggested above, the implication in Proposition \ref{thmguth} can be essentially reversed, and 
a principal aim of this paper is to show that the factorisation property \eqref{mainprob} enunciated
in Proposition 
\ref{thmguth} is in fact {\em necessary} as well as sufficient for \eqref{mainineq} to hold under very mild hypotheses. 
More precisely we prove:

\begin{thm}[Multilinear duality and factorisation theorem]
\label{thmmain}
Suppose that $(X, {\rm d} \mu)$ is a $\sigma$-finite measure space, $\mathcal{X}$ is a K\"othe space of measurable functions on $X$, $\mathcal{Y}_j$ are normed lattices, and
$T_j : \mathcal{Y}_j \to \mathcal{M}(X)$ are positive linear maps.
Suppose that each $T_j$ saturates $X$. Suppose that 
\begin{equation*}
\left\|\prod_{j=1}^d (T_jf_j)^{\alpha_j}\right\|_{\mathcal{X}}\leq A 
\prod_{j=1}^d \Big\|f_j\Big\|_{\mathcal{Y}_j}^{\alpha_j}
\end{equation*}
for all nonnegative $f_j \in \mathcal{Y}_j$, $1 \leq j \leq d$. Then there exists a
weight function\footnote{i.e. a measurable function $w$ with $w(x) > 0$ a.e.} $w$ on $X$ such that
for every nonnegative $G\in\mathcal{X}^\prime$, there exist nonnegative measurable functions $g_j \in L^1(X, w {\rm d} \mu)$ such that 
\begin{equation}\label{factorise}
G(x) \leq \prod_{j=1}^d g_j(x)^{\alpha_j}\qquad\mbox{a.e. on $X$,}
\end{equation}
and such that for each $j$, 
\begin{equation}\label{control}
\int_X g_j(x)T_jf_j(x) {\rm d}\mu(x) \leq A\|G\|_{\mathcal{X}^\prime}\|f_j\|_{\mathcal{Y}_j}
\end{equation}
for all $f_j \in \mathcal{Y}_j$.
\end{thm}

\begin{remark}\label{weightrem}\label{mnb}
  The hypothesis that each $T_j$ saturates $X$ is very natural as pointed out in Remark \ref{bnm} above. Indeed, for the
  reasons set out there, without this hypothesis 
  we cannot expect the conclusion to hold. Needless to say, it will play an important role in the proof of Theorem~\ref{thmmain}. In particular, the weight function $w$ arises as a consequence of the saturation hypothesis. For
  its construction, see Section~\ref{tsjkr} below.  
  If $\mu(X)$ is finite and the $T_j$ strongly saturate $X$, we can take $w$ to be
  the constant function $1$, see Theorem~\ref{thmsimple} below. 
\end{remark}

\begin{remark}
  In the case $d=1$ the factorisation is trivial, and \eqref{control} is simply the usual duality relation corresponding to \eqref{lin2}.
  \end{remark}

\begin{remark}\label{smaller}
  If there exist $g_j$ satisfying \eqref{factorise} and \eqref{control}, then by making one of the $g_j$ smaller if necessary,
  we can find $g_j$ satisfying \eqref{factorise} with {\em equality} in addition to \eqref{control}.
\end{remark}

\begin{remark}
  We emphasise that the constant $A$ appearing in \eqref{control} is {\em precisely} the constant $A$ occuring
  in the hypothesis.
\end{remark}

  \begin{remark}\label{speccaserem}
    As in the case of Theorem~\ref{thmmainbaby}, the general case of Theorem~\ref{thmmain} follows from the special case in which $\mathcal{X} = L^1(X)$. Indeed, placing ourselves under the assumptions of the general case, let $G \in \mathcal{X}'$ have norm $1$, and observe that by H\"older's
    inequality we have
\begin{equation*}
\int_X\prod_{j=1}^d (T_jf_j)^{\alpha_j}G(x) {\rm d} \mu(x)\leq A 
\prod_{j=1}^d \Big\|f_j\Big\|_{\mathcal{Y}_j}^{\alpha_j}
\end{equation*}
for all $f_j \in \mathcal{Y}_j$, $1 \leq j \leq d$. This is the main hypothesis
of the special case, but with respect to the measure $G {\rm d} \mu$ instead of
${\rm d} \mu$. It is easily verified that  $G {\rm d} \mu$ is a $\sigma$-finite
measure, and that if $T_j$ saturates $X$ with respect
to ${\rm d}\mu$, it also does so likewise with respect to $G {\rm d} \mu$, and similarly
for strong saturation. We may therefore conclude from the $L^1$ case of Theorem~\ref{thmmain} that there exist nonnegative measurable $\gamma_j$ such that
$$ \prod_{j=1}^d \gamma_j(x)^{\alpha_j} \geq 1 \mbox{ a.e. } G {\rm d} \mu$$
and such that
\begin{equation*}
\int_X \gamma_j(x)T_jf_j(x) G(x) {\rm d}\mu(x) \leq A \|f_j\|_{\mathcal{Y}_j}
\end{equation*}
for all $f_j \in \mathcal{Y}_j$. Setting $g_j = G \gamma_j$, the easy observation that
$$ \prod_{j=1}^d g_j(x)^{\alpha_j} \geq G(x) \mbox{ a.e. } {\rm d} \mu$$
completes the argument. (This argument does not directly place the $g_j$ in a weighted $L^1$-space,
but this feature can in any case be recovered from inequality \eqref{control}.) However, this observation does not 
simplify the proof of Theorem~\ref{thmmain}, and we therefore establish the general case directly.
  \end{remark}
  
\begin{remark}\label{paty}
  Following on from Remark~\ref{extn} above, if the spaces $\mathcal{Y}_j$ are
  additionally supposed to be
  $p_j$-convex for some $p_j \geq 1$, we may use the classical linear Maurey--Nikisin--Stein theory
  for positive operators to upgrade conclusion \eqref{control} of Theorem~\ref{thmmain}
  to boundedness of each $T_j$ into a suitably weighted $L^{p_j}$-space. A similar remark
  applies in the context of Theorem~\ref{thmmainmaurey} below. This perspective is further
  explored in \cite{CHV}.

  \end{remark}

\medskip
\noindent
The proof of Theorem~\ref{thmmain} is highly nonconstructive and comes about 
as a result of duality methods in the theory of convex optimisation which ultimately 
rely upon a form of the minimax principle. For the details of the proof see
Sections~\ref{outline}, \ref{overv} and \ref{dett} below.
Nevertheless, in some cases, constructive factorisations
can be given, and in other cases, the existence of the factorisation raises interesting questions and links with
other areas of analysis. See Sections~\ref{factinterp}, \ref{culture}, \ref{Loomis--Whitneyrevisited},
\ref{NLLW} and \ref{BLrevisited}.

\subsection{Multilinear Maurey-type theory}\label{maureysec}
In this section we state and prove a slight generalisation of Theorem~\ref{thmmainmaurey}, using the case
$\mathcal{X} = L^1(X)$ of Theorem~\ref{thmmain}. Interestingly, the classical Maurey
theorem follows easily from Theorem~\ref{thmmain} specialised to the {\em bilinear} case $d=2$ in which one of the
normed lattices is one-dimensional. (Therefore, the case $d=1$ of what follows is {\em not} trivial, in contrast to the situation for Theorem~\ref{thmmain}.)

\begin{thm}[Multilinear Maurey-type theorem]
\label{thmmainmaureyx}
Suppose $(X, {\rm d} \mu)$ is a $\sigma$-finite measure space, $\mathcal{Y}_j$ are normed lattices, and
$T_j : \mathcal{Y}_j \to \mathcal{M}(X)$ are positive linear maps.
Suppose that each $T_j$ saturates $X$.
Let $0 < q <1$, and suppose
\begin{equation}\label{tez}
\left\|\prod_{j=1}^d (T_jf_j)^{\alpha_j}\right\|_{L^q(X)}
\leq A \prod_{j=1}^d \Big\|f_j\Big\|_{\mathcal{Y}_j}^{\alpha_j}
\end{equation}
for all nonnegative $f_j \in \mathcal{Y}_j$, $1 \leq j \leq d$. 
Then there exist nonnegative measurable functions 
$g_j$ on $X$ such that 
\begin{equation}\label{factorisebabymaureyx}
\|\prod_{j=1}^d g_j(x)^{\alpha_j}\|_{L^{q'}(X)} = 1
\end{equation}
and such that for each $j$, 
\begin{equation}\label{controlbabymaureyx}
\int_X g_j(x)T_jf_j(x) {\rm d}\mu(x) \leq A\|f_j\|_{\mathcal{Y}_j}
\end{equation}
for all $f_j \in \mathcal{Y}_j$.
\end{thm}

It is an easy exercise using H\"older's inequality to show that if there exist $g_j$ such that \eqref{factorisebabymaureyx}
and \eqref{controlbabymaureyx} hold, then \eqref{tez} also holds. As in the case of Theorem~\ref{thmmainmaurey},
Theorem~\ref{thmmainmaureyx} admits an interpretation as a statement about factorisation of operators, see Section~\ref{mmf}.

\begin{proof}
  The main hypothesis is that
  $$ \int_X \prod_{j=1}^d (T_jf_j)^{\alpha_j q} {\rm d} \mu
  \leq A^q \prod_{j=1}^d \Big\|f_j\Big\|_{\mathcal{Y}_j}^{\alpha_j q}$$
  for all $f_j \in \mathcal{Y}_j$.
Let $\beta_j = \alpha_j q$ for $1 \leq j \leq d$ and let $\beta_{d+1} = 1 - \sum_{j=1}^d \beta_j = 1 - q > 0$.
Let $Y_{d+1} = \{0\}$ and let $\mathcal{Y}_{d+1}$ be the trivial normed lattice $\mathbb{R}$ defined on the
singleton measure space $\{0\}$. Let $T_{d+1} : \mathcal{Y}_{d+1} \to \mathcal{M}(X)$ be the linear map
$\lambda \mapsto \lambda {\bf 1}$ where ${\bf 1}$ denotes the constant function taking the value $1$ on $X$.
Then we have
$$ \int_X \prod_{j=1}^{d+1} (T_jf_j)^{\beta_j} {\rm d} \mu
\leq A^q \prod_{j=1}^{d+1} \Big\|f_j\Big\|_{\mathcal{Y}_j}^{\beta_j}$$
for all $f_j \in \mathcal{Y}_j$. So by Theorem~\ref{thmmain} in the case $\mathcal{X} = L^1(X)$ (and with
$d+1$ in place of $d$, see also Remark~\ref{smaller} above), we conclude that there exist measurable functions $G_1, \dots, G_{d+1}$ such that
\begin{equation}\label{factorisez}
\prod_{j=1}^{d+1} G_j(x)^{\beta_j} = 1 \qquad\mbox{a.e. on $X$,}
\end{equation}
and such that for each $1 \leq j \leq d+1$, 
\begin{equation}\label{controlz}
\int_X G_j(x)T_jf_j(x) {\rm d}\mu(x) \leq A^q \|f_j\|_{\mathcal{Y}_j}
\end{equation}
for all $f_j \in \mathcal{Y}_j$.

\medskip
\noindent
For $1 \leq j \leq d$, set $g_j(x) = A^{1-q}G_j(x)$; then \eqref{controlz} immediately gives \eqref{controlbabymaureyx} for
$1 \leq j \leq d$. By \eqref{factorisez} we have
$$ \prod_{j=1}^d g_j(x)^{\alpha_j} = A^{1-q}G_{d+1}(x)^{(q-1)/q}\qquad\mbox{a.e. on $X$,} $$
while \eqref{controlz} for $j = d+1$ gives
$$ \int_X G_{d+1}(x) {\rm d} \mu(x) \leq A^q.$$
Combining these last two relations gives \eqref{factorisebabymaureyx} as desired.
\end{proof}

\section{Discrete case: a convex optimisation problem}\label{outline}
\subsection{Basic set-up}
The idea behind the proof of Theorem \ref{thmmain} is to view problem \eqref{factorise} and \eqref{control} as a convex
optimisation problem. That is, we replace the number $A$ in \eqref{control}
by a variable $K$ and seek to minimise over $K$. To illustrate how this works, we first prove the theorem in a 
model case when $X$ and $Y_j$ are finite sets endowed with counting measure and $\mathcal{Y}_j=L^1(Y_j)$ 
for $j=1,\dots,d$. One reason for doing this case first is that there are no measure-theoretic or 
functional-analytical difficulties to be dealt with in this setting, and indeed
$\mathcal{X}^\prime = \mathcal{X}^\ast$ is simply the class of all functions defined on $X$ with the norm dual to that of
$\mathcal{X}$. It therefore
allows us to emphasise the nature of the problem as one concerning convex optimisation.

\medskip
\noindent
The minimisation problem we propose to examine now reads as follows. Fix $G: X \to [0, \infty)$ and consider
\begin{equation}
\label{finiteminim}
\begin{aligned}
\gamma =&\inf_{K,g_j} K\\
\text{such that}\quad&G(x)\leq\prod_{j=1}^d g_j(x)^{\alpha_j}\quad\text{for all $x\in X$ and}\\
&\max_{y_j\in Y_j}T_j^\ast g_j(y_j)\leq K\|G\|_{\mathcal{X}^\ast}\quad\text{for all $j=1,\dots,d$.}
\end{aligned}
\end{equation}
We note that this is a convex optimisation problem since we are minimising a convex, in fact linear,
function on the convex domain consisting of the $(d+1)$-tuples $(K, g_j)$ satisfying the constraints in
\eqref{finiteminim}. The convexity of this domain follows from the fact that the second set of inequalities 
is linear in the arguments $K$ and $g_j$, and the operation of taking the geometric mean on the right hand side of the 
first set of inequalities is a concave function. We note that the set of $(K, g_j)$ satisfying the constraints in 
\eqref{finiteminim} is not empty and that we can in fact find $(K, g_j)$ satisfying these constraints 
with strict inequality by taking each $g_j$ to be $2G + 1$ and letting $K$ be sufficiently large. Thus problem 
\eqref{finiteminim} satisfies what is known as {\em Slater's condition}. (We do not give full details here as the
discussion will eventually be subsumed into that of the next section.) In particular we certainly have 
$\gamma < + \infty$.

\medskip
We therefore follow a standard approach to convex optimisation problems, see for example \cite{MR2061575}.
We introduce Lagrange multipliers $\psi$ and $h_j$, where $\psi: X \to \mathbb{R}_+$ (for the first set of constraints), 
and $h_j: Y_j \to \mathbb{R}_+$ (for the second set). Note that we are only interested case where these 
functions take nonnegative values since each of the constraints is an inequality constraint. 
We then introduce the Lagrangian functional
\begin{equation}
\label{finitelagrange}
L=K+\sum_{x\in X}\psi(x)\left(G(x)-\prod_{j=1}^d g_j(x)^{\alpha_j}\right)
+\sum_{j=1}^d\sum_{y_j\in Y_j}h_j(y_j)(T_j^\ast g_j(y_j)-K\|G\|_{\mathcal{X}^\ast}).
\end{equation}
We emphasise that this function and the corresponding one defined in the proof of the general
case are the only functions which we allow to take negative values.

\medskip
For nonnegative $K$, $g_j$, $\psi$, and $h_j$
we now consider the two problems\footnote{The subscript $\mathcal{L}$ in $\gamma_\mathcal{L}$ indicates that we are looking 
at the Lagrangian version of the problem as opposed to the original version which has $\gamma$ without a subscript.}
\[
\gamma_\mathcal{L} =\inf_{K,g_j}\sup_{\psi,h_j}L
\qquad\text{and}\qquad
\eta=\sup_{\psi,h_j}\inf_{K,g_j}L
\]
called the primal problem and the dual problem respectively. We shall show that (i) the problem
for $\gamma$ is identical to the problem for $\gamma_\mathcal{L}$, (ii)
$\eta\leq A$ where $A$ is any number such that inequality \eqref{mainineq} holds, and (iii)
$\eta=\gamma_\mathcal{L}$ (it is obvious that $\eta \leq \gamma_\mathcal{L}$).
Finally, we show that the infimum in the definition of $\gamma$ is attained, and
this will complete the proof of the theorem in the special case. 

\subsection{Identification of the problems for $\gamma$ and $\gamma_\mathcal{L}$.}
We begin by studying $\gamma_\mathcal{L}$. Fix $K \geq 0$ and $g_j$ and consider $\sup_{\psi,h_j}L$.
Suppose that any of the conditions in \eqref{finiteminim} is not satisfied at some point.
Then take the relevant function $\psi$ or $h_j$ for some $j$ to have value $t>0$
at a point where an inequality fails and let all of the functions be zero everywhere else.
Then let $t\to\infty$ and notice that $\sup_{\psi,h_j}L$ goes to $+\infty$ since $t$ is multiplied
by a positive number. So if $\sup_{\psi,h_j}L<+\infty$ we must have that the conditions of \eqref{finiteminim}
are satisfied. Conversely, if these conditions are satisfied then all factors multiplying $\psi(x)$
and $h_j(y_j)$ for any $x$ and $y_j$ are non-positive so the supremum is attained by taking them
all to equal $0$. So, for each fixed $(K, g_j)$, we have that $\sup_{\psi,h_j}L < + \infty $ if and only if the 
conditions in \eqref{finiteminim} hold, in which case, $\sup_{\psi,h_j}L = K$.  Thus we see that 
the problem for $\gamma_\mathcal{L}$ is identical to problem \eqref{finiteminim}, yielding 
$\gamma_\mathcal{L}=\gamma$. Moreover the infimum in the definition of $\gamma$ is attained if and only if
the infimum in the definition of $\gamma_\mathcal{L}$ is attained.

\subsection{Proof that $\eta\leq A$.}
We rearrange $L$ as follows:
\begin{equation}
\label{finitelagrangere}
\begin{aligned}
L=&
\sum_{x\in X}\psi(x)G(x)+
K\left(1-
\|G\|_{\mathcal{X}^\ast}\sum_{j=1}^d\sum_{y_j\in Y_j}h_j(y_j)\right)\\
&+
\sum_{x\in X}\left(
\sum_{j=1}^dg_j(x)T_jh_j(x)
-\prod_{j=1}^d g_j(x)^{\alpha_j}\psi(x)\right)
\end{aligned}
\end{equation}
Let us fix $\psi$ and $h_j$ and consider $\inf_{K,g_j}L$.
First of all, note that $\inf_{K,g_j}L=-\infty$ unless
\begin{equation}
\label{condKfinite}
\|G\|_{\mathcal{X}^\ast}\sum_{j=1}^d\sum_{y_j\in Y_j}h_j(y_j)\leq 1
\end{equation}
since if this inequality fails then the term multiplying $K$ in $L$ is
negative and so by taking $g_j=0$ and letting $K$ go to infinity we get that
$\inf_{K,g_j}L=-\infty$.
Also note that $\inf_{K,g_j}L=-\infty$ unless
\begin{equation}
\label{condbetafinite}
\psi(x)\leq\prod_{j=1}^d (\alpha_j^{-1}T_jh_j(x))^{\alpha_j}
\end{equation}
for all $x\in X$. Seeing this is a matter of choosing $g_j(x)$ to balance
the arithmetic-geometric mean inequality. Specifically, suppose that this condition \eqref{condbetafinite} fails at
a point $x_0$. Then we let $K=0$ and $g_j(x)=0$ for all $x\neq x_0$ and all $j=1,\dots,d$.
There are now two cases to consider. Firstly, if there exists an index $j_0$
such that $T_{j_0}h_{j_0}(x_0)=0$ then we take $g_j(x_0)=1$ for all $j\neq j_0$ and $g_{j_0}(x_0)=t>1$.
Then
\[L= \sum_{x\in X}\psi(x)G(x) + \sum_{\substack{j=1\\j\neq j_0}}^d T_{j}h_{j}(x_0)-t^{\alpha_{j_0}}\psi(x_0).\]
Since $\alpha_{j_0}>0$ we can let $t$ go to infinity and see that
$\inf_{K,g_j}L=-\infty$.
In the other case we have that $T_{j}h_{j}(x_0)>0$ for all $j=1,\dots,d$. Then we let
\[g_{j}(x_0)=t \alpha_j ((T_jh_j)(x_0))^{-1}\prod_{j'=1}^d (\alpha_{j'}^{-1}T_{j'}h_{j'})(x_0)^{\alpha_{j'}}\]
and note that
\[L= \sum_{x\in X}\psi(x)G(x)+ t\left(\prod_{j=1}^d (\alpha_j^{-1}T_jh_j(x_0))^{\alpha_j} - \psi(x_0)\right).\]
So by the assumption of the failure of \eqref{condbetafinite} at $x_0$ we see that letting $t \to \infty$ yields 
$\inf_{K,g_j}L=-\infty$.

\medskip
Conversely, if conditions \eqref{condKfinite} and \eqref{condbetafinite} hold then
the factor multiplying $K$ is nonnegative and an application of the arithmetic-geometric mean inequality 
gives that for any choice of $g_j$ then for each $x\in X$ the term in the second bracket of \eqref{finitelagrangere} 
is nonnegative, so we attain $\inf_{K,g_j}L$ by letting $K=0$ and $g_j(x)=0$ for all $x\in X$ and $j=1,\dots,d$.
Hence, for each fixed $(\psi, h_j)$, $\inf_{K,g_j}L > - \infty$ if and only if $\psi$ and $h_j$ satisfy conditions 
\eqref{condKfinite} and \eqref{condbetafinite}, in which case $\inf_{K,g_j}L = \sum_{x \in X} \psi(x) G(x)$. 
Noting that there always exist $\psi$ and $h_j$ satisfying conditions \eqref{condKfinite} and \eqref{condbetafinite}, we see that $\eta$ is the solution to
\begin{equation}
\label{finitemaxim}
\begin{aligned}
\eta=&\sup_{\psi,h_j} \sum_{x\in X}\psi(x) G(x)\\
\text{such that}\quad&\psi(x)\leq\prod_{j=1}^d (\alpha_j^{-1}T_jh_j(x))^{\alpha_j}\quad\text{for all $x\in X$ and}\\
&\|G\|_{\mathcal{X}^\ast}\sum_{j=1}^d\sum_{y_j\in Y_j}h_j(y_j)\leq 1.
\end{aligned}
\end{equation}

For any $\psi$ and $h_j$ satisfying the conditions in \eqref{finitemaxim} we can calculate
\begin{align*}
\sum_{x\in X}\psi(x) G(x)
&\leq
\sum_{x\in X}\prod_{j=1}^d (\alpha_j^{-1}T_jh_j(x))^{\alpha_j} G(x)
\leq
\left\|\prod_{j=1}^d (\alpha_j^{-1}T_jh_j)^{\alpha_j}\right\|_{\mathcal{X}} \Big\|G\Big\|_{\mathcal{X}^\ast}\\
&\leq
A\prod_{j=1}^d \|\alpha_j^{-1}h_j\|_{\mathcal{Y_j}}^{\alpha_j}
\|G\|_{\mathcal{X}^\ast}
\leq
A\sum_{j=1}^d \|h_j\|_1
\|G\|_{\mathcal{X}^\ast}\leq A
\end{align*}
where the inequalities follow in order from the first condition of \eqref{finitemaxim},
the definition of the norm on $\mathcal{X}^\ast$, the inequality \eqref{mainineq},
the arithmetic-geometric mean inequality, and the second condition of \eqref{finitemaxim}.
Taking the supremum now yields $\eta\leq A$.

\subsection{Proof that $\gamma_\mathcal{L}=\eta$ and existence of minimisers.} 
This is a minimax argument. As we have noted above, it is immediate that $\eta\leq\gamma_\mathcal{L}$ and this is referred to as 
weak duality. The other direction, giving $\gamma_\mathcal{L}=\eta$, is called strong duality
and does not hold in general. However there are various conditions which guarantee strong duality,
such as Slater's condition which is the condition that the original problem \eqref{finiteminim}
is convex and there exists a point satisfying all of the constraints with
strict inequality. See \cite{MR2061575}, p.226. We have noted above that Slater's 
condition holds in our setting. Moreover, Slater's condition guarantees 
the existence of a maximiser for the dual problem. However, we need optimisers for the primal problem.
If for all $x\in X$ we have $T_j {\bf 1}(x)>0$ -- which is simply the saturation hypothesis in our present case -- then the set of $g_j$'s which satisfy the constraints 
of \eqref{finiteminim} with $K=2A$ will be compact, and therefore a minimiser will exist.

\section{General case: overview of the proof}\label{overv}
Let us now turn to the argument for Theorem~\ref{thmmain} in the general case. It will entail substantial 
measure-theoretic and functional-analytic considerations not present in the case when $X$ and $Y_j$ are finite sets. While it is an attractive idea to try to establish Theorem~\ref{thmmain} by approximating the general case by the
discrete case, this does not seem a feasible route, even when $\mathcal{X}$ and $\mathcal{Y}_j$ are $L^q$ and $L^{p_j}$ spaces respectively,
and a direct approach is therefore required. The bulk of 
the proof of Theorem~\ref{thmmain} will be devoted to establishing a special case in which $X$ is a finite 
measure space\footnote{To be clear, a measure space $(X, {\rm d} \mu)$ with $\mu (X) < \infty$, not a
finite set $X$ with counting measure.}
and where we impose strong saturation on the $T_j$ instead of saturation. This leads to
the crucial conclusion that we can take the factors $g_j$ to lie in $L^1( X, {\rm d} \mu)$. 
The result reads as follows:

\begin{thm}
\label{thmsimple}
Suppose $X$ is a finite measure space, $\mathcal{X}$ is a K\"othe space of functions defined on $X$, $\mathcal{Y}_j$ are normed lattices, and that the linear operators
$T_j: \mathcal{Y}_j \to \mathcal{M}(X)$ are positive. 
Suppose that each $T_j$ strongly saturates $X$. Suppose that
\begin{equation}\label{again}
\left\|\prod_{j=1}^d (T_jf_j)^{\alpha_j}\right\|_{\mathcal{X}}\leq A 
\prod_{j=1}^d \Big\|f_j\Big\|_{\mathcal{Y}_j}^{\alpha_j}
\end{equation}
holds for all 
nonnegative $f_j \in \mathcal{Y}_j$, $1 \leq j \leq d$.\footnote{Notice that a hypothesis of strong saturation is unrealistic in the presence of inequality~\eqref{again} unless $X$ has finite measure.} Then for every nonnegative
$G\in\mathcal{X}^\prime$ there exist nonnegative functions $g_j\in L^1(X, {\rm d}\mu)$ such that 
\begin{equation}\label{factorise1}
G(x) \leq \prod_{j=1}^d g_j(x)^{\alpha_j}\qquad\mbox{a.e. on $X$,}
\end{equation}
and such that for each $j$, 
\begin{equation}\label{control1a}
\int_X g_j(x)T_jf_j(x) {\rm d}\mu(x) \leq A\|G\|_{\mathcal{X}^\prime}\|f_j\|_{\mathcal{Y}_j}
\end{equation}
for all 
$f_j \in \mathcal{Y}_j$.
\end{thm}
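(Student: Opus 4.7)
The plan is to follow the template of Section~\ref{outline} and recast the conclusions~\eqref{factorise1}--\eqref{control1a} as a convex optimisation problem. The primal problem minimises $K\geq 0$ over nonnegative $g_j$ satisfying the pointwise factorisation $G\leq \prod_j g_j^{\alpha_j}$ together with the duality constraints $\int_X g_j\,T_jf_j\,d\mu \leq K\|G\|_{\mathcal{X}'}\|f_j\|_{\mathcal{Y}_j}$ for every $f_j\in\mathcal{Y}_j$. Introducing nonnegative Lagrange multipliers $\psi$ for the pointwise constraint and $h_j\in\mathcal{Y}_j$ reparameterising the family of duality constraints, one arrives at a Lagrangian of precisely the same shape as~\eqref{finitelagrange}, namely
\[
L = K + \int_X \psi\Bigl(G - \prod_{j=1}^d g_j^{\alpha_j}\Bigr)\,d\mu + \sum_{j=1}^d\Bigl(\int_X g_j\,T_jh_j\,d\mu - K\|G\|_{\mathcal{X}'}\|h_j\|_{\mathcal{Y}_j}\Bigr).
\]
Super-strong saturation furnishes a strictly feasible primal point (take each $g_j$ to be a sufficiently large constant, legitimate because $\mu(X)<\infty$), so Slater's condition holds. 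The identification $\gamma_\mathcal{L} = \gamma$ and the weak duality chain $\eta \leq A$ (the latter via the AM-GM balancing manoeuvre together with the hypothesis~\eqref{again}) both transfer essentially verbatim from the finite-case argument of Section~\ref{outline}, since neither the discreteness of $X$ nor that of $Y_j$ was used at that step.

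The main obstacle is strong duality, $\gamma_\mathcal{L}=\eta$, along with the attainment of the primal infimum. The Lagrangian is convex in $(K,g_j)$ and affine in $(\psi,h_j)$, so a convex-analytic minimax theorem of Sion or Ky~Fan type will do the job provided one side of the pair is equipped with a compact topology in which the relevant partial Lagrangian is suitably semi-continuous. In the absence of any reflexivity assumption on $\mathcal{Y}_j$, the natural choice is to localise $g_j$ to the weak-$*$ compact ball $\{0\leq g_j\leq M\}$ in $L^\infty(X)$ (allowed since $\mu(X)<\infty$) and $K$ to $[0,2A]$. Super-strong saturation is precisely the hypothesis that secures the uniform $L^\infty$ ceiling: fixing $h_j^{(0)}\in\mathcal{Y}_j$ with $T_jh_j^{(0)}\geq c>0$ on $X$, the duality constraint forces $\int_X g_j\,d\mu \leq c^{-1}K\|G\|_{\mathcal{X}'}\|h_j^{(0)}\|_{\mathcal{Y}_j}$, which can then be promoted to an a priori $L^\infty$ bound through the factorisation~\eqref{factorise1} combined with truncation of each $g_j$ in turn. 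The delicate technical point is verifying the semi-continuity of the nonlinear term $\int_X \psi \prod_j g_j^{\alpha_j}\,d\mu$ in the weak-$*$ topology; it is here that one is forced to allow $\psi$ to range over $(L^\infty(X))^\ast$, the space of bounded finitely additive measures on $X$, rather than some more restrictive subspace.

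The minimax theorem then delivers an optimal multiplier $\psi^\ast$ which, as just indicated, a priori only lives in $(L^\infty(X))^\ast$, and the step I expect to be the hardest is passing from this object back to an honest $L^1$ function. I would invoke the Yosida--Hewitt decomposition $\psi^\ast = \psi_{\mathrm{ca}} + \psi_{\mathrm{fa}}$ and represent the countably additive part as a nonnegative element of $L^1(X,d\mu)$ via Radon--Nikodym. The final hurdle is to show that the purely finitely additive part $\psi_{\mathrm{fa}}$ makes no contribution to the optimal dual value: one exploits the characterisation of purely finitely additive measures as being concentrated on sets of arbitrarily small $\mu$-measure, together with the fact that the test quantities $G$ and $\prod_j(T_jh_j)^{\alpha_j}$ to which $\psi^\ast$ is applied are integrable against $\mu$ and, thanks to super-strong saturation and $\mu(X)<\infty$, bounded below in a way that forbids concentration on null sequences. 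Once $\psi_{\mathrm{fa}}$ has been shown to be inessential and discarded, the AM-GM extremality conditions built into the Lagrangian at the optimum read off as precisely the nonnegative functions $g_j\in L^1(X,d\mu)$ realising~\eqref{factorise1} and~\eqref{control1a}.
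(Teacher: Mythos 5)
Your proposal follows the convex-optimisation template of Section~\ref{outline} in spirit, but the specific route you lay out would not go through, and in two places it inverts the roles that the paper's argument assigns to the primal and dual variables.

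The first and most serious gap is the choice of ambient space for the primal variables $g_j$. You propose to localise each $g_j$ to a weak-star compact ball $\{0 \leq g_j \leq M\}$ in $L^\infty(X)$, and you acknowledge that super-strong saturation only hands you an $L^1$ bound $\int_X g_j\,{\rm d}\mu \leq c^{-1}K\|G\|_{\mathcal{X}'}\|h_j^{(0)}\|_{\mathcal{Y}_j}$, which you then say can be ``promoted'' to an $L^\infty$ bound by truncation. This cannot work: truncating $g_j$ from above destroys the pointwise constraint $G \leq \prod_j g_j^{\alpha_j}$ (points where some $g_k$ is small force the others to be large), and there is no a priori $L^\infty$ bound to be had — the functions $g_j$ in the conclusion are genuinely only in $L^1$. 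The correct move, and the one the paper makes, is the opposite of yours: one places the primal variables in $(L^\infty(X))^\ast$, since the only quantitative information available is the $(L^\infty)^\ast$-norm bound $S_j(\mathbf{1}) \lesssim K$, and uses Banach--Alaoglu there. One then descends at the very end from $(L^\infty)^\ast$ to $L^1$ via Yosida--Hewitt. So the Yosida--Hewitt decomposition should be applied to the \emph{primal} variables $S_j$, not — as in your final paragraph — to the dual multiplier $\psi^\ast$. Extracting a nice dual optimum does not, by itself, give you the integrable $g_j$ that~\eqref{factorise1}--\eqref{control1a} ask for.

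The second gap is the form of the Lagrangian. Your $L$ carries the nonlinear term $\int_X \psi\,\prod_j g_j^{\alpha_j}\,{\rm d}\mu$, and you flag the resulting semi-continuity difficulties without resolving them. The paper sidesteps this entirely by \emph{linearising} the pointwise constraint before dualising: by (what becomes) Lemma~\ref{abscont}, the inequality $G \leq \prod_j S_{j\mathrm{rn}}^{\alpha_j}$ a.e. is equivalent to the \emph{linear} family of inequalities $\int_X G \prod_j \beta_j^{\alpha_j}\,{\rm d}\mu \leq \sum_j \alpha_j S_j(\beta_j)$ over simple $\beta_j \geq 0$. Writing the Lagrangian with test functions $\beta_j$ in place of a single multiplier $\psi$ then yields a functional that is affine in $(K,S_j)$ and concave in $(\beta_j, h_j)$ — exactly what the Lopsided Minimax Theorem needs — while also being the mechanism through which the Radon--Nikodym derivatives $S_{j\mathrm{rn}}$ enter. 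This bilinearisation is not a technical convenience; it is a structural ingredient your proposal is missing, and without it the verification of convexity/concavity/lower semicontinuity for a minimax theorem does not close. There is also a smaller unaddressed technicality: even with the paper's Lagrangian, $L$ can take the value $+\infty$ when $T_j h_j \notin L^\infty$, which forces the detour via the auxiliary Lagrangian $\Lambda$ and the truncated test functions $\psi_j \leq T_j h_j$; your real-valuedness assumption is silently violated for the same reason.
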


\medskip
\noindent
In the proof of this theorem we introduce an extended real-valued Lagrangian function $L(\Phi,\Psi)$ (where $\Phi$
corresponds to the variables $(K, g_j)$ and $\Psi$ corresponds to the variables $(\psi, h_j)$ of the discrete model
case discussed above). See Section~\ref{turn} below for precise details of the definition of $L$.
As in the model case, we relate $\sup_\Psi\inf_\Phi L$ (which we had previously called $\eta$) to 
problem \eqref{factorise1} and \eqref{control1a} and $\inf_\Phi\sup_\Psi L$ (which we had previously called $\gamma$) to inequality \eqref{again}.
We then need to show that
$$ \min_\Phi\sup_\Psi L(\Phi, \Psi) = \sup_\Psi\inf_\Phi L(\Phi, \Psi),$$
and for this we need to use the Lopsided Minimax Theorem (which can be found as Theorem 7
from Chapter 6.2 of \cite{MR749753}):
\begin{thm}\label{minimax}
    Suppose $C$ and $D$ are convex subsets of vector spaces and that
    $C$ is endowed with a topology for which the vector space operations are continuous. 
    Further, suppose that $L : C \times D \to \mathbb{R}$ satisfies
    \begin{enumerate}
        \item [(i)]$\Phi\mapsto L(\Phi,\Psi)$ is convex for all $\Psi\in D$;
        \item [(ii)]$\Psi\mapsto L(\Phi,\Psi)$ is concave for all $\Phi\in C$;
        \item [(iii)]$\Phi\mapsto L(\Phi,\Psi)$ is lower semicontinuous for all $\Psi\in D$; and
        \item [(iv)]there exists a $\Psi_0\in D$ such that the sublevel sets $\{\Phi \in C \, : \, L(\Phi,\Psi_0)\leq\lambda\}$
          are compact for all sufficiently large $\lambda\in\mathbb{R}$.
    \end{enumerate}
    Then
    \begin{equation}\label{minimaxconcl}
\min_{\Phi\in C}\sup_{\Psi\in D} L(\Phi, \Psi) = \sup_{\Psi\in D}\inf_{\Phi\in C} L(\Phi, \Psi).
  \end{equation}
\end{thm}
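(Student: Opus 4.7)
The weak duality inequality $\sup_{\Psi \in D} \inf_{\Phi \in C} L \leq \inf_{\Phi \in C} \sup_{\Psi \in D} L$ holds trivially, so the task reduces to establishing the reverse inequality together with the attainment of the infimum on the left-hand side of \eqref{minimaxconcl}.

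My plan is to work with the function $F(\Phi) := \sup_{\Psi \in D} L(\Phi, \Psi)$, which is convex and lower semicontinuous by (i) and (iii), and the quantity $\alpha := \sup_{\Psi \in D} \inf_{\Phi \in C} L(\Phi, \Psi)$, which I assume to lie in $\mathbb{R}$ (the extreme cases being essentially trivial). Fix $\lambda_0 > \alpha$ large enough that, by (iv), the set $S := \{\Phi \in C : L(\Phi, \Psi_0) \leq \lambda_0\}$ is compact. For each $\Psi \in D$ and $\varepsilon > 0$ set $A_\Psi^\varepsilon := \{\Phi \in S : L(\Phi, \Psi) \leq \alpha + \varepsilon\}$, which is closed in $S$ by (iii), hence compact. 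The central claim is that the family $\{A_\Psi^\varepsilon\}_{\Psi \in D}$ has the finite intersection property: granted this, compactness of $S$ gives $\bigcap_\Psi A_\Psi^\varepsilon \neq \emptyset$ for each $\varepsilon > 0$, and a nested intersection along a countable sequence $\varepsilon_n \downarrow 0$ produces $\Phi^\ast \in S$ with $F(\Phi^\ast) \leq \alpha$, simultaneously yielding the reverse inequality and the attainment.

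The finite intersection property is the crux of the argument and the main obstacle, the delicacy being that the lopsided hypothesis furnishes compactness only through $\Psi_0$; the key trick is to include $\Psi_0$ alongside any given $\Psi_1, \ldots, \Psi_n$. Form
\[
W := \bigl\{(t_0, t_1, \ldots, t_n) \in \mathbb{R}^{n+1} : \exists\, \Phi \in C \text{ with } L(\Phi, \Psi_i) \leq t_i \text{ for } 0 \leq i \leq n\bigr\},
\]
which is convex by (i) and closed under adding $\mathbb{R}_{\geq 0}^{n+1}$. If the target $p := (\lambda_0, \alpha + \varepsilon, \ldots, \alpha + \varepsilon)$ fails to lie in $W$, Hahn--Banach separation in $\mathbb{R}^{n+1}$ supplies a nonzero nonnegative normal $(\mu_0, \ldots, \mu_n)$, normalised so that $\sum_i \mu_i = 1$, for which
\[
\sum_{i=0}^n \mu_i L(\Phi, \Psi_i) \geq \mu_0 \lambda_0 + (1-\mu_0)(\alpha + \varepsilon) \qquad \text{for all } \Phi \in C.
\]

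Concavity (ii) combined with convexity of $D$ then gives $L(\Phi, \Psi^\ast) \geq \sum_i \mu_i L(\Phi, \Psi_i)$ for $\Psi^\ast := \sum_i \mu_i \Psi_i \in D$, and taking the infimum over $\Phi \in C$ delivers
\[
\alpha \geq \inf_{\Phi \in C} L(\Phi, \Psi^\ast) \geq \mu_0 \lambda_0 + (1-\mu_0)(\alpha + \varepsilon).
\]
Since $\lambda_0 > \alpha$ and $\varepsilon > 0$, the right-hand side strictly exceeds $\alpha$ for every $\mu_0 \in [0, 1]$, producing a contradiction. Thus $p \in W$, which is exactly the finite intersection property for $\{A_{\Psi_i}^\varepsilon\}_{i=1}^n$ inside $S$, completing the proof.
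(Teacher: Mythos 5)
The paper does not prove this statement: it cites it as the Lopsided Minimax Theorem, Theorem~7 of Chapter~6.2 of Aubin--Ekeland \cite{MR749753}, and uses it as a black box. So there is no internal proof to compare against. What you have written is therefore an independent, self-contained proof, and it is correct. The strategy — prove the reverse inequality and attainment simultaneously by exhibiting a common point of the sublevel sets $A_\Psi^\varepsilon$, and reduce the finite intersection property to a finite-dimensional separation argument with $\Psi_0$ inserted alongside $\Psi_1,\dots,\Psi_n$ to supply compactness — is the classical Sion/Ky~Fan style route, adapted nicely to the lopsided hypothesis. Each step is sound: $W$ is convex by (i), $W+\mathbb{R}_{\geq 0}^{n+1}\subseteq W$ forces $\mu\geq 0$, concavity (ii) and convexity of $D$ give $L(\Phi,\Psi^\ast)\geq\sum_i\mu_iL(\Phi,\Psi_i)$, and the affine lower bound $\mu_0\lambda_0+(1-\mu_0)(\alpha+\varepsilon)$ strictly exceeds $\alpha$ at both endpoints $\mu_0=0$ and $\mu_0=1$, hence everywhere on $[0,1]$; the resulting contradiction gives $p\in W$, which is exactly the finite intersection property inside the compact set $S$.

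One small point you pass over with ``the extreme cases being essentially trivial'': $\alpha=+\infty$ is indeed immediate, but $\alpha=-\infty$ deserves a sentence. It cannot occur: since $L$ is real-valued and (iv) makes some sublevel set $\{L(\cdot,\Psi_0)\leq\lambda_0\}$ compact, every sublevel set $\{L(\cdot,\Psi_0)\leq\lambda\}$ with $\lambda\leq\lambda_0$ is a closed (by (iii)) subset of a compact set, hence compact; if they were all nonempty their intersection would yield a $\Phi$ with $L(\Phi,\Psi_0)=-\infty$, impossible. Thus $\inf_\Phi L(\Phi,\Psi_0)$ is finite (and in fact attained), so $\alpha\geq\inf_\Phi L(\Phi,\Psi_0)>-\infty$ whenever $C\neq\emptyset$. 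With that addition the argument is complete.
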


\begin{remark}
  The existence of the minimum on the left-hand side of \eqref{minimaxconcl} is part of the conclusion:
  there exists a $\bar{\Phi}\in C$ such that $ \sup_{\Psi\in D} L(\bar{\Phi},\Psi) = \inf_{\Phi\in C}\sup_{\Psi\in D} L(\Phi, \Psi)$.
  Once we know that $ \inf \sup = \sup \inf$ this is easy because (iii) tells us that the map
  $\Phi \mapsto \sup_{\Psi\in D} L(\Phi, \Psi)$ is lower semicontinuous, and (iv) then tells us that the sublevel sets
  $$ \{ \Phi \in C \, : \, \sup_{\Psi\in D} L(\Phi, \Psi) \leq \lambda\} \subseteq
  \{ \Phi \in C \, : \, L(\Phi, \Psi_0) \leq \lambda\} $$
  are closed and compact, and hence $\Phi \mapsto \sup_{\Psi\in D} L(\Phi, \Psi)$ achieves its minimum on any such set.
  The fact that $\sup \inf \leq  \inf \sup$ is trivial, so the main content of the theorem is that  $ \inf \sup \leq \sup \inf$.
\end{remark}

\begin{remark}
  There is nothing to stop both sides of \eqref{minimaxconcl} from being $+\infty$. Indeed,
  a nontrivial conclusion of the theorem is that if the right-hand side is finite, so is the left-hand side.
  \end{remark}

\begin{remark}
  Traditional versions of minimax theorems assume that $C$ itself is compact, rather than compactness of certain sublevel
  sets as condition (iv). However, in our case, we cannot, for
  the reasons set out below, expect $C$ to be compact. {\em It is a remarkable feature of our analysis that the saturation hypothesis we must impose corresponds {\bf precisely} to condition (iv) of the minimax theorem.}
\end{remark}

\begin{remark}\label{urgh}
  The observant reader will have noticed that we have indicated our intention to introduce an extended real-valued Lagrangian $L$, but the minimax
  theorem applies only to real-valued Lagrangians. This mismatch necessitates a small detour which we wish to suppress here.\footnote{B.~Ricceri has recently informed us (private communication) that Theorem~\ref{minimax} continues to hold when the Lagrangian is permitted to take the value $+\infty$. The detour takes no longer than establishing this more general minimax statement.}
  For details see Section~\ref{detourdetails} below.
  \end{remark}

\bigskip
\noindent
It is a somewhat delicate matter to choose the vector space
where we will locate the variables $\Phi$ featuring in the Lagrangian
which we will use. Corresponding to the variables $g_j$ occuring in the discrete
model case of Section~\ref{outline}, we will now have variables $S_j$, which we would like
to take to be elements of $L^1(X)_+$. (It is the weighted geometric mean of a particular collection of these  
which will ultimately furnish the desired factorisation.) However, it turns out to be helpful
to instead allow, in the first instance, the $S_j$ be elements of the larger space $L^\infty(X)^\ast_+$, that is, the positive 
cone of the dual of $L^\infty(X)$.\footnote{Had we instead opted to work from the outset with $S_j \in 
L^1(X)_+$, we would have been forced to place unnatural topological conditions on $X$ in order to 
identify $L^1(X)_+$ with a subspace of a dual space, and in any case we would have to work in the larger space 
of finite regular Borel measures on $X$ in order to exploit weak-star compactness.} Thus we consider the vector space
$\mathbb{R} \times L^\infty(X)^\ast \times \dots \times L^\infty(X)^\ast$ and take $C$ to be a suitable subset of
the positive cone in this space. Ideally we would like to take $C$
to be a {\em norm-bounded} convex subset and then use the Banach--Alaoglu theorem to assert compactness of $C$;
but since we are not expecting any quantitative $L^1$ bounds on the functions $S_j$ appearing in the factorisation,
there is no natural norm-bounded set with which to work. Instead, we take $C$ to be the whole positive cone
$\mathbb{R}_+ \times L^\infty(X)^\ast_+ \times \dots \times L^\infty(X)^\ast_+$, endowed with the weak-star topology.
The price for this is the need to verify hypothesis (iv) of Theorem~\ref{minimax}. Fortunately this turns out
to be not so difficult, and in fact is rather natural in our setting. Carrying out this process will yield some
distinguished members of $L^\infty(X)^\ast_+$. However, working with the dual of $L^\infty(X)$ presents its own difficulties since
some elements of $L^\infty(X)^\ast_+$ are quite exotic. Fortunately the theory of finitely additive measures
comes to the rescue, and we will be able to show that elements satisfying
the properties we require can be in fact be found in the smaller space $L^1(X)_+$. See Section~\ref{turn}
below for more details.

\medskip
\noindent
To set the scene for this, we recall three results of Yosida and Hewitt which can be found 
in \cite{MR0045194}. The setting for each of these results is a $\sigma$-finite measure space $(X, {\rm d \mu})$.

\begin{thm}
\label{thmdual}
There is an isometric isomorphism between the space of finitely additive measures on $X$ of finite total variation
which are $\mu$-absolutely continuous\footnote{This means that the finitely additive measure $\tau$ satisfies $\tau(E) = 0$ 
whenever $\mu(E) = 0$.}
and the space of bounded linear functionals on $L^\infty(X, {\rm d}\mu)$.
For a finitely additive measure $\tau$ with these properties the corresponding element of $L^\infty(X, {\rm d}\mu)^\ast$
is given by $\tau(\psi)=\int_X \psi \, {\rm d} \tau $ (where the integral is the so-called Radon integral).
Furthermore $L^1(X,{\rm d}\mu)$ embeds isometrically into $L^\infty(X, {\rm d}\mu)^\ast$ in such a way that the
application of $g\in L^1(X, {\rm d}\mu)$ to an element of $\psi\in L^\infty(X, {\rm d}\mu)$ is given by
$\int_X \psi g {\, \rm d}\mu $ where the integral is now the Lebesgue integral.
\end{thm}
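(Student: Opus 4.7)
The plan is to establish the two sides of the correspondence by a standard density argument, relying on finite additivity (as a surrogate for countable additivity throughout) together with a Hahn--Jordan-type decomposition for finitely additive measures of finite total variation.

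Starting from a bounded linear functional $\Lambda\in L^\infty(X,{\rm d}\mu)^\ast$, I would define $\tau(E) := \Lambda(\chi_E)$ for every measurable $E\subseteq X$. Linearity of $\Lambda$ immediately yields finite additivity of $\tau$ (note that countable additivity is not to be expected here, since disjoint unions $\bigcup_{n=1}^\infty E_n$ do not produce convergence of $\sum_{n=1}^N\chi_{E_n}$ to $\chi_{\bigcup E_n}$ in the $L^\infty$ norm). If $\mu(E)=0$ then $\chi_E=0$ as an element of $L^\infty$, so $\tau(E)=0$, giving $\mu$-absolute continuity, while $|\tau(E)|\leq\|\Lambda\|$ gives finite total variation.

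Conversely, starting from $\tau$ with the stated properties, I would construct the Radon integral by first defining $\int_X\psi\,{\rm d}\tau:=\sum_k c_k\tau(E_k)$ on simple functions $\psi=\sum_k c_k\chi_{E_k}$, which is well defined (by finite additivity), linear, well defined on $\mu$-equivalence classes (by the absolute continuity of $\tau$), and satisfies the bound $|\int\psi\,{\rm d}\tau|\leq\|\psi\|_\infty\|\tau\|_{\rm TV}$. Using the density of simple functions in $L^\infty(X,{\rm d}\mu)$ in the sup norm, this extends to a bounded linear functional on $L^\infty(X,{\rm d}\mu)$ whose value on $\chi_E$ recovers $\tau(E)$, showing the two constructions are mutually inverse.

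For the isometry, one direction follows from the above bound. For the reverse, I would construct a Hahn--Jordan-type decomposition $\tau=\tau^+-\tau^-$ with $\tau^\pm$ nonnegative finitely additive and $\|\tau\|_{\rm TV}=\tau^+(X)+\tau^-(X)$, by setting $\tau^+(E):=\sup\{\tau(F):F\subseteq E\text{ measurable}\}$ and verifying finite additivity by a careful combinatorial splitting argument (the standard countably additive proof uses a maximising exhaustion, which has to be replaced by an $\varepsilon$-approximation argument in the finitely additive setting). Then approximating $\chi_{E^+}-\chi_{E^-}$ (where $E^\pm$ are near-maximisers for $\tau^\pm$) by elements of the unit ball of $L^\infty$ saturates $\|\Lambda\|\geq\|\tau\|_{\rm TV}$. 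Finally, the embedding $L^1\hookrightarrow L^\infty(X,{\rm d}\mu)^\ast$ is obtained by observing that any $g\in L^1(X,{\rm d}\mu)$ defines the countably additive (in particular finitely additive) measure $g\,{\rm d}\mu$, which is $\mu$-absolutely continuous and has total variation exactly $\|g\|_1$.

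The main obstacle is the Hahn--Jordan decomposition in the finitely additive regime: because the supremum defining $\tau^+$ need not be attained and one cannot exhaust by a countable union of near-optimal sets, verifying finite additivity of $\tau^\pm$ and the total-variation identity requires a delicate $\varepsilon/2^n$-type argument that avoids any countable limiting. A secondary subtlety is that the $L^1$ embedding is generally strict --- there exist ``purely finitely additive'' $\mu$-absolutely continuous $\tau$ that are not represented by any $g\in L^1$, and this phenomenon is precisely what forces the later arguments in the paper to engage with the space $L^\infty(X)^\ast$ rather than working directly in $L^1(X)$.
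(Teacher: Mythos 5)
Note first that the paper does not prove Theorem~\ref{thmdual}: it is stated as one of three results explicitly attributed to Yosida and Hewitt, ``which can be found in \cite{MR0045194},'' and is used as a black box. There is therefore no in-paper proof against which to compare your argument.

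That said, your sketch is the standard proof and is substantially correct: $\tau(E):=\Lambda(\chi_E)$ gives finite additivity from linearity, $\mu$-absolute continuity because $\chi_E=0$ in $L^\infty$ when $\mu(E)=0$, and boundedness in total variation from the bound on $\Lambda$; conversely the Radon integral of a simple function is controlled by $\|\psi\|_\infty\|\tau\|_{\mathrm{TV}}$ and extends by density of simple functions in $L^\infty$. The $L^1$ embedding and the identification of its image with the countably additive part are likewise handled correctly, and you rightly flag the strictness of the inclusion (which is exactly what Theorems~\ref{thmYH1} and \ref{thmYH2} address).

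One place where your route is heavier than necessary is the reverse inequality $\|\Lambda\|\geq\|\tau\|_{\mathrm{TV}}$. You reach for a finitely additive Hahn--Jordan decomposition and correctly identify that its construction in the absence of countable additivity is delicate; but this machinery is not needed here. Since
$$\|\tau\|_{\mathrm{TV}}=\sup\Bigl\{\sum_{i=1}^N|\tau(E_i)| : \{E_i\}_{i=1}^N \text{ a finite measurable partition of } X\Bigr\},$$
one may, for each finite partition $\{E_i\}$, take $f:=\sum_i\operatorname{sgn}(\tau(E_i))\chi_{E_i}$, which satisfies $\|f\|_{\infty}\leq1$ and $\Lambda(f)=\sum_i|\tau(E_i)|$. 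Taking the supremum over partitions gives $\|\Lambda\|\geq\|\tau\|_{\mathrm{TV}}$ directly. This sidesteps the $\varepsilon/2^n$ exhaustion issue you flag as the main obstacle; that obstacle is real for Hahn--Jordan in the finitely additive setting, but it is not on the critical path for the isometry. Your approach is not wrong, merely a longer road than the statement requires.
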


\begin{thm}
\label{thmYH1}
Any element $S\in L^\infty(X, {\rm d}\mu)^\ast$ can be written uniquely as $S=S_{\mathrm{ca}}+S_{\mathrm{pfa}}$
where $S_{\mathrm{ca}}$ is countably additive (and hence is given by integration against a function in 
$L^1(X, {\rm d} \mu)$) and $S_{\mathrm{pfa}}$ is purely finitely additive.
Furthermore $S \geq 0$ if and only if $S_{\mathrm{ca}}\geq 0$ and $S_{\mathrm{pfa}}\geq 0$.
\end{thm}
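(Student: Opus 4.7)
My plan is to prove this as an analogue of the Jordan--Hahn--Lebesgue decomposition carried out in the category of finitely additive measures. By Theorem~\ref{thmdual}, I may identify $L^\infty(X,{\rm d}\mu)^\ast$ with the space $\mathrm{ba}(X,\mu)$ of $\mu$-absolutely continuous finitely additive signed measures of bounded total variation. The first observation is that $\mathrm{ba}(X,\mu)$ is a Banach lattice: positive and negative parts, and hence finite joins and meets, are constructed by the usual Hahn-decomposition argument, which uses only finite additivity. Using this, I reduce to the case $S\geq 0$ by writing $S = S^+ - S^-$ and treating each piece separately.

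Assume $S\geq 0$. I define
\[
S_{\mathrm{ca}}(E) \;=\; \sup\{\nu(E) : \nu \text{ countably additive}, \ 0 \leq \nu \leq S\}
\]
pointwise on measurable $E$. The family $\mathcal{N} = \{\nu : 0 \leq \nu \leq S,\ \nu \text{ countably additive}\}$ is upward-directed, since for $\nu_1,\nu_2\in\mathcal{N}$ the lattice join $\nu_1\vee\nu_2$ (computed in $\mathrm{ba}$) is again countably additive and is dominated by $S$. Consequently $S_{\mathrm{ca}}(E)$ is realised as a monotone limit along $\mathcal{N}$, which immediately yields monotonicity and finite additivity of $S_{\mathrm{ca}}$. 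Having produced $S_{\mathrm{ca}}$, I set $S_{\mathrm{pfa}} = S - S_{\mathrm{ca}}\geq 0$ and check its purity: if $\lambda$ is countably additive with $0\leq\lambda\leq S_{\mathrm{pfa}}$, then $\lambda+S_{\mathrm{ca}}\in\mathcal{N}$ and dominates $S_{\mathrm{ca}}$, forcing $\lambda=0$ by maximality.

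For uniqueness I suppose $S = S_c + S_p = S_c' + S_p'$ with the two pairs respectively countably additive and purely finitely additive, and set $\sigma = S_c - S_c' = S_p' - S_p$. Lattice estimates in $\mathrm{ba}(X,\mu)$ give $|\sigma|\leq S_c+S_c'$ (countably additive) and $|\sigma|\leq S_p+S_p'$. A short directedness argument, analogous to the one used to construct $S_{\mathrm{ca}}$, shows that the sum of two purely finitely additive nonnegative measures is again purely finitely additive; hence $|\sigma|$ is simultaneously countably additive and purely finitely additive, so $|\sigma|=0$ and $\sigma=0$. The positivity statement then follows: if $S\geq 0$ then $S_{\mathrm{ca}},S_{\mathrm{pfa}}\geq 0$ directly from their construction, while the converse is immediate from $S = S_{\mathrm{ca}} + S_{\mathrm{pfa}}$, and uniqueness guarantees that every decomposition must be the one we built.

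The main obstacle, I expect, is verifying that $S_{\mathrm{ca}}$ is genuinely countably additive rather than merely finitely additive: one must exchange a supremum over $\mathcal{N}$ with a limit along an exhausting sequence $E_n\uparrow E$. The key manoeuvre is that directedness of $\mathcal{N}$ allows one to select, for each $\varepsilon>0$, a \emph{single} $\nu\in\mathcal{N}$ with $\nu(E)>S_{\mathrm{ca}}(E)-\varepsilon$, and then to exploit the countable additivity of that fixed $\nu$ to obtain $\lim_n S_{\mathrm{ca}}(E_n) \geq \lim_n \nu(E_n) = \nu(E) > S_{\mathrm{ca}}(E) - \varepsilon$. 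The $\sigma$-finiteness of $\mu$ enters here to ensure that the approximating countably additive measures have the desired regularity properties, and it is this interplay between the algebraic structure of $\mathcal{N}$ and the countable additivity of its members that makes the argument go through.
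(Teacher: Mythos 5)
The paper does not prove this statement: it is stated as a recalled result of Yosida and Hewitt (\cite{MR0045194}) and used as a black box, so there is no proof of the paper's own to compare against. You have therefore supplied a proof of the classical Yosida--Hewitt decomposition, and your argument is essentially the standard one: identify $L^\infty(X,\mu)^\ast$ with $\mathrm{ba}(X,\mu)$, treat it as a Dedekind complete vector lattice, reduce to $S\geq 0$, define $S_{\mathrm{ca}}$ as the supremum of the nonnegative countably additive minorants of $S$, use upward-directedness to show this supremum is itself countably additive, and obtain purity of the residue and uniqueness from lattice estimates. Your key manoeuvre for countable additivity --- fix a single $\nu\in\mathcal{N}$ with $\nu(E)>S_{\mathrm{ca}}(E)-\varepsilon$ and exploit the countable additivity of that fixed $\nu$ along $E_n\uparrow E$ --- is precisely the right one, and the purity check (that $0\leq\lambda\leq S_{\mathrm{pfa}}$ with $\lambda$ countably additive forces $\lambda+S_{\mathrm{ca}}\in\mathcal{N}$, contradicting maximality) is correct.

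Three points deserve tightening. First, ``the usual Hahn-decomposition argument'' is slightly misleading: a genuine Hahn set need not exist for a merely finitely additive measure; what is available (and what your construction actually uses) is the Jordan decomposition $S^{+}(E)=\sup\{S(F):F\subseteq E\}$, which requires only bounded total variation. Second, in the uniqueness step the estimates $|\sigma|\leq S_c+S_c'$ and $|\sigma|\leq S_p+S_p'$ implicitly assume all four components are nonnegative; for signed components you should write $|\sigma|\leq|S_c|+|S_c'|$ and $|\sigma|\leq|S_p|+|S_p'|$ and then observe that taking total variation preserves both countable additivity and purity. Third, you have not actually derived the parenthetical claim that $S_{\mathrm{ca}}$ ``is given by integration against a function in $L^1(X,\mathrm{d}\mu)$''; this is where $\sigma$-finiteness genuinely enters, via the Radon--Nikodym theorem applied to the $\mu$-absolutely continuous finite measure $S_{\mathrm{ca}}$. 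Your remark attributing the need for $\sigma$-finiteness to regularity of the approximating measures is misplaced --- the directedness and monotone-limit argument work for arbitrary $\mu$.
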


\medskip
We need not concern ourselves here with the definition of purely finitely additive measures, but, in order to be
able to use these results, we do need a useful characterisation of which measures are purely finitely additive.

\begin{thm}
\label{thmYH2}
A nonnegative finitely additive measure $\tau$ which is $\mu$-absolutely continuous is purely finitely additive
if and only if for every nonnegative countably additive measure $\sigma$ which is $\mu$-absolutely continuous,
every measurable set $E$ and every pair of positive numbers $\delta_1$ and $\delta_2$, there is a measurable
subset $E'$ of $E$ such that $\sigma(E')<\delta_1$ and $\tau(E\setminus E')<\delta_2$.
\end{thm}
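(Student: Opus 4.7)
The plan is to prove the two directions of Theorem~\ref{thmYH2} separately, relying on the standard defining property of pure finite additivity (implicit in Theorem~\ref{thmYH1}): a nonnegative finitely additive $\tau$ is purely finitely additive if and only if the only nonnegative countably additive $\sigma'$ satisfying $0\le\sigma'\le\tau$ is $\sigma'\equiv 0$. Under this characterisation, the ``if'' direction is immediate from the splitting hypothesis applied to $\sigma'$ itself, while the ``only if'' direction is driven by the construction of an explicit countably additive minorant of $\tau$.

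I would dispose of the easy direction first. Assume the splitting property holds, let $\sigma'$ be any nonnegative countably additive $\mu$-absolutely continuous measure with $\sigma'\le\tau$, and fix $\epsilon>0$. Apply the splitting hypothesis with $\sigma=\sigma'$, $E=X$, and $\delta_1=\delta_2=\epsilon$ to produce a measurable $E'\subseteq X$ with $\sigma'(E')<\epsilon$ and $\tau(X\setminus E')<\epsilon$. Since $\sigma'\le\tau$, it follows that $\sigma'(X\setminus E')\le\tau(X\setminus E')<\epsilon$, so $\sigma'(X)=\sigma'(E')+\sigma'(X\setminus E')<2\epsilon$; letting $\epsilon\to 0$ forces $\sigma'\equiv 0$, so $\tau$ is purely finitely additive.

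For the forward direction, suppose $\tau$ is purely finitely additive and introduce the auxiliary set function
\[ \rho(E) := \inf\{\,\sigma(A)+\tau(E\setminus A) : A\subseteq E,\ A\text{ measurable}\,\}, \]
which is essentially the infimum $\sigma\wedge\tau$ in the lattice of finitely additive measures. Clearly $\rho(E)\le\sigma(E)$ (take $A=E$) and $\rho(E)\le\tau(E)$ (take $A=\emptyset$). Finite additivity of $\rho$ on a disjoint decomposition $E=E_1\sqcup E_2$ is routine: any admissible $A\subseteq E$ splits as $(A\cap E_1)\sqcup(A\cap E_2)$, giving $\rho(E)\ge\rho(E_1)+\rho(E_2)$, while concatenating nearly optimal choices from each $E_i$ supplies the reverse inequality. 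The critical step is to upgrade $\rho$ to countable additivity: if $E_n\downarrow\emptyset$ measurably, then $\rho(E_n)\le\sigma(E_n)\to 0$ by the countable additivity of $\sigma$, so $\rho$ is continuous from above at $\emptyset$; combined with finite additivity and finiteness of $\rho$ (which follows from $\rho\le\tau$ and the finite total variation of $\tau$), this is equivalent to countable additivity. Now $\rho$ is a countably additive minorant of $\tau$, so pure finite additivity of $\tau$ forces $\rho\equiv 0$. Unpacking this, for every measurable $E$ and every $\delta>0$ there is an $A\subseteq E$ with $\sigma(A)+\tau(E\setminus A)<\delta$, so choosing $\delta=\min(\delta_1,\delta_2)$ delivers both $\sigma(A)<\delta_1$ and $\tau(E\setminus A)<\delta_2$, which is the splitting property.

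The main technical obstacle is the countable additivity of $\rho$, which rests on the domination $\rho\le\sigma$ together with finiteness of $\sigma$ on the relevant sets. This finiteness is automatic in the applications envisaged in the paper (where $\tau\in L^\infty(X,{\rm d}\mu)^\ast$ has finite total variation and the countably additive $\sigma$ under comparison may be taken finite), and more generally it can be arranged by passing to a $\sigma$-finite exhaustion, compatibly with the standing $\sigma$-finiteness of $\mu$. Conceptually the whole argument is engineered around $\rho$: domination by $\sigma$ endows it with countable additivity, and domination by $\tau$ lets the purely finitely additive hypothesis annihilate it.
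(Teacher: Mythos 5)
Your construction of $\rho(E)=\inf\{\sigma(A)+\tau(E\setminus A):A\subseteq E\text{ measurable}\}$ is precisely the lattice infimum $\sigma\wedge\tau$ in the space of bounded finitely additive measures, and the strategy --- show that $\rho$ is a countably additive minorant of $\tau$ and then annihilate it via pure finite additivity --- is the standard Yosida--Hewitt route. The paper does not reprove Theorem~\ref{thmYH2} (it is quoted from \cite{MR0045194}), so there is no internal argument to compare against; for $\sigma$ finite, which is the framework of that reference, your proof is correct: the finite-additivity check for $\rho$ goes through, $\rho\leq\sigma$ gives continuity of $\rho$ from above at $\emptyset$, and $\rho\leq\tau$ gives finiteness of $\rho$.

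The one genuine defect is the closing remark that finiteness of $\sigma$ ``can be arranged by passing to a $\sigma$-finite exhaustion.'' This is not a repairable gap in the proof; the \emph{statement} of the theorem actually fails for $\sigma$-finite but unbounded $\sigma$. Take $X=\mathbb{N}$ with $\mu=\sigma$ the counting measure, and let $\tau$ be a Banach limit viewed as a finitely additive probability measure on $\mathbb{N}$: it is purely finitely additive (it vanishes on singletons, hence dominates no nonzero countably additive measure) and trivially $\mu$-absolutely continuous. With $E=X$ and $\delta_1<1$, the requirement $\sigma(E')<\delta_1$ forces $E'=\emptyset$, whence $\tau(E\setminus E')=\tau(X)=1$, which defeats any $\delta_2<1$. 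The breakdown you flag in your continuity argument --- $\sigma(E_n)\not\to 0$ when $E_n\downarrow\emptyset$ with $\sigma(E_1)=\infty$ --- is exactly where the theorem itself breaks, so no exhaustion can rescue it. The hypotheses of Theorem~\ref{thmYH2} should therefore be read as requiring $\sigma$ bounded; correspondingly, in the paper's one application (inside the proof of Lemma~\ref{abscont}, with $\sigma=\mu$ and $E=E_3$), the set $E_3$ should first be shrunk, using $\sigma$-finiteness of $\mu$, to one of finite $\mu$-measure, so that one is effectively applying the theorem with the finite measure $\mu|_{E_3}$.
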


\medskip
\noindent
We wish to remark that analysis related to the dual space of $L^\infty$ has also been employed in a
number of other contexts recently. See for example \cite{BGM}, \cite{SS}, \cite{Toland1} and, in the
financial mathematics literature, \cite{MMS}.

\section{General case: details of the proof}\label{dett}
\subsection{Preliminaries}\label{Preliminaries}
We shall first need two lemmas which will be useful for the proof of  Theorem~\ref{thmsimple} and also that of 
Theorem~\ref{thmmain} itself. The first one is they key technical tool which, in the context of Theorem~\ref{thmsimple}, will allow us do induce existence of
suitable integrable functions from existence of corresponding members of the dual of $L^\infty$. We shall continue
to assume that $\alpha_j > 0$ and that $\sum_{j=1}^d \alpha_j = 1$.

\begin{lem}\label{abscont} 
Let $(X, {\rm d} \mu)$ be a $\sigma$-finite measure space and suppose that $S_j\in L^\infty(X)_+^\ast$. Suppose
that $G$ is a measurable function such that
\begin{equation}\label{opl}
    \int_X G(x)\prod_{j=1}^d \beta_j^{\alpha_j}(x) \, {\rm d}\mu(x)\leq\sum_{j=1}^d \alpha_j S_j(\beta_j)
    \qquad\text{for all simple functions $\beta_j$ on $X$.}
\end{equation}
If $({S}_{j\mathrm{rn}})$ denotes the Radon--Nikodym derivative with respect to $\mu$ of the
component of $S_j$ which is countably additive, then
\begin{equation}\label{lpo}
G(x)\leq \prod_{j=1}^d {S}_{j\mathrm{rn}}(x)^{\alpha_j}\quad\text{a.e. on $X$. }
\end{equation}
Conversely, if $G$ is such that \eqref{lpo} holds, then \eqref{opl} holds.
\end{lem}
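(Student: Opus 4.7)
The converse direction follows immediately from the pointwise arithmetic--geometric mean inequality: if $G\leq\prod_{j=1}^d S_{j\mathrm{rn}}^{\alpha_j}$ a.e., then for any nonnegative $\beta_j$ one has $G\prod_{j=1}^d\beta_j^{\alpha_j}\leq\prod_{j=1}^d(S_{j\mathrm{rn}}\beta_j)^{\alpha_j}\leq\sum_{j=1}^d\alpha_j S_{j\mathrm{rn}}\beta_j$, which, once integrated and combined with $\int S_{j\mathrm{rn}}\beta_j\, d\mu = S_{j\mathrm{ca}}(\beta_j)\leq S_j(\beta_j)$ (the last inequality being nonnegativity of $S_{j\mathrm{pfa}}$, asserted in Theorem~\ref{thmYH1}), gives \eqref{opl}.

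For the main direction the plan is to proceed in two stages: first eliminate the purely finitely additive pieces $S_{j\mathrm{pfa}}$ from the hypothesis, and then extract the pointwise bound via a judicious choice of test functions. For the first stage, decompose $S_j = S_{j\mathrm{ca}}+S_{j\mathrm{pfa}}$ by Theorem~\ref{thmYH1}. Given simple nonnegative $\beta_j$ with supports in a common set $E$ of finite $\mu$-measure, the crucial observation uses Theorem~\ref{thmYH2}: for each $\delta>0$ and each $j$, with $\sigma=\mu$ and $\tau=S_{j\mathrm{pfa}}$, it produces a measurable $E_j'\subseteq E$ with $\mu(E_j')<\delta$ and $S_{j\mathrm{pfa}}(E\setminus E_j')<\delta$. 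Setting $F_\delta=E\setminus\bigcup_{j=1}^d E_j'$ we have $\mu(E\setminus F_\delta)<d\delta$ and $S_{j\mathrm{pfa}}(F_\delta)<\delta$ for every $j$. Since $F_\delta$ has finite measure each $\beta_j\chi_{F_\delta}$ is simple, and applying the hypothesis to this collection together with $S_{j\mathrm{pfa}}(\beta_j\chi_{F_\delta})\leq\|\beta_j\|_\infty\delta$ yields
$$\int_{F_\delta}G\prod_{j=1}^d\beta_j^{\alpha_j}\, d\mu\leq\sum_{j=1}^d\alpha_j\int_{F_\delta}S_{j\mathrm{rn}}\beta_j\, d\mu+\delta\sum_{j=1}^d\alpha_j\|\beta_j\|_\infty.$$
Choosing $\delta_n=2^{-n}$, Borel--Cantelli forces $\liminf_n F_{\delta_n}=E$ modulo a $\mu$-null set; Fatou's lemma on the left (after a preliminary truncation $G_M=\min(G,M)$ to keep the integrals finite) and trivial majorisation on the right deliver the \emph{cleaned inequality}
$$\int G_M\prod_{j=1}^d\beta_j^{\alpha_j}\, d\mu\leq\sum_{j=1}^d\alpha_j\int S_{j\mathrm{rn}}\beta_j\, d\mu$$
for all simple nonnegative $\beta_j$.

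For the second stage, restrict attention to a set $B$ of finite measure on which each $S_{j\mathrm{rn}}$ is bounded above and bounded away from zero; $\sigma$-finiteness of $X$ and $L^1$-integrability of each $S_{j\mathrm{rn}}$ ensure that $\{x:\prod_{j=1}^d S_{j\mathrm{rn}}(x)^{\alpha_j}>0\}$ is exhausted by such sets. Extending the cleaned inequality to bounded measurable nonnegative test functions on $B$ by standard monotone approximation, I substitute $\beta_j=\phi/S_{j\mathrm{rn}}$ for an arbitrary simple nonnegative $\phi$ supported in $B$; this choice saturates AM--GM pointwise, producing $\prod_{j=1}^d\beta_j^{\alpha_j}=\phi/\prod_{j=1}^d S_{j\mathrm{rn}}^{\alpha_j}$ and $\sum_{j=1}^d\alpha_j S_{j\mathrm{rn}}\beta_j=\phi$, so the cleaned inequality collapses to
$$\int_B\phi\left(\frac{G_M}{\prod_{j=1}^d S_{j\mathrm{rn}}^{\alpha_j}}-1\right)d\mu\leq 0.$$
Arbitrariness of $\phi$ then forces $G_M\leq\prod_{j=1}^d S_{j\mathrm{rn}}^{\alpha_j}$ a.e.\ on $B$, and $M\to\infty$ gives the same bound for $G$. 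On the exceptional region where some $S_{j_0\mathrm{rn}}$ vanishes, taking $\beta_{j_0}=\lambda\chi_B$ and $\beta_j=\chi_B$ for $j\neq j_0$ in the cleaned inequality gives $\lambda^{\alpha_{j_0}}\int_B G_M\, d\mu\leq C$ with $C$ independent of $\lambda$, so $\lambda\to\infty$ forces $G_M=0$ there, which is \eqref{lpo} on this part too. The main obstacle is the first stage: it is the peculiar mismatch between $\mu$-small and $S_{j\mathrm{pfa}}$-concentrated sets afforded by Theorem~\ref{thmYH2} that allows the purely finitely additive contribution to be made negligible while leaving the countably additive part of the inequality intact.
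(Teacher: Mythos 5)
Your proof is correct, and it takes a genuinely different route from the paper's. The paper proceeds by contradiction: it assumes \eqref{lpo} fails, peels off an $\varepsilon$-strict set $E_1$, fixes a further subset where the $S_{j\mathrm{rn}}$ are controlled, and then for each of two cases (some $S_{j_0\mathrm{rn}}\equiv 0$, or all $S_{j\mathrm{rn}}$ bounded below) makes a single carefully calibrated choice of $\beta_j$ --- in the second case the same AM--GM saturating functions $u_j=S_{j\mathrm{rn}}^{-1}\prod_k S_{k\mathrm{rn}}^{\alpha_k}$ that underlie your $\beta_j=\phi/S_{j\mathrm{rn}}$ --- and invokes Theorem~\ref{thmYH2} once to push the purely finitely additive contribution below the $\varepsilon$-slack, yielding a contradiction. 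Your proof instead factors the argument into two clean, reusable stages: first, by iterating Theorem~\ref{thmYH2} with $\delta_n=2^{-n}$ and passing to the limit via Borel--Cantelli and Fatou, you establish the \emph{cleaned} inequality $\int G_M\prod_j\beta_j^{\alpha_j}\,d\mu\le\sum_j\alpha_j\int S_{j\mathrm{rn}}\beta_j\,d\mu$, which is precisely \eqref{opl} with each $S_j$ replaced by its countably additive part; second, you read off \eqref{lpo} from this cleaned inequality by a direct variational substitution, with no contradiction needed. The gain is modularity --- the cleaned inequality is an independently meaningful intermediate statement and the pointwise extraction is transparent --- and you avoid the paper's delicate bookkeeping of constants $c,C,\delta,\varepsilon$. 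The cost is the extra limiting argument (Borel--Cantelli plus truncation and Fatou) where the paper gets away with one application of Theorem~\ref{thmYH2}. Both arguments rest on exactly the same two pillars (Theorem~\ref{thmYH2} to disarm the purely finitely additive pieces, AM--GM saturation to extract the pointwise bound), so the substance is the same even though the organization is visibly different.
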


\begin{remark}
This result extends the special case $d=1$ which is implicit in Theorem \ref{thmYH1}.
\end{remark}

\begin{proof}
The converse statement follows immediately from the arithmetic-geometric mean inequality, so we turn to the forward assertion.
Suppose not. Then there exists a set $E_0$ with $\mu(E_0)>0$ such that inequality \eqref{lpo} fails on $E_0$ and
we can find an $\varepsilon>0$ and $E_1\subseteq E_0$ with $\mu(E_1)>0$ such that
\begin{displaymath}
    G(x)-\varepsilon> \prod_{j=1}^d S_{j\mathrm{rn}}^{\alpha_j}(x)
\end{displaymath}
for all $x\in E_1$. 
Now take $\beta_j$ to be simple functions supported on $E_1$. Then we get
\begin{equation}
    \label{bigexpr}
\begin{gathered}
    \int_{E_1} \prod_{j=1}^d S_{j\mathrm{rn}}^{\alpha_j}(x)\prod_{j=1}^d \beta_j^{\alpha_j}(x) \, {\rm d}\mu(x)
    + \varepsilon \int_{E_1} \prod_{j=1}^d \beta_j^{\alpha_j}(x) \, {\rm d} \mu(x)\\
    \leq
    \int_{E_1} G(x)\prod_{j=1}^d \beta_j^{\alpha_j}(x) \, {\rm d}\mu(x)
    \leq
    \sum_{j=1}^d \alpha_j S_j(\beta_j)\\
    =
    \sum_{j=1}^d \alpha_j
    \int_{E_1} S_{j\mathrm{rn}}(x) \beta_j(x) \, {\rm d}\mu(x)
    +
    \sum_{j=1}^d \alpha_j
    \int_{E_1} \beta_j(x) \, {\rm d}\tau_{j\mathrm{pfa}}
\end{gathered}
\end{equation}
where $\tau_{j\mathrm{pfa}}$ is the purely finitely additive measure associated to the purely finitely additive component 
$S_{j \mathrm{pfa}}$ of $S_j$.

\medskip
We can find a subset $E_2 \subseteq E_1$ with $\mu(E_2)>0$ and a $C>0$ such that $S_{j\mathrm{rn}}(x)\leq C$
for all $x\in E_2$ and all $j$.
There are now two cases to consider. 

\medskip
First, assume that there exists a subset $E_3 \subseteq E_2$ such that
$\mu(E_3)>0$ and an index $j_0$ such that $S_{j_0\mathrm{rn}}(x)=0$ for all $x\in E_3$.
Now let $\delta$ be small and positive (to be specified later) and $E_4$ a subset of $E_3$ with $0< \mu(E_4) < \infty$
(also to be specified later\footnote{We are using $\sigma$-finiteness of $\mu$ to ensure that we can find such an
  $E_4$ with $\mu(E_4)$ finite.}), 
and take $\beta_j=\delta\chi_{E_4}$ for $j\neq j_0$ and $\beta_{j_0}=\delta^{1-\alpha^{-1}_{j_0}}\chi_{E_4}$.
This implies that $\prod_{j=1}^d \beta_j^{\alpha_j}(x)=1$ for all $x\in E_4$ and
so the top line of~\eqref{bigexpr} equals $0+\varepsilon\mu(E_4)$.
The first term on the bottom line of~\eqref{bigexpr} can be bounded by $C\delta\mu(E_4)$ since
there is no contribution from the term with index $j_0$. 
The second term we can bound by
$\delta^{1-\alpha^{-1}_{j_0}} \left(\sum_j\tau_{j\mathrm{pfa}}\right)(E_4)$.
We have not chosen $E_4$ precisely yet. To do this we use Theorem \ref{thmYH2} above.

\medskip
Indeed, applying Theorem \ref{thmYH2} with $\tau := \sum_j\tau_{j\mathrm{pfa}}$, $\sigma := \mu$, $E := E_3$
and $\delta_1 := \mu(E_3)/2$ gives that for all $\delta_2 > 0$ there is an $E_4 \subseteq E_3$ such that
$\mu(E_3 \setminus E_4) < \mu(E_3)/2$ and $\tau(E_4) < \delta_2$. So ~\eqref{bigexpr} implies
$$ \varepsilon \mu(E_4) \leq C\delta\mu(E_4) + \delta^{1-\alpha^{-1}_{j_0}}\tau(E_4) 
\leq C\delta\mu(E_4) + \delta^{1-\alpha^{-1}_{j_0}} \delta_2.$$
Now choose $\delta_2 = \varepsilon \mu(E_3)/4 \delta^{1-\alpha^{-1}_{j_0}}$, so that for some $E_4 \subseteq E_3$ we have 
$$ \varepsilon \mu(E_4) \leq C\delta\mu(E_4) + \varepsilon \mu(E_3)/4 \leq C\delta\mu(E_4) + \varepsilon \mu(E_4)/2.$$
Finally, choosing $\delta < \varepsilon/(2C)$ yields a contradiction, since by construction $\mu(E_4) > 0$.

\medskip
In the other case, we have that $S_{j\mathrm{rn}}(x)>0$ for a.e. $x\in E_2$ and all $j$. Then we can find a subset
$E_3 \subseteq E_2$ with $0<\mu(E_3)<\infty$ and a number $c>0$ such that $S_{j\mathrm{rn}}(x)\geq c$ for all
$x\in E_3$ and all $j$. We define $u_{j}$ on this set as
\begin{displaymath}
    u_{j}(x)=S_{j\mathrm{rn}}(x)^{-1}\prod_{k=1}^d S_{k\mathrm{rn}}^{\alpha_k}(x)
\end{displaymath}
and note that $u_{j}(x)\leq c^{-1}C$ and that $\prod_j u_{j}^{\alpha_j}(x)=1$.
Since these functions are bounded then if we are given $\delta>0$
we can find simple functions $\tilde{\beta}_{j}$ such that
$u_j(x)-\delta\leq \tilde{\beta}_j(x)\leq u_j(x)$ for all $x\in E_3$.
We may assume that $\tilde{\beta}_j(x)\geq c C^{-1}$ for all $x\in E_3$.
Let us take $\beta_j=\tilde{\beta}_j\chi_{E_4}$ where $E_4$ is a subset of $E_3$ to be chosen.
Then for $x\in E_4$ we have that
\begin{align*}
    0\leq&
    \prod_{j=1}^du_j^{\alpha_j}(x)-
    \prod_{j=1}^d\beta_j^{\alpha_j}(x)
    =
    \sum_{k=1}^d \left(\prod_{j=1}^{k} u_j^{\alpha_j}(x)
    \left(u_{k}^{\alpha_{k}}(x)-\beta_{k}^{\alpha_{k}}(x)\right)
    \prod_{j=k+1}^d\beta_{j}^{\alpha_j}(x)\right)\\
    =&
    \sum_{k=1}^d \left(\prod_{j=1}^{k} u_j^{\alpha_j}(x)
    \alpha_k \frac{\xi_{k}^{\alpha_k}(x)}{\xi_{k}(x)}
    \left(u_{k}(x)-\beta_{k}(x)\right)
    \prod_{j=k+1}^d\beta_{j}^{\alpha_j}(x)\right)
    \leq d C^2 c^{-2}\delta.
\end{align*}
Here $\xi_{j_0}(x)$ lies between $\beta_{j_0}(x)$ and $u_{j_0}(x)$ and we have used that
$c/C \leq \beta_j(x),\xi_j(x),u_j(x)\leq C/c $.
Now we can estimate the first term on the top line of \eqref{bigexpr} from below by
\begin{displaymath}
    \int_{E_4}\prod_{j=1}^d S_{j\mathrm{rn}}^{\alpha_j}(x) \left(\prod_{j=1}^d u_j^{\alpha_j}(x)- d C^2c^{-2}\delta \right)  {\rm d}\mu(x)
    \geq
    \int_{E_4}\prod_{j=1}^d S_{j\mathrm{rn}}^{\alpha_j}(x) \, {\rm d}\mu(x) - \mu(E_4) d C^3c^{-2}\delta,
\end{displaymath}
and the second term on the top line of \eqref{bigexpr} we can estimate from below by
$\varepsilon\mu(E_4)(1-dC^2c^{-2}\delta)$ since $\prod_j u_{j}^{\alpha_j}(x)=1$ on $E_4$.

\medskip
The first term on the bottom line of \eqref{bigexpr} we can estimate from above using $\beta_j \leq u_j$ on $E_4$ and the definition of $u_j$
by
\begin{displaymath}
    \int_{E_4}\prod_{j=1}^d S_{j\mathrm{rn}}^{\alpha_j}(x) \, {\rm d}\mu(x).
\end{displaymath}
The second term we can bound by
$Cc^{-1}\left(\sum_j\tau_{j\mathrm{fa}}\right)(E_4)$.

Collecting this we have that
\begin{gather*}
    \int_{E_4}\prod_{j=1}^d S_{j\mathrm{rn}}^{\alpha_j}(x) \, {\rm d} \mu(x)-\mu(E_4)dC^3c^{-2}\delta
    +\varepsilon\mu(E_4)(1-d C^2c^{-2}\delta)\\\leq
    \int_{E_4}\prod_{j=1}^d S_{j\mathrm{rn}}^{\alpha_j}(x) \, {\rm d} \mu(x)
    +Cc^{-1}\left(\sum_j\tau_{j\mathrm{pfa}}\right)(E_4).
\end{gather*}
The integrals cancel\footnote{Since $S_j \in (L^\infty)^*$ we have $S_{j\mathrm{rn}} \in L^1$, and the terms we are cancelling
  are indeed finite.} and we get
$$ \varepsilon\mu(E_4) \leq \mu(E_4)(\varepsilon d C^2c^{-2}\delta + dC^3c^{-2}\delta) + Cc^{-1}\tau(E_4)$$
where $\tau = \sum_j\tau_{j\mathrm{pfa}}$. We can then choose $E_4$ and $\delta$ in much the same way as before to yield
a contradiction. Note that $\delta$ will only depend on $d$, $C$, $c$ and $\varepsilon$.

\medskip
So in both cases we have a contradiction to the existence of $E_0$, and so \eqref{lpo} must hold.

\end{proof}

\medskip
\noindent
It turns out that we shall need to consider the action of $S \in L^\infty(X, {\rm d}\mu)^\ast_+$ not just on
$L^\infty(X)$, but on general nonnegative measurable functions in $\mathcal{M}(X)$. The reasons for this are 
explained in Section~\ref{turn} below. To this end, we extend $S$ to $\mathcal{M}(X)_+$ by declaring, for $F \in \mathcal{M}(X)_+$,
$$ S(F) := \sup\{ S(f) \, : \, 0 \leq f \leq F, f \in L^\infty(X)\} = \sup\{ S(\phi) \, : \, 0 \leq \phi \leq F, \; \phi \mbox{ simple}\}.$$
Of course $S(F)$ will often now take the value $+ \infty$.

\medskip
\noindent
The second lemma concerns continuity properties of
this extension. Consider the map $S \mapsto S(F)$ for fixed $F \in \mathcal{M}(X)_+$ as
$S$ ranges over $L^\infty(X)_+^*$. If $F \in L^\infty(X)$ this map is norm continuous and hence weak-star
continuous. For $F \in  \mathcal{M}(X)_+$ we can assert less.

\begin{lem}\label{lsc}
  Fix  $F \in \mathcal{M}(X)_+$. Then the map $S \mapsto S(F)$ from $L^\infty(X)_+^*$ to $\mathbb{R} \cup \{+ \infty\}$
  is weak-star lower semicontinuous.
  \end{lem}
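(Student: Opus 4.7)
The plan is to exhibit $S \mapsto S(F)$ as a pointwise supremum of a family of weak-star continuous maps, and then to invoke the standard fact that a pointwise supremum of continuous real-valued functions is lower semicontinuous (with values in $\mathbb{R} \cup \{+\infty\}$).

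First I would recall that, by the very definition of the weak-star topology on $L^\infty(X)^\ast$, for each fixed $f \in L^\infty(X)$ the evaluation map $S \mapsto S(f)$ is weak-star continuous from $L^\infty(X)^\ast$ to $\mathbb{R}$. Restricting to $L^\infty(X)_+^\ast$ preserves this continuity. Then, for the given $F \in \mathcal{M}(X)_+$, the defining formula
\begin{equation*}
S(F) \;=\; \sup\bigl\{\,S(f)\,:\,f\in L^\infty(X),\ 0\leq f\leq F\,\bigr\}
\end{equation*}
expresses $S \mapsto S(F)$ as the pointwise supremum over the index set $\{f\in L^\infty(X) : 0\leq f\leq F\}$ of the weak-star continuous maps $S \mapsto S(f)$.

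To conclude lower semicontinuity, I would just verify that sublevel sets are closed. For any $\lambda \in \mathbb{R}$,
\begin{equation*}
\{\,S \in L^\infty(X)_+^\ast : S(F) \leq \lambda\,\}
\;=\; \bigcap_{\substack{f\in L^\infty(X)\\ 0\leq f\leq F}} \{\,S : S(f) \leq \lambda\,\},
\end{equation*}
and each set on the right is weak-star closed as the preimage of $(-\infty,\lambda]$ under the continuous functional $S \mapsto S(f)$. An arbitrary intersection of closed sets is closed, so the left-hand side is weak-star closed. This is exactly weak-star lower semicontinuity of $S \mapsto S(F)$ as a function into $\mathbb{R} \cup \{+\infty\}$, completing the proof.

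There is essentially no obstacle here; the only subtlety worth noting is that the supremum can be $+\infty$, which is harmless since lower semicontinuity into $\mathbb{R} \cup \{+\infty\}$ is precisely the closedness of the finite sublevel sets. One may optionally remark that taking the supremum over simple $\phi$ with $0 \leq \phi \leq F$ (as in the alternative formulation in the definition of $S(F)$) gives the same conclusion by the same argument, since each such evaluation $S \mapsto S(\phi)$ is likewise weak-star continuous.
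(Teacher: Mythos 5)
Your proposal is correct and rests on exactly the same key observation as the paper's proof, namely that $S(F)=\sup\{S(f):f\in L^\infty(X),\,0\le f\le F\}$ and that each evaluation $S\mapsto S(f)$ is weak-star continuous. The only difference is presentational: you package this as the standard fact that a pointwise supremum of continuous functions is lower semicontinuous (verified via closedness of sublevel sets), whereas the paper unwinds the same idea directly by constructing, for each $\epsilon>0$, a weak-star open neighbourhood of $S$ on which $R(F)\ge S(F)-\epsilon$; there is no substantive difference.
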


\noindent
We remark that we have to be cautious here since $L^\infty(X)_+^*$ with the weak-star topology is not a metric space;
so we cannot simply concern ourselves with sequential lower semicontinuity.

\begin{proof}
  Let $S \in L^\infty(X)_+^*$. Either $S(F) = + \infty$ or $S(F) < + \infty$. Let us first deal with the latter case.
  We need to show that for every
  $\epsilon > 0$ there is a weak-star open neighbourhood $U$ of $S$ such that for $R \in U$ we have
  $R(F) \geq S(F) - \epsilon$.

  \medskip
  \noindent
  Since $S(F) < + \infty$ there is an $f \in L^\infty(X)$ with $ 0 \leq f \leq F$ such that $S(f) > S(F) - \epsilon$.
  Let
  $$U = \{R \in L^\infty(X)_+^* \, : \, R(f) >  S(F) - \epsilon\}.$$
  Then $S \in U$, and $U$ is weak-star open since for each $f \in L^\infty(X)$ the functional $R \mapsto R(f)$
  is weak-star continuous. So for $R \in U$ we have
  $$ R(F) \geq R(f) > S(F) - \epsilon$$
  which is what we needed.

  \medskip
  \noindent
  Now we look at the case $S(F) = +\infty$. We now need to show that for every $N \in \mathbb{N}$ there is a
  weak-star open neighbourhood $U$ of $S$ such that for $R \in U$ we have
  $R(F) \geq N$.

  \medskip
  \noindent
  Since $S(F) = + \infty$ there is an $f \in L^\infty(X)$ with $ 0 \leq f \leq F$ such that $S(f) > N$.
  Let
  $$U = \{R \in L^\infty(X)_+^* \, : \, R(f) >  N \}.$$
  Then $S \in U$, and $U$ is weak-star open since for each $f \in L^\infty(X)$ the functional $R \mapsto R(f)$
  is weak-star continuous. So for $R \in U$ we have
  $$ R(F) \geq R(f) > N$$
  which is what we needed.
  
\end{proof}

\subsection{Proof of Theorem~\ref{thmsimple}}\label{turn}
Suppose we are in the situation in the statement of Theorem~\ref{thmsimple}. In particular, we assume 
that $G \in \mathcal{X}^\prime$, and we may clearly assume that $\|G\|_{\mathcal{X}'} \neq 0$.

\medskip
We recall from Section~\ref{overv} that we take $C$ (in which we locate the variables $\Phi = (K, S_j)$)
to be the positive cone $\mathbb{R}_+ \times (L^\infty(X)^\ast_+)^d$
in the vector space $\mathbb{R} \times (L^\infty(X)^\ast)^d$, and $C$ is given the topology inherited from the product
topology of the corresponding weak-star topologies. We take $D$
(in which we locate the variables $\Psi=(\beta_j,h_j)$) to be the positive cone
in the vector space $\mathcal{S}(X)^d \times \mathcal{Y}_1 \times
\dots \times \mathcal{Y}_d$.

\medskip
\noindent
Therefore, for $K\in\mathbb{R}_+$, $S_j\in L^\infty(X)^\ast_+$, 
$\beta_j$ simple functions on $X$ and $h_j \in \mathcal{Y}_j$ we consider
the functional
\begin{align*}
    L = \, &K+\left(\int_X G(x)\prod_{j=1}^d \beta_j^{\alpha_j}(x) \, {\rm d}\mu(x)-\sum_{j=1}^d \alpha_j S_j(\beta_j) \right)\\
    &+\sum_{j=1}^d\left(S_j(T_jh_j)- K\|G\|_{\mathcal{X}^\prime}\|h_j\|_{\mathcal{Y}_j}\right).
\end{align*}

\medskip
Note that the integral term is well-defined since $G \in \mathcal{X}^\prime$ (which is contained in $L^1(X, {\rm d} \mu)$ when
$\mu$ is a finite measure, as we have previously observed) and the $\beta_j$ are simple functions, and that the terms
$S_j(\beta_j)$ are also well-defined since the $\beta_j$ are bounded functions. The terms $S_j(T_j h_j)$ are
well-defined via the extension of $S_j$ to $\mathcal{M}(X)_+$ as discussed in Section~\ref{Preliminaries} above.
Thus $L: C \times D \to \mathbb{R} \cup \{+ \infty\}$ is well-defined
and takes values in $\mathbb{R} \cup \{+ \infty\}$, with the possible value $+\infty$ arising when $T_jh_j$ is not a
{\em bounded} measurable function.

\medskip
\noindent
We next want to see how we can apply the minimax theorem, Theorem~\ref{minimax} to this Lagrangian.
Recall from Remark~\ref{urgh} above that we have a problem in so doing, since our Lagrangian may take the value $+ \infty$,
while Theorem~\ref{minimax} requires that the Lagrangian be real-valued. To be clear, what we desire -- and what we shall indeed obtain -- is the conclusion
$$\min_{\Phi\in C}\sup_{\Psi\in D} L(\Phi, \Psi) = \sup_{\Psi\in D}\inf_{\Phi\in C} L(\Phi, \Psi)$$
of Theorem~\ref{minimax} in our case, but in order to achieve this we need to make a detour.

\subsubsection{A detour}\label{detourdetails} We now describe the necessary detour. This involves modifying the Lagrangian
we have defined in order to make it real-valued, but without altering its essential purpose. The main
technical difference is that instead of allowing $S_j$ to act on the possibly unbounded $T_jh_j$, we have it act
on an arbitrary nonnegative simple function $\psi_j$ satisfying $\psi_j \leq T_j h_j$, 

\medskip
\noindent
We therefore introduce a new Lagrangian $\Lambda: C \times \tilde{D} \to \mathbb{R}$, where $C$ is as before, and where
$$\tilde{D} = \{(\beta_j, h_j, \psi_j) \in \mathcal{S}(X)^d \times \mathcal{Y}_1 \times \dots \times \mathcal{Y}_d \times \mathcal{S}(X)^d \, : \, \beta_j \geq 0,\, h_j \geq 0,\,  0 \leq \psi_j \leq T_j h_j \}.$$
Note that $\tilde{D}$ is convex.

\medskip
\noindent
For $(K, S_j) \in C$ and $(\beta_j, h_j, \psi_j) \in \tilde{D}$ we define
\begin{align*}
    \Lambda = \, &K+\left(\int_X G(x)\prod_{j=1}^d \beta_j^{\alpha_j}(x) \, {\rm d}\mu(x)-\sum_{j=1}^d \alpha_j S_j(\beta_j) \right)\\
    &+\sum_{j=1}^d\left(S_j\psi_j- K\|G\|_{\mathcal{X}^\prime}\|h_j\|_{\mathcal{Y}_j}\right).
\end{align*}
Note that $\Lambda$ is real-valued since $S_j \psi_j$ is real-valued. Moreover, note that by the definition of the extension of $S_j$
to $\mathcal{M}(X)_+$, we have 
\begin{equation}\label{lah}
  L((K, S_j),(\beta_j, h_j)) = \sup_{\{\psi_j \, : \, \psi_j \leq T_jh_j\}} \Lambda((K, S_j),(\beta_j, h_j,\psi_j)).
  \end{equation}
We will momentarily check that the Lagrangian $\Lambda$ satisfies the hypotheses of Theorem~\ref{minimax}, but taking
this as read for now, we deduce using \eqref{lah} that 
$$  \min_{(K,S_j)\in C}\sup_{(\beta_j, h_j)\in {D}} L =  \min_{(K,S_j)\in C}\sup_{(\beta_j, h_j, \psi_j)\in \tilde{D}} \Lambda = \sup_{(\beta_j, h_j, \psi_j)\in \tilde{D}}\inf_{(K,S_j)\in C} \Lambda.$$

\medskip
\noindent
But since trivially $\sup \inf \leq \inf \sup$, we have, using \eqref{lah} once more,
$$ \sup_{(\beta_j, h_j, \psi_j)\in \tilde{D}}\inf_{(K,S_j)\in C} \Lambda \leq  \sup_{(\beta_j, h_j)\in {D}}\inf_{(K,S_j)\in C}
\sup_{\psi_j\leq T_jh_j} \Lambda =  \sup_{(\beta_j, h_j)\in {D}}\inf_{(K,S_j)\in C}L.$$
Combining the last two displays we obtain
$$ \min_{(K,S_j)\in C}\sup_{(\beta_j, h_j)\in {D}} L \leq \sup_{(\beta_j, h_j)\in {D}}\inf_{(K,S_j)\in C}L.$$
Since the reverse inequality is once again trivial we conclude that
$$\min_{\Phi\in C}\sup_{\Psi\in D} L(\Phi, \Psi) = \sup_{\Psi\in D}\inf_{\Phi\in C} L(\Phi, \Psi)$$
as we needed.

\medskip
Now we need to look at conditions (i) -- (iv) of Theorem~\ref{minimax} in our case where $\Lambda$ replaces $L$.
Concerning (i), the map $S \mapsto S(F)$ is linear on $L^\infty(X)^\ast$ for each fixed $F \in \mathcal{S}(X)$.
Therefore, for each fixed $\tilde\Psi \in \tilde{D}$, the map $\Phi \mapsto \Lambda(\Phi, \tilde\Psi)$
is affine, thus convex on $C$. Concerning (ii), the map $F \mapsto S(F)$ is linear
and hence concave on $\mathcal{S}(X)$ for each fixed $S \in L^\infty(X)^\ast_+$. Moreover the geometric mean is a concave
operation and the map $h \mapsto \|h\|_{\mathcal{Y}_j}$ is convex. Therefore, for each fixed $\Phi \in C$,
the map $\tilde\Psi \mapsto \Lambda(\Phi, \tilde\Psi)$ is concave on $\tilde{D}$. Concerning (iii), this follows directly
from the norm-continuity of $S \mapsto S(F)$ on $L^\infty(X)^\ast$ for each fixed $F \in \mathcal{S}(X)$.

\medskip
\noindent
Condition (iv) is more interesting, and it is in verification of this condition that we use the crucial strong saturation
hypothesis of Theorem~\ref{thmsimple}. We need to see that for some $\tilde\Psi_0 \in \tilde{D}$ the sublevel sets 
$\{\Phi \in C \, : \, L(\Phi,\tilde\Psi_0)\leq\lambda\}$ are 
compact for all sufficiently large $\lambda$. We will show that for a suitable choice of $\tilde\Psi_0$ these sets 
are norm-bounded, and from this the Banach--Alaoglu theorem will give us compactness. 

\medskip
\noindent
We take $\tilde\Psi_0 = (\beta_j, h_j, \psi_j)$ to have $\beta_j=0$ for all $j$.
We take $h_j \in \mathcal{Y}_j$ such that $T_j h_j \geq c_0 > 0$ a.e. on $X$, as guaranteed by the hypothesis
of Theorem~\ref{thmsimple}. By multiplying by a suitable positive constant if necessary, we can certainly assume
that $ \sum_{j=1}^d\|h_j\|_{\mathcal{Y}_j} < (2 \|G\|_{\mathcal{X}'})^{-1}$. Finally, we take $\psi_j = c_0 {\bf 1}$ which
satisfies $0 \leq \psi_j \leq T_j h_j$.

\medskip
\noindent
For such a choice of $\tilde\Psi_0$ we have 
\begin{displaymath}
    \Lambda((K,S_j), \tilde \Psi_0)=
    K\left(1-\|G\|_{\mathcal{X}^\prime}\sum_{j=1}^d\|h_j\|_{\mathcal{Y}_j}\right)
    +\sum_{j=1}^dS_j \psi_j \geq K/2 + c_0 \sum_{j=1}^dS_j {\bf 1}.
\end{displaymath}

Therefore, for given $\lambda > 0$,
$$ \{(K, S_j) \in C \, : \,  \Lambda((K,S_j), \tilde \Psi_0) \leq \lambda\} \subseteq
[0, 2 \lambda] \times \{S \in L^\infty(X)^\ast_+ \, : S ({\bf 1}) \leq c_0^{-1}\lambda\}^d.$$

\medskip
\noindent
But it is easy to see that for $S \geq 0$, $S({\bf 1}) = \|S\|_{L^\infty(X)^\ast}$. Indeed, by definition we have
$$\|S\|_{L^\infty(X)^\ast}=\sup\{|S(u)| \, : \, u\in L^\infty(X),\|u\|_\infty\leq 1\}.$$
So let us take such a function $u$ with $\|u\|_\infty \leq 1$. Since $S(-u)=-S(u)$ we may by choosing either $u$ or $-u$
assume that $S(u)\geq 0$. We have that $u\leq 1$ a.e. and therefore the non-negativity
of $S$ gives us that $S(u)\leq S({\bf 1})$, as needed.

\medskip
\noindent
Therefore 
$$ \{(K, S_j) \in C \, : \,  \Lambda((K,S_j), \tilde \Psi_0) \leq \lambda\} \subseteq
[0, 2 \lambda] \times \{S \, : \|S\|\leq c_0^{-1}\lambda\}^d$$
is a norm-bounded, weak-star closed, hence weak-star compact subset of $\mathbb{R} \times (L^\infty(X)^\ast)^d$,
by the Banach--Alaoglu theorem. This completes the verification of condition (iv) of Theorem~\ref{minimax} in our case,
and we conclude that
$$\min_{\Phi\in C}\sup_{\Psi\in D} L(\Phi, \Psi) = \sup_{\Psi\in D}\inf_{\Phi\in C} L(\Phi, \Psi).$$

\subsubsection{Return to the main argument}
We may therefore conclude, by Theorem~\ref{minimax}, that if for non-zero $G \in \mathcal{X}^\prime $ fixed
we define\footnote{The notation here is perhaps confusing. We shall consider
four problems: $\gamma$, $\gamma^\ast$, $\gamma_\mathcal{L}$ and $\gamma_\mathcal{L}^\ast$. When there is no 
superscript we are dealing with the variant of the problem pertaining to $L^1$, 
and presence of the superscript $\ast$ denotes that we are dealing with the 
variant of the problem which pertains to $(L^\infty)^\ast$; when there is no 
subscript we are dealing with the original version of the problem, and presence 
of the subscript $\mathcal{L}$ denotes that we are dealing with the Lagrangian formulation. 
This is consistent with the notation we adopted in the treatment of the finite discrete case above; 
in that case there was no distinction between $L^1$ and $(L^\infty)^\ast$. We do not adorn $\eta$ with either
a superscript $*$ nor a subscript $\mathcal{L}$ since there is only one $\eta$-problem. Nevertheless we emphasise that
the $\eta$-problem does indeed deal with the Langrangian formulation in the form pertaining to $(L^\infty)^\ast$.} 

$$\gamma_\mathcal{L}^\ast = \inf_{(K,S_j) \in C}\sup_{(\beta_j,h_j) \in D} L
\qquad\text{and}\qquad
\eta = \sup_{(\beta_j,h_j) \in D}\inf_{(K,S_j) \in C} L$$
then $\eta = {\gamma}_\mathcal{L}^\ast$ and the infimum in the problem for $\gamma_\mathcal{L}^\ast$ is
achieved as a minimum. (It should be noted that we are not yet in a position to assert the finiteness
of either of these numbers.)

\bigskip
In order to progress further, we shall also consider the problem
\begin{equation}
    \label{convprob}
\begin{aligned}
\gamma =& \inf K\\
\text{such that}\quad &
G(x)\leq \prod_{j=1}^d S_j(x)^{\alpha_j}\quad\text{a.e. on $X$, and}\\
&
\int_X S_j(x) T_j h_j(x) \, {\rm d}\mu(x) \leq K \|G\|_{\mathcal{X}^\prime} \|h_j\|_{\mathcal{Y}_j} \quad\text{for all $j$ and 
all $h_j \in \mathcal{Y}_j$}\\
\end{aligned}
\end{equation}
where the $S_j$ are taken to be in $L^1(X, {\rm d \mu)}$. We emphasise that this is the problem we really want to solve:
if we can prove that $\gamma \leq A$ and that minimisers exist, we will have our desired factorisation. Nevertheless,
we should point out that it is not yet even clear that there exist $(K, S_j)$ satisfying the constraints
of \eqref{convprob}. We shall be able to infer the existence of such $(K, S_j)$, and hence the finiteness of $\gamma$, 
only from the conclusion of Theorem~\ref{thmsimple}.

\medskip
Our strategy is to show that (i) $\gamma_\mathcal{L}^\ast = \gamma$ and that if the problem for $\gamma_\mathcal{L}^\ast$
admits minimisers $\Phi$, then the problem for $\gamma$ also admits minimisers; and (ii) $0 \leq \eta \leq A$.
Combining these with the minimax result $\gamma_\mathcal{L}^\ast=\eta$ and existence of minimsiers for
$\gamma_\mathcal{L}^\ast$, we can conclude that the problem for $\gamma$ admits
minimisers and that $\gamma \leq A$,
which will conclude the proof of Theorem~\ref{thmsimple}.

\medskip
\textbf{Proof that $\gamma = \gamma_\mathcal{L}^\ast$ and that existence of minimisers for $\gamma_\mathcal{L}^\ast$ implies existence of
  minimisers for $\gamma$.}
We begin by studying $\gamma_\mathcal{L}^\ast$ so
let us consider for which $(K,S_j) \in C$ we have
$\sup_{\beta_j,h_j} L < \infty$. Fix $(K, S_j)$. 
First of all, 
suppose that $S_j$ are such that there exists a tuple $(\beta_j)$ such that
\begin{displaymath}
    \int_X G(x)\prod_{j=1}^d \beta_j^{\alpha_j}(x)\, {\rm d}\mu(x)-\sum_{j=1}^d\alpha_jS_j(\beta_j)>0.
\end{displaymath}
Then by setting $h_j=0$ and substituting $\beta_j \mapsto t \beta_j$ and letting $t \to \infty$
we see that the supremum is infinite. Therefore, if $\sup_{\beta_j,h_j} L < \infty$, we must have
\begin{equation}
    \label{condbeta}
    \int_X G(x)\prod_{j=1}^d \beta_j^{\alpha_j}(x) \, {\rm d}\mu(x)\leq\sum_{j=1}^d\alpha_jS_j(\beta_j)
    \qquad\text{for all simple functions $\beta_j$,}
\end{equation}
which, by Lemma~\ref{abscont}, is equivalent to
\begin{equation}\label{eqnabscont}
G(x)\leq \prod_{j=1}^d {S}_{j\mathrm{rn}}(x)^{\alpha_j}\quad\text{a.e. on $X$. }
\end{equation}

\medskip
Now assume there exists a $j_0$ 
and an $h_{j_0} \in \mathcal{Y}_{j_0}$
such that $S_{j_0}(T_{j_0}h_{j_0})>K\|G\|_{\mathcal{X}^\prime}\|h_{j_0}\|_{\mathcal{Y}_{j_0}}$.
Taking $\beta_j = 0$,
and $h_j= 0$ for $j\neq j_0$ and multiplying $h_{j_0}$ by a factor $t$ which
we send to infinity we again see that the supremum is infinite.
Therefore, if $\sup_{\beta_j,h_j} L < \infty$, then
we must also have
\begin{equation}
    \label{condScircT}
    S_{j}(T_{j}h_{j})\leq K\|G\|_{\mathcal{X}^\prime}\|h_j\|_{\mathcal{Y}_j}
\end{equation}
for all nonnegative $h_j$ and all $j$.
From the positivity of $S_{j\mathrm{pfa}}$ we see that this implies
\begin{equation}\label{condTastS}
    \int_X S_{j\mathrm{rn}}(x)T_jh_j(x)\, {\rm d}\mu(x)\leq K\|G\|_{\mathcal{X}^\prime}\|h_j\|_{\mathcal{Y}_j}
\end{equation}
for all nonnegative $h_j$ and all $j$.

\medskip
On the other hand, if for fixed $(K, S_j)$ conditions~\eqref{eqnabscont}
and~\eqref{condScircT}
are satisfied, then when we are looking for $\sup_{\beta_j,h_j} L$,
we can do no better than taking $\beta_j =0$ and $h_j=0$
for all $j$. So for fixed $(K, S_j)$, we have $\sup_{\beta_j,h_j} L < \infty$
if and only if conditions ~\eqref{eqnabscont} and~\eqref{condScircT} hold, in which case 
$\sup_{\beta_j,h_j} L = K$. So the problem for $\gamma_\mathcal{L}^\ast$ is identical with the 
problem
\begin{align*}
    \gamma^\ast =& \inf K\\
\text{such that}\quad &
G(x)\leq \prod_j S_{j\mathrm{rn}}(x)^{\alpha_j}\qquad\text{a.e.},\\
& S_{j}(T_{j}h_{j})\leq K\|G\|_{\mathcal{X}^\prime}\|h_j\|_{\mathcal{Y}_j}\quad
    \text{for all $j$ and all $h_j \in \mathcal{Y}_j$,}
\end{align*}
where we emphasise that the $\inf$ is taken over $(K, S_j)$ with $S_j \in L^\infty(X)_+^\ast$.

\medskip
Likewise, the problem 
$$\gamma_\mathcal{L} := \inf_{(K,S_j) \in \mathbb{R}_+ \times (L^1(X)_+)^d}\sup_{\beta_j, h_j}L$$
is identical with problem~\eqref{convprob} for $\gamma$.

\medskip
It is clear that $\gamma_\mathcal{L}^\ast \leq \gamma_\mathcal{L}$ as the infimum for the left hand side is over a 
larger set than for the right hand side.

\medskip
{\bf Claim:} $\gamma_\mathcal{L} \leq \gamma_\mathcal{L}^\ast$, and if minimisers $\Phi = (K, S_j)$ exist for problem $\gamma_\mathcal{L}^\ast$,
they also exist for problem $\gamma_\mathcal{L}$.

\medskip
Indeed, assume that $\gamma_\mathcal{L}^\ast < \infty$, let $\varepsilon > 0$ and let $(K, S_j)$ with $S_j \in L^\infty(X)^\ast$
and satisfying conditions~\eqref{eqnabscont} and~\eqref{condScircT} be such that $K < \gamma_\mathcal{L}^\ast + \varepsilon$. 
Then the absolutely continuous component $S_{j\mathrm{rn}}$ satisfies~\eqref{eqnabscont} and~\eqref{condTastS}, and so
$(K,S_{j\mathrm{rn}})$ contributes to the infimum in the problem for $\gamma_\mathcal{L}$. Thus 
$\gamma_\mathcal{L} \leq \gamma_\mathcal{L}^\ast + \varepsilon$. Letting $\varepsilon \to 0$ establishes the first part of 
the claim.
Now suppose that minimisers $\Phi = (K, S_j)$ exist for problem $\gamma_\mathcal{L}^\ast$. In particular this supposes that
$\gamma_\mathcal{L}^\ast < \infty$. Let $(K, S_j)$ with $S_j \in L^\infty(X)^\ast$
and satisfying conditions~\eqref{eqnabscont} and~\eqref{condScircT} be such that $K = \gamma_\mathcal{L}^\ast$. Then the
absolutely continuous component $S_{j\mathrm{rn}}$ satisfies~\eqref{eqnabscont} and~\eqref{condTastS}, and so
$(K,S_{j\mathrm{rn}})$ contributes to and indeed achieves the infimum in the problem for $\gamma_\mathcal{L}$ (otherwise
$\gamma_\mathcal{L}$ would be strictly less than $\gamma_\mathcal{L}^\ast$).

\medskip Summarising, the problems for $\gamma$ and $\gamma_\mathcal{L}$ are equivalent; the problems for
$\gamma^\ast$ and $\gamma_\mathcal{L}^\ast$ are equivalent; $\gamma_\mathcal{L} = \gamma_\mathcal{L}^\ast$, and if extremisers exist for
$\gamma_\mathcal{L}^\ast$, they also exist for $\gamma_\mathcal{L}$, and hence too for $\gamma$.
 
\medskip
\textbf {Proof that $0 \leq \eta \leq A$.}
We wish to carry out a similar analysis for $\inf_{K,S_j} L$, and for that
we first of all rewrite $L$ as
\begin{align*}
    L=&\int_X G(x)\prod_{j=1}^d \beta_j^{\alpha_j}(x) \, {\rm d}\mu(x)
    +K\left(1-\|G\|_{\mathcal{X}^\prime}\sum_{j=1}^d\|h_j\|_{\mathcal{Y}_j}\right)\\
    &+\sum_{j=1}^dS_j\left(T_jh_j-\alpha_j\beta_j \right).
\end{align*}
We consider for which $(\beta_j,h_j) \in D$ we have $\inf_{K,S_j} L >-\infty$.

\medskip
First, by taking $S_j=0$ for all $j=1,\dots,d$ and
letting $K$ to go infinity 
we see that if $\inf_{K,S_j} L > -\infty$
then we must have
\begin{equation}
    \label{condsumhj}
    \|G\|_{\mathcal{X}^\prime}\sum_{j=1}^d \|h_j\|_{\mathcal{Y}_j}\leq1.
\end{equation}

Secondly, assume that there exists an index $j_0$ and a
set $E\subseteq X$
with $\mu(E)>0$ such that $T_{j_0}h_{j_0}(x)< \alpha_{j_0} \beta_{j_0}(x)$ for a.e. $x\in E$.
Then by taking $S_{j_0}= t \chi_E\in L^1$, $S_j=0$ for $j\neq j_0$ and
$K=0$ and letting $t \to \infty$, then we see that $\inf_{K,S_j}L =-\infty$.
Thus if $\inf_{K,S_j} L > -\infty$, we must also have
\begin{equation}
    \alpha_j\beta_j(x)\leq T_jh_j(x)\quad\text{a.e. on $X$ for all $j$.}
    \label{condThbeta}
\end{equation}

If conditions \eqref{condsumhj} and \eqref{condThbeta} are both
satisfied we can do no better than take $K=0$ and $S_j=0$ for all $j$. 
So, for fixed $(\beta_j,h_j)$, $\inf_{K,S_j} L >-\infty$ if and only if conditions
\eqref{condsumhj} and \eqref{condThbeta} hold, in which case $\inf_{K,S_j} L = 
\int_X G(x)\prod_{j=1}^d \beta_j^{\alpha_j}(x) {\rm d}\mu(x)$.
We can always find $(\beta_j,h_j)$ such that conditions \eqref{condsumhj} and \eqref{condThbeta} hold,
so $\eta = \sup_{\beta_j, h_j} \int_X G(x)\prod_{j=1}^d \beta_j^{\alpha_j}(x) {\rm d}\mu(x)$ subject to 
conditions \eqref{condsumhj} and \eqref{condThbeta}. In particular this tells us that $\eta \geq 0$.

\medskip
Let us now derive an upper bound for $\eta$.
Examining the condition \eqref{condThbeta} on $\beta_j$ we see that
\begin{align*}
\eta \leq& \sup_{h_j} \int_{X} G(x)
\prod_{j=1}^d \left(\alpha_j^{-1}T_{j}h_j(x)\right)^{\alpha_j} \, {\rm d}\mu(x)\\
\text{such that}\quad & 
\|G\|_{\mathcal{X}^\prime }\sum_{j=1}^d \|h_j\|_{\mathcal{Y}_j}\leq1.
\end{align*}
Clearly there exist functions $h_j \in \mathcal{Y}_j$ such that 
$\|G\|_{\mathcal{X}^\prime}\sum_j \|h_j\|_{\mathcal{Y}_j}\leq1$, and for any such we have
\begin{align*}
\int_X G(x) \prod_{j=1}^d \left(\alpha_j^{-1} T_jh_j(x)\right)^{\alpha_j} \, {\rm d}\mu(x)
&\leq \|G\|_{\mathcal{X}^\prime}
\|
\prod_{j=1}^d \left(T_j(\alpha_j^{-1} h_j)(x)\right)^{\alpha_j } \|_\mathcal{X}
\\
&\leq \|G\|_{\mathcal{X}^\prime}
A\prod_{j=1}^d\|\alpha_j^{-1}h_j\|_{\mathcal{Y}_j}^{\alpha_j}
\\
&\leq \|G\|_{\mathcal{X}^\prime}
A\sum_{j=1}^d\alpha_j\| \alpha_j^{-1}h_j\|_{\mathcal{Y}_j}
\\
&= \|G\|_{\mathcal{X}^\prime}
A\sum_{j=1}^d\|h_j\|_{\mathcal{Y}_j}\leq A
\end{align*}
by H\"older's inequality in the form $\int Gf \leq \|G\|_{\mathcal{X}'} \|f\|_{\mathcal{X}}$,
the multilinear inequality \eqref{mainineq} which is our main hypothesis,
the arithmetic-geometric mean inequality and finally the assumption on the $h_j$.
This clearly implies $\eta \leq A$, and thus concludes the proof of  Theorem~\ref{thmsimple}.

\qed

\subsection{Consequences of saturation}\label{tsjkr}
We give two lemmas needed for Theorem~\ref{thmmain}. These allow us
to construct suitable
exhausting sequences of subsets of $X$ of finite measure, in order that we might apply Theorem~\ref{thmsimple}. Then
we construct the weight $w$ of the statement of Theorem~\ref{thmmain}.

\begin{lem}\label{strsat}
  Let $(X, {\rm d} \mu)$ be a $\sigma$-finite measure space, and suppose that $\mathcal{P} \subseteq \mathcal{M}(X)_+$ has the property that for every measurable set $E \subseteq X$ with $\mu(E) > 0$, there exists an $f \in \mathcal{P}$ and a subset $E' \subseteq E$ with $\mu(E')>0$, such that $f > 0$ a.e. on $E'$. Then there exists a countable subset $\{f_n\}_{n \in \mathbb{N}} \subseteq \mathcal{P}$ such that, with $E_n := \{x \in X \, : \, f_n(x) > 0\}$,
  $$ \mu(X\setminus\bigcup_{n=1}^\infty E_n) = 0.$$
  \end{lem}

\begin{proof}
  By exhausting $X$ by a countable sequence of subsets, each of finite measure, we may assume that $\mu(X)$ is finite. We claim that for every $\epsilon >0$ there is a finite subset $\{f_1, \dots ,f_N\} \subseteq \mathcal{P}$ such that
  $$  \mu(X\setminus\bigcup_{n=1}^N E_n) < \epsilon.$$
  Once we have this claim, we take the union of the finite subsets of $\mathcal{P}$ obtained for each $\epsilon = 1/m$, $m \in \mathbb{N}$, and we are finished.

  \medskip
  Suppose, for a contradiction, that there is some $\epsilon > 0$ such that for all $N$, for all finite subfamilies $\{f_1, \dots , f_N\} \subseteq \mathcal{P}$ we have
  $$ \mu(X \setminus \bigcup_{n=1}^N E_n) \geq \epsilon > 0.$$
  Let
  $$ t = \inf_N \inf_{\{ f_1, \dots , f_N\} \subseteq \mathcal{P}}\, \mu(X \setminus \bigcup_{n=1}^N E_n).$$
  Then $t\geq \epsilon >0$ and also $t < \infty$ since $\mu(X)$ is finite.
  For $m \in \mathbb{N}$ let $\mathcal{P}_m = \{f_1, \dots , f_{N(m)}\}$ be such that 
  $$ \mu(X \setminus \bigcup_{n=1}^{N(m)} E_n) \leq t + 1/m;$$
  we may assume that  $\mathcal{P}_m \subseteq \mathcal{P}_{m+1}$ for all $m$. Letting $m \to \infty$ we obtain
  $$ \mu(X \setminus \bigcup_{n=1}^{\infty} E_n) \leq t.$$
  If $ \mu(X \setminus \bigcup_{n=1}^{\infty} E_n) =0$ we are done; otherwise $E = X \setminus \bigcup_{n=1}^{\infty} E_n$ has positive measure, and therefore, by hypothesis, there is a subset $E' \subseteq E$ with $\mu (E') = \delta > 0$ such that for some $f_0 \in \mathcal{P}$ we have $E' \subseteq E_0$. Then, (with the union now starting at $n=0$),
  $$ \mu(X \setminus \bigcup_{n=0}^{N(m)} E_n ) \leq t + 1/m - \delta.$$
  If we choose $m > \delta^{-1}$, we then have
   $$ \mu(X \setminus \bigcup_{n=0}^{N(m)} E_n ) < t,$$
  in contradiction to the definition of $t$.
  
\end{proof}

\begin{lem}\label{sat}
  Let $(X, {\rm d} \mu)$ be a $\sigma$-finite measure space, $\mathcal{Y}$ a normed lattice, and suppose that $T: \mathcal{Y} \to \mathcal{M}(X)$ is a positive linear operator which saturates $X$. Then there is an increasing 
  exhausting sequence of subsets $(G_n)$ of $X$, each of finite measure, such that $T$
  strongly saturates each $G_n$. More precisely, there exists a sequence $(h_n) \subseteq \mathcal{Y}_+$ such that $h_{n+1} \geq h_n$ for all $n$, such that $\|h_n\|_\mathcal{Y} \leq 1$ for all $n$, and such that for all $n$, $Th_n(x) \geq 1/n$ for $x \in G_n$. 
\end{lem}

\begin{proof}
  Let $\mathcal{P} = T(\mathcal{Y}_+) \subseteq \mathcal{M}(X)_+$. The saturation hypothesis allows us to deduce from the previous lemma that there exists a sequence $h_n \in \mathcal{Y}_+$
  such that if $E_n = \{Th_n>0\}$, then $\{E_n\}_{n\in \mathbb{N}}$ covers $X$ up to a set of
  measure zero. Letting
  $$ \tilde{h}_n = 2^{-1}\frac{h_1}{\|h_1\|_{\mathcal{Y}}} + 2^{-2} \frac{h_2}{\|h_2\|_{\mathcal{Y}}} + \cdots + 2^{-n}\frac{h_n}{\|h_n\|_{\mathcal{Y}}},$$
  we see that we may additionally assume that $\|h_n\|_{\mathcal{Y}} \leq 1$
  for all $n$, and that $h_{n+1} \geq h_n$. Thus without loss of generality
  $E_n \subseteq E_{n+1}$ for all $n \in \mathbb{N}$.  

  \medskip
  Since $X$ is $\sigma$-finite there is an increasing sequence of subsets $F_n$
  of finite measure which exhausts $X$. Now we set
  $$G_n  := \{ x \, : \, Th_n > 1/n \} \cap F_n.$$
  Clearly $G_n \subseteq G_{n+1}$ for all $n$, and each $G_n$ has finite measure.
  We check that $\{G_n\}$ is exhausting.
  Let $x \in X$. Then $x \in E_k$ for some $k$, i.e. $Th_k(x) >0$, and therefore
  $Th_k(x) > 1/l$ for some $l \in \mathbb{N}$. Since $X$ is exhausted by $\{F_m\}$
  there is some $m$ such that $x \in F_m$. Therefore, for $n$ such that
  $n \geq \max\{k,l.m\}$, we have that $x \in G_n$. Finally, it is clear by definition
  that $T$ strongly saturates $G_n$. 
  
  \end{proof}

\medskip
\noindent
As an immediate consequence, we have:

\begin{cor}\label{corp}
Let $(X, {\rm d} \mu)$ be a $\sigma$-finite measure space and let $\mathcal{Y}_j$ be normed lattices for $1 \leq j \leq d$. Assume that $T_j: \mathcal{Y}_j \to \mathcal{M}(X)$ for $1 \leq j \leq d$ are positive linear operators, each of which saturates $X$.
Then for each $1 \leq j \leq d$ there exists a sequence $(h_{j,n})_n \subset \mathcal{Y}_j$
such that $\|h_{j,n}\|_{\mathcal{Y}_j} \leq 1$, $h_{j,n} \leq h_{j,m}$ for $m \geq n$, and there exists an increasing and exhausting sequence of
subsets $E_n \subseteq X$, each of finite measure,
such that for each $j$ and $n$, $T_j h_{j,n}(x) \geq 1/n$  for $x \in E_n$.
\end{cor}

\medskip
\noindent
With this in hand, we can now define the weight $w$ referred to in Remark~\ref{weightrem} above. Let $w_j(x)$ for $x \in E_m \setminus E_{m-1}$ be $T_j h_{j,m}(x)$, where we take $E_0 = \emptyset$. Define $w(x) = \min_j w_j(x)$. Note that $w$ is a.e. positive and a.e. finite. (If the sets $E_m$ stabilise in the sense that for some $M \in \mathbb{N}$, $E_M = X$ up to a set of measure zero, then $w \geq 1/M$, and we can simply take $w$ to be $1$).

\subsection{Proof of Theorem~\ref{thmmain}}\label{tsjk}
We will prove Theorem~\ref{thmmain} by reducing it to Theorem~\ref{thmsimple}.
We will need the following lemma whose proof is an easy exercise in elementary
point-set topology, and which is therefore omitted.

\begin{lem}\label{top}
  Let $Z$ be a compact topological space and suppose $(z_n)$ is an infinite sequence of distinct points in $Z$.
  Then there
  exists a point $z \in Z$ such that every open neighbourhood of $z$ contains infinitely many $z_n$'s.
  \end{lem}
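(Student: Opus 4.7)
The plan is to prove the contrapositive by a direct compactness argument. Suppose no such accumulation point $z$ exists. Then for every $z \in Z$ there is an open neighbourhood $U_z$ of $z$ which contains only finitely many terms of the sequence $(z_n)$ (where we regard the sequence as a set of distinct points, so ``contains finitely many $z_n$'s'' means the set $\{n : z_n \in U_z\}$ is finite).

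The family $\{U_z : z \in Z\}$ is an open cover of $Z$. By compactness of $Z$ there is a finite subcover $U_{z^{(1)}}, \ldots, U_{z^{(k)}}$. Each $U_{z^{(i)}}$ contains only finitely many of the $z_n$'s, so the union $\bigcup_{i=1}^k U_{z^{(i)}} = Z$ also contains only finitely many of the $z_n$'s. But the $z_n$ are distinct points of $Z$ and there are infinitely many of them, a contradiction. Hence the desired $z \in Z$ exists.

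There is no real obstacle here; the only small point to be careful about is the bookkeeping that ``each neighbourhood contains only finitely many $z_n$'s'' really does give a contradiction with $Z$ containing the infinite set $\{z_n : n \in \mathbb{N}\}$, which relies on the $z_n$ being distinct. Note that no separation axiom is needed on $Z$, only compactness.
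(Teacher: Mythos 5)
Your proof is correct, and since the paper deliberately omits the argument (``an easy exercise in elementary point-set topology''), there is no in-text proof to compare against; yours is exactly the standard compactness argument one would expect. One minor remark: the distinctness of the $z_n$ is not actually needed if one consistently counts \emph{indices} rather than \emph{points} --- if every $z$ had a neighbourhood $U_z$ with $\{n : z_n \in U_z\}$ finite, a finite subcover would force $\mathbb{N} = \{n : z_n \in Z\}$ to be a finite union of finite sets, already a contradiction. Your version, which passes through the infinitude of the set $\{z_n\}$, uses distinctness at that step, and that is perfectly fine given the hypotheses of the lemma as stated.
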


\medskip
{\em Proof of Theorem~\ref{thmmain}}.
We may assume that $A < \infty$ otherwise there is nothing to prove. Take a nonzero $G \in \mathcal{X}^\prime$, and take
$E_n$ as in Corollary~\ref{corp}. For each $m$ we can apply Theorem~\ref{thmsimple}, with $X$ replaced by 
$E_m$, to conclude that there exist $g_{j,m} \in L^1(E_m, {\rm d} \mu)$ such that 
\begin{equation}\label{kjh}
G(x) \leq \prod_{j=1}^d g_{j,m}(x)^{\alpha_j}\qquad\mbox{a.e. on $E_m$,}
\end{equation}
and such that for each $j$, 
\begin{equation}\label{control1b}
\int_{E_m} g_{j,m}(x)T_jf_j(x) {\rm d}\mu(x) \leq A\|G\|_{\mathcal{X}^\prime}\|f_j\|_{\mathcal{Y}_j}
\end{equation}
for all $f_j \in \mathcal{Y}_j$.

\medskip
\noindent
If $E_M = X$ (up to a set of zero measure) for some $M$, we simply
take $g_j = g_{j,M}$ and we are finished. So we may assume that the
sets $E_m$ do not stabilise, and therefore that there are infinitely
many distinct $g_{j,m}$ for each $j$.

\medskip
\noindent
With $w$ defined as in the previous subsection, let us now calculate
\begin{align*}
\|g_{j,m}\|_{L^1(w \,{\rm d}\mu)} &\leq \int_{{E}_m} g_{j,m}(x)\,w_j (x)\, {\rm d}\mu(x)
=\sum_{n=0}^m\int_{{E}_n\setminus {E}_{n-1}} g_{j,m}(x)\,T_j h_{j,n}(x) \, {\rm d}\mu(x)\\
&\leq \int_{{E}_m} g_{j,m}(x)\, T_j h_{j,m}(x) \, {\rm d}\mu(x)
\leq A \|G\|_{\mathcal{X}^\prime}\|h_{j,m}\|_{\mathcal{Y}_j}\leq A\|G\|_{\mathcal{X}^\prime}.
\end{align*}
Thus the functions $g_{j,m}$ all lie in a ball in $L^\infty(X, \, w\, {\rm d}\mu)^\ast$
which, by the Banach--Alaoglu theorem, is weak-star compact. It is therefore tempting to extract a weak-star
convergent subsequence. However, we must resist this temptation since $L^\infty(X, \, w\, {\rm d}\mu)$ is not separable,
and thus  $L^\infty(X, \, w\, {\rm d}\mu)^\ast$ is not metrisable. We therefore proceed with some caution.
We will use Lemma~\ref{top} as a substitute for the existence of weak-star convergent subsequences.

\medskip
\noindent
It is convenient to consider the vectors
$${\bf g}_n = (g_{1,n}, \dots , g_{d,n}) \in L^1(X, w \,{\rm d}\mu) \times \dots \times L^1(X, w \,{\rm d}\mu)$$
$$  \subseteq L^\infty(X, \, w\, {\rm d}\mu)^\ast \times \dots \times L^\infty(X, \, w\, {\rm d}\mu)^\ast = (L^\infty(X, \, w\, {\rm d}\mu) \times \dots \times L^\infty(X, \, w\, {\rm d}\mu))^\ast.$$
By Lemma~\ref{top} there is a point ${\bf S} = (S_1, \dots , S_d) \in (L^\infty(X, \, w\, {\rm d}\mu) \times \dots \times L^\infty(X, \, w\, {\rm d}\mu))^\ast$ such that every weak-star open neighbourhood of  ${\bf S}$ contains infinitely many of the ${\bf g}_n$.

\begin{lem}\label{vector}
  Suppose $({\bf g}_n)$ and ${\bf S}$ are as above.

  \medskip
  \noindent
  (a) If for some ${\bf q} \in \mathcal{M}(X, w \, {\rm d} \mu)_+^d$ we have
  $$ {\bf g}_n ({\bf q}) = \sum_{j=1}^d \int_X g_{j,n} q_j \, w \, {\rm d} \mu = \int_X {\bf g}_n \cdot {\bf q}
  \, w \, {\rm d} \mu \leq K$$ for all sufficiently large $n$, then
  $$ {\bf S}({\bf q}) \leq K.$$

  \medskip
  \noindent
  (b) If 
  for some ${\bf q} \in L^\infty(X, w \, {\rm d} \mu)^d$ we have
  $$  {\bf g}_n ({\bf q}) = \sum_{j=1}^d \int_X g_{j,n} q_j w \, {\rm d} \mu = \int_X {\bf g}_n \cdot {\bf q}
  \, w \, {\rm d} \mu \geq L$$ for all sufficiently large $n$, then
  $$ {\bf S}({\bf q}) \geq L.$$
  \end{lem}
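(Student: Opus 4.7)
The plan is to read both statements off from the defining property of $\mathbf{S}$: every weak-star open neighbourhood of $\mathbf{S}$ contains infinitely many of the vectors $\mathbf{g}_n$. For part (b), since $\mathbf{q}\in L^\infty(X, w\,{\rm d}\mu)^d$, the evaluation map $\mathbf{R}\mapsto \mathbf{R}(\mathbf{q})$ is weak-star continuous on the dual, so the half-space $U_L=\{\mathbf{R}:\mathbf{R}(\mathbf{q})<L\}$ is a sub-basic weak-star open set. Were it the case that $\mathbf{S}(\mathbf{q})<L$, then $\mathbf{S}\in U_L$, but the hypothesis $\mathbf{g}_n(\mathbf{q})\geq L$ for all large $n$ says that only finitely many $\mathbf{g}_n$ can lie in $U_L$, contradicting the accumulation property of $\mathbf{S}$. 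Hence $\mathbf{S}(\mathbf{q})\geq L$.

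Part (a) is more delicate because $\mathbf{q}\in\mathcal{M}(X, w\,{\rm d}\mu)_+^d$ need not be bounded, so $\mathbf{R}\mapsto\mathbf{R}(\mathbf{q})$ is in general only lower semicontinuous, in the spirit of Lemma~\ref{lsc}, and not weak-star continuous. The strategy is to approximate $\mathbf{q}$ from below by bounded test vectors and reduce matters to the (b)-type argument. Each $\mathbf{g}_n$ is nonnegative (it lies in $L^1(X, w\,{\rm d}\mu)_+^d$), and since the positive cone is weak-star closed, the accumulation point $\mathbf{S}$ is nonnegative as well. The extension to nonnegative measurable arguments introduced in Section~\ref{Preliminaries} therefore gives
$$ \mathbf{S}(\mathbf{q}) \;=\; \sup\bigl\{\mathbf{S}(\mathbf{q}'):\mathbf{q}'\in L^\infty(X, w\,{\rm d}\mu)^d,\ 0\leq \mathbf{q}'\leq \mathbf{q}\bigr\}.$$

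For any such bounded test vector $\mathbf{q}'$, the pointwise inequality $0\leq\mathbf{q}'\leq\mathbf{q}$ combined with the nonnegativity of $\mathbf{g}_n$ gives $\mathbf{g}_n(\mathbf{q}')\leq \mathbf{g}_n(\mathbf{q})\leq K$ for all sufficiently large $n$. Now I run the (b)-argument in reverse: if $\mathbf{S}(\mathbf{q}')>K$, then $V_K=\{\mathbf{R}:\mathbf{R}(\mathbf{q}')>K\}$ is a weak-star open neighbourhood of $\mathbf{S}$ containing only finitely many $\mathbf{g}_n$, contradicting the accumulation property. Hence $\mathbf{S}(\mathbf{q}')\leq K$ for every admissible $\mathbf{q}'$, and taking the supremum yields the desired $\mathbf{S}(\mathbf{q})\leq K$.

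The only mild subtlety is the translation of ``for all sufficiently large $n$'' in the hypothesis into ``only finitely many $\mathbf{g}_n$'' in the topological contradiction; this is precisely licensed by the property of $\mathbf{S}$ from Lemma~\ref{top}, which rules out any open neighbourhood of $\mathbf{S}$ missing all but finitely many terms of the sequence. Beyond that, the argument is a routine combination of nonnegativity, weak-star continuity against $L^\infty$ elements, and the monotone extension of $\mathbf{S}$ to unbounded nonnegative measurable vectors.
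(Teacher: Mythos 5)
Your proposal is correct. Part (b) coincides with the paper's argument. Part (a) takes a mild variant of the paper's route: the paper invokes Lemma~\ref{lsc} (weak-star lower semicontinuity of $\mathbf{R}\mapsto\mathbf{R}(\mathbf{q})$ for unbounded nonnegative $\mathbf{q}$) to get that the superlevel set $\{\mathbf{R}:\mathbf{R}(\mathbf{q})>(K+K')/2\}$ is weak-star open, and then runs a one-step contradiction with the unbounded test vector directly. You instead approximate $\mathbf{q}$ from below by bounded $\mathbf{q}'$, dispose of each $\mathbf{q}'$ by the weak-star continuous part-(b) mechanism, and take a supremum via the monotone extension of $\mathbf{S}$. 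Since the proof of Lemma~\ref{lsc} itself proceeds by exactly this bounded-from-below approximation, the two routes are really the same idea with the approximation either encapsulated as a lemma or inlined. One small advantage of your presentation is that it makes the nonnegativity of $\mathbf{S}$ explicit (weak-star closedness of the positive cone forces the accumulation point into the cone), a fact the paper uses but leaves implicit both here and later when appealing to positivity of the Yosida--Hewitt components of the $S_j$; you need it both to make sense of the extension $\mathbf{S}(\mathbf{q})$ and to justify the monotonicity $\mathbf{S}(\mathbf{q}')\leq\mathbf{S}(\mathbf{q})$, and you supply the justification.
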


\begin{proof}
  (a) Suppose for a contradiction that ${\bf S}({\bf q}) \geq K'> K$ for some finite $K'$.
  Let
  $$U = \{{\bf R} \in ((L^\infty(w \, {\rm d} \mu))^d)^* \, : \; {\bf R} ({\bf q}) > (K + K')/2 \}.$$
  Then $ {\bf S} \in U$, and $U$ is weak-star open since for each ${\bf q} \in \mathcal{M}(w \,{\rm d} \mu)_+^d$ the
  functional $ {\bf R} \mapsto {\bf R} ({\bf q})$ is weak-star lower semicontinuous, by (a vector-valued version of)
  Lemma~\ref{lsc}. Thus $U$ is an open neighbourhood of ${\bf S}$ in the weak-star topology.
  By the above remarks, $U$ must contain infinitely many of the $({\bf g}_n)$.
  But for all $n$ sufficiently large,
  $$ {\bf g}_n ({\bf q}) = \int_X {\bf g}_n \cdot {\bf q} \, w \, {\rm d}\mu \leq K < (K + K')/2$$
  and so none of these ${\bf g}_n$ can be in $U$. This is a contradiction, and therefore ${\bf S}({\bf q}) \leq K$.

  \medskip
  \noindent
  (b) Suppose for a contradiction that ${\bf S}({\bf q})= L' < L$.
  Let
  $$U = \{{\bf R} \in ((L^\infty(w \, {\rm d} \mu))^d)^* \, : \; {\bf R} ({\bf q}) < (L + L')/2 \}.$$
  Then $ {\bf S} \in U$, and $U$ is weak-star open since for each ${\bf q} \in L^\infty(w \,{\rm d} \mu)_+^d$ the
  functional $ {\bf R} \mapsto {\bf R} ({\bf q})$ is weak-star continuous. Thus $U$ is an open neighbourhood
  of ${\bf S}$ in the weak-star topology.
  By the above remarks, $U$ must contain infinitely many of the $({\bf g}_n)$.
  But for all $n$ sufficiently large,
  $$ {\bf g}_n ({\bf q}) = \int_X {\bf g}_n \cdot {\bf q} \, w \, {\rm d}\mu \geq L > (L + L')/2$$
  and so none of these ${\bf g}_n$ can be in $U$. This is a contradiction, and therefore ${\bf S}({\bf q}) \geq L$.
\end{proof}

\medskip
We now wish to verify that the absolutely continuous components $({S}_{j\mathrm{rn}})$ (where the Radon--Nikodym derivative is
with respect to the measure $w \, {\rm d} \mu$) of $(S_j)$ satisfy

\begin{equation}\label{rew}
  \begin{aligned}
& G(x)\leq \prod_{j=1}^d {S}_{j\mathrm{rn}}(x)^{\alpha_j}\quad\text{a.e. on $X$, and}\\
    & \int_X {S}_{j\mathrm{rn}}(x) T_j f_j(x) \, {\rm d}\mu(x) \leq A \|G\|_{\mathcal{X}^\prime} \|f_j\|_{\mathcal{Y}_j}\\
\end{aligned}
\end{equation}
for all $j$ and for all $f_j \in \mathcal{Y}_j$. Since we know that $S_j \in (L^\infty(X, w \, {\rm d} \mu))^\ast$, we will therefore have
${S}_{j\mathrm{rn}} \in L^1( w \, {\rm d} \mu)$, and this will conclude the proof of Theorem~\ref{thmmain}.

\medskip
\noindent
We may suppose that $\|G\|_{\mathcal{X}'} = 1$.

\medskip
\noindent
We look at the second inequality from \eqref{rew} first.
Fix $m$ and consider
$$ \int_{E_m} S_{jrn} (x) T_j f_j(x) {\rm d} \mu (x) = \int_X S_{jrn} (x) w(x)^{-1} \chi_{E_m}(x) T_j f_j(x) w(x) {\rm d} \mu (x)$$
$$ \leq S_j( w^{-1} \chi_{E_m} T_j f_j)$$
by positivity of each component in the Yosida--Hewitt decomposition of $S_j$, (recall Theorem~\ref{thmYH1}). Now, for $n \geq m$,
$$ \int g_{jn}(x) [w(x)^{-1} T_j f_j(x) \chi_{E_m}(x)] w(x) {\rm d} \mu (x) \leq  \int g_{jn}(x) T_j f_j(x) {\rm d} \mu (x) \leq A \|f_j \|_{\mathcal{Y}_j},$$
so that by Lemma~\ref{vector}(a) (in the scalar case),
$$ S_j(w^{-1} T_j f_j \chi_{E_m}) \leq  A \|f_j \|_{\mathcal{Y}_j}.$$
Thus
$$ \int_{E_m} S_{jrn} (x) T_j f_j(x) {\rm d} \mu (x) \leq A \|f_j \|_{\mathcal{Y}_j}$$
and we now let $m \to \infty$ to get the second inequality of \eqref{rew}.

\medskip
\noindent
Now we look at the first inequality from \eqref{rew}. By Lemma~\ref{abscont} (using the measure $w \, {\rm d} \mu)$ and the fact
that the $E_m$ exhaust $X$, it suffices to show that for each fixed $m$, and all simple $\beta_j$, 
$$ \int_{E_m} G(x) \prod_{j=1}^d \beta_j(x)^{\alpha_j} w(x) {\rm d} \mu(x) \leq \sum_{j=1}^d \alpha_j S_j(\beta_j).$$ 
Take $n \geq m$. By \eqref{kjh} and the arithmetic-geometric mean inequality, the left-hand side is at most
$$ \int_{E_m} \prod_{j=1}^d [g_{jn}(x)\beta_j(x)]^{\alpha_j} w(x) {\rm d} \mu(x)
\leq \sum_{j=1}^d \alpha_j \int_{E_m} g_{jn}(x)\beta_j(x)  w(x) {\rm d} \mu(x)$$
$$ \leq \sum_{j=1}^d \alpha_j \int_X  g_{jn}(x)\beta_j(x)  w(x) {\rm d} \mu(x)
= \sum_{j=1}^d \int_X  g_{jn}(x) [\alpha_j \beta_j(x)]  w(x) {\rm d} \mu(x). $$
Thus for all $n \geq m$, 
$$  \sum_{j=1}^d \int_X  g_{jn}(x) [\alpha_j \beta_j(x)]  w(x) {\rm d} \mu(x) \geq  \int_{E_m} G(x) \prod_{j=1}^d \beta_j(x)^{\alpha_j} w(x) {\rm d} \mu(x).$$
Since the simple functions $\beta_j$ are bounded, Lemma~\ref{vector}(b) gives us that
$$ \sum_{j=1}^d S_j (\alpha_j \beta_j) \geq \int_{E_m} G(x) \prod_{j=1}^d \beta_j(x)^{\alpha_j} w(x) {\rm d} \mu(x),$$
which is what we want.

\medskip
\noindent
This completes the proof of Theorem~\ref{thmmain}.
\qed


\part{Connections with other topics}
\bigskip
\section{Complex interpolation and factorisation}\label{factinterp}
We begin by observing that the trivial identity of Example \ref{Loomis--Whitney}, 
$$ \int_{\mathbb{R}^2} f_1(x_2) f_2(x_1) \, {\rm d}x_1  {\rm d}x_2 = \int_\mathbb{R} f_1   \int_\mathbb{R} f_2,$$
immediately implies via Theorem \ref{thmmainbaby} that, for every nonnegative $G \in L^2(\mathbb{R}^2)$, there
exist nonnegative $g_1$ and $g_2$ such that
$$G(x) \leq \sqrt{g_1(x) g_2(x)} \;\; \mbox{  for almost every  }\; x \in \mathbb{R}^2$$
and 
$$ {\rm ess\, sup}_{x_2} \int g_1(x_1, x_2) {\rm d} x_1 \leq \|G\|_2 \; \mbox{   and   } \;  {\rm ess\, sup}_{x_1} \int g_2(x_1, x_2) {\rm d} x_2 \leq \|G\|_2.$$
While it is not perhaps entirely obvious how to do this explicity (a point to which we return in
Sections~\ref{Loomis--Whitneyrevisited}, \ref{NLLW} and \ref{BLfactdetails} below), for now we want to point out 
that this example highlights the connection between our multilinear duality theory and the theory
of interpolation of Banach spaces. In particular, we consider the {\em upper} method of complex interpolation of
A.~P.~Calder\'on, \cite{Calderon}.

\medskip
\noindent
Suppose that $Z_0$ and $Z_1$ are Banach lattices of measurable functions defined on some measure space. 
We define 
$$Z_0^{1-\theta}Z_1^{\theta} = \{f \; : \, \mbox{ there exist } f_j \in Z_j \mbox{ such that } |f| \leq |f_0|^{1-\theta}|f_1|^{\theta}\}$$
with 
$$\|f\|_{Z_0^{1-\theta}Z_1^{\theta}} = \inf\{ \|f_0\|_{Z_0}^{1-\theta} \|f_1\|_{Z_1}^{\theta}\},$$
the inf being taken over all possible decompositions of $f$. Under the assumption that the unit ball of $ Z_0^{1-\theta}Z_1^{\theta}$ is closed in $Z_0 + Z_1$, Calder\'on showed that
$$ Z_0^{1-\theta}Z_1^{\theta} = [Z_0, Z_1]^\theta$$
where $[Z_0, Z_1]^\theta$ is the interpolation space between $Z_0$ and $Z_1$ obtained by the upper complex method.

\medskip
\noindent
With this in mind, the factorisation statement in our example is tantamount to the statement 
$$ L^2(\mathbb{R}^2) \hookrightarrow [L^\infty_{x_1} (L^1_{x_2}),L^\infty_{x_2} (L^1_{x_1})]^{1/2}.$$
Many other special cases of our theory can be similarly expressed in the language of interpolation.
We leave it to the interested reader to pursue this point of view more systematically.

\medskip
\noindent
In this particular example, there is further structure, see for example Pisier \cite{Pis2},
(in which some of the ideas are attributed to Lust-Piquard). There it is established that we have
$$ L^2(\mathbb{R}^2) = HS(L^2(\mathbb{R})) \hookrightarrow \mathcal{L}_{{\rm reg}}(L^2) 
= [L^\infty_{x_1} (L^1_{x_2}),L^\infty_{x_2} (L^1_{x_1})]^{1/2}$$
where $HS$ denotes the class of Hilbert--Schmidt operators and $\mathcal{L}_{{\rm reg}}(L^2)$ is the space
of {\em regular} bounded linear operators on $L^2$. In rough terms, a regular bounded linear operator on $L^2$
is one such that if its kernel is $K(s,t)$, then $|K(s,t)|$ is also the kernel of a bounded linear operator.

\medskip
\noindent
The implicit factorisation arguments involved in establishing results of this type rely on the Hahn--Banach
theorem or the Perron--Frobenius theorem, and are thus related to minimax theory; they are similarly
non-constructive.

\section{Factorisation and convexity}\label{qzf}
It is also natural to enquire about how factorisation and interpolation interact at the level of particular
families of inequalities. For the sake of concreteness, suppose we are in the setting of multilinear
generalised Radon transforms on euclidean spaces -- so that $T_jF_j = F_j \circ B_j$ for suitable $B_j$.
We shall suppress consideration of any of the technical hypotheses of Theorem~\ref{thmmain} in what follows.
Suppose that we have the pair of inequalities
\begin{equation}\label{int1}
\left\|\prod_{j=1}^d T_jF_j\right\|_{L^{q_k}} \leq A_k
\prod_{j=1}^d \left\|F_j\right\|_{L^{p_{jk}}}
\end{equation}
for $k = 0,1$, where $q_k, p_{jk} \geq 1$.

\medskip
\noindent
Each of these has a family of corresponding equivalent factorisation statements, according to Theorem~\ref{thmmain}
and the remarks in Section~\ref{exples}. See also Section~\ref{rrr} below. After some changes of notation,
one such equivalent pair of statements is as follows. For $k = 0,1$, let $s_k : = q_k \sum_j p_{jk}^{-1}$. Then
for all nonnegative
$G_k$ ($k = 0,1$) such that $\int G_k^{s_k'} = 1$, there are nonnegative $g_{10}, \dots , g_{d0}$ and  $g_{11}, \dots , g_{d1}$ such that
\begin{equation}\label{6571}
  G_k(x) \leq \prod_{j=1}^d g_{jk}(x)^{q_k/p_{jk}s_k}\mbox{ a.e.}
  \end{equation}
and such that for all $f_j$ with $\int f_j \leq 1$, 
\begin{equation}\label{5671}
  \int f_j(B_jx) g_{jk}(x) {\rm d} x \leq A_k^{q_k/s_k}
  \end{equation}
for  $k = 0,1$.

\medskip
\noindent
From \eqref{6571} and \eqref{5671} we shall deduce a factorisation statement which implies the
natural interpolation statement
\begin{equation}\label{int2}
\left\|\prod_{j=1}^d T_jF_j\right\|_{L^{q_\theta}} \leq A_0^{1-\theta}A_1^\theta 
\prod_{j=1}^d \left\|F_j\right\|_{L^{p_{j\theta}}}
\end{equation}
for $0 < \theta < 1$, where, as usual, $1/q_\theta = (1-\theta)/q_0 + \theta/q_1$, and similarly for $1/p_{j\theta}$.

\medskip
\noindent
Indeed, given a nonnegative $G$ with $\int G = 1$, let $G_k = G^{1/s_k'}$.  Taking convex combinations in \eqref{6571} gives us
\begin{equation}\label{765}
  G(x) \leq \prod_{j=1}^d g_{j0}(x)^{q_0 s_0'(1-\theta)/p_{j0}s_0} g_{j1}(x)^{q_1 s_1'\theta/p_{j1}s_1} \mbox{ a.e.}
\end{equation}

\medskip
\noindent
Next, we define
$$ \gamma_j(\theta) := \frac{q_0s_0'}{p_{j0} s_0}(1-\theta) +  \frac{q_1s_1'}{p_{j1} s_1} \theta$$
and define $g_{j\theta}$ by
$$ g_{j\theta}^{\gamma_j(\theta)} :=  g_{j0}(x)^{q_0 s_0'(1-\theta)/p_{j0}s_0} g_{j1}(x)^{q_1 s_1'\theta/p_{j1}s_1}.$$
Then, by \eqref{5671}, we have
$$  \int f_j(B_jx) g_{j\theta}(x) {\rm d} x =  \int f_j(B_jx)  g_{j0}(x)^{q_0 s_0'(1-\theta)/p_{j0}s_0 \gamma_j(\theta)}
g_{j1}(x)^{q_1 s_1'\theta/p_{j1}s_1\gamma_j(\theta)} {\rm d} x$$
$$ \leq \left(\int f_j(B_jx) g_{j0}(x) {\rm d} x\right)^{q_0 s_0'(1-\theta)/p_{j0}s_0 \gamma_j(\theta)}
\left(\int f_j(B_jx) g_{j1}(x) {\rm d} x\right)^{q_1 s_1'\theta/p_{j1}s_1 \gamma_j(\theta)}$$
by H\"older's inequality, since $\gamma_j(\theta)$ is defined precisely to ensure the two exponents
on the right hand side here add to $1$.

\medskip
\noindent
Therefore, if $\int f_j \leq 1$, 
$$  \int f_j(B_jx) g_{j\theta}(x) {\rm d} x \leq \left[A_0^{q_0/s_0}\right]^{q_0 s_0'(1-\theta)/p_{j0}s_0 \gamma_j(\theta)}
\left[A_1^{q_1/s_1}\right]^{q_1 s_1'\theta/p_{j1}s_1 \gamma_j(\theta)}.$$

\medskip
\noindent
Now let $\beta_j(\theta) := \lambda(\theta) \gamma_j(\theta)$ where $\lambda(\theta)$ is defined so that
$\sum_{j=1}^d \beta_j(\theta)  = 1$. By the definition of $s_0$ and $s_1$ we have
$$\sum_{j=1}^d \gamma_j(\theta)
= \sum_{j=1}^d\left(\frac{q_0s_0'}{p_{j0} s_0}(1-\theta) +  \frac{q_1s_1'}{p_{j1} s_1} \theta\right) = (1-\theta) s_0' + \theta s_1'.$$
So, we take 
$$ \lambda(\theta) := \frac{1}{(1-\theta) s_0' + \theta s_1'}.$$

\medskip
\noindent
Now, bearing in mind Remark~\ref{weaksuff}, we conclude that
$$\prod_{j=1}^d\left(\int f_j(B_jx) g_{j\theta}(x) {\rm d} x\right)^{\beta_j(\theta)} \leq
\left[A_0^{q_0/s_0}\right]^{\sum_jq_0 s_0'(1-\theta)\beta_j(\theta)/p_{j0}s_0 \gamma_j(\theta)}
\left[A_1^{q_1/s_1}\right]^{\sum_jq_1 s_1'\theta \beta_j(\theta)/p_{j1}s_1 \gamma_j(\theta)}$$
$$ = \left[A_0^{q_0/s_0}\right]^{\lambda(\theta)\sum_jq_0 s_0'(1-\theta)/p_{j0}s_0}
\left[A_1^{q_1/s_1}\right]^{\lambda(\theta)\sum_jq_1 s_1'\theta/p_{j1}s_1}
=  \left[A_0^{q_0/s_0}\right]^{\lambda(\theta)s_0'(1-\theta)}
\left[A_1^{q_1/s_1}\right]^{\lambda(\theta)s_1'\theta}$$
$$ = A_0^{\frac{s_0'q_0 \lambda(\theta)(1-\theta)}{s_0}} A_1^{\frac{s_1'q_1 \lambda(\theta)\theta}{s_1}}
=  \left\{\left[A_0^{\frac{s_0'q_0 \lambda(\theta)(1-\theta)}{s_0}} A_1^{\frac{s_1'q_1 \lambda(\theta)\theta}{s_1}}\right]^{S(\theta)/Q(\theta)}\right\}^{Q(\theta)/S(\theta)}$$
for a certain quantity $S(\theta)/Q(\theta)$ to which we turn our attention next. Indeed we define
this quantity ({\em not} $S(\theta)$, $Q(\theta)$ separately), so that the exponents on $A_0$ and $A_1$
inside the curly brackets sum to $1$. That is,
$$ \frac{Q(\theta)}{S(\theta)}
:= \lambda(\theta) \left(\frac{s_0'q_0 (1-\theta)}{s_0} + \frac{s_1'q_1 \theta}{s_1}\right). $$
Let us define these exponents of $A_0$ and $A_1$ as $1 - \alpha(\theta)$ and $\alpha(\theta)$ respectively;
that is, we define $\alpha(\theta)$ by
$$ \alpha(\theta) := \frac{S(\theta)}{Q(\theta)} \lambda(\theta) \frac{s_1'q_1 \theta}{s_1}.$$

\medskip
\noindent
Next, we want the $\beta_j = \lambda \gamma_j$ to be of the form $\beta_j(\theta)
= \frac{Q(\theta)}{P_j(\theta)S(\theta)}$ for certain $P_j(\theta)$; that is, $\frac{1}{P_j(\theta)} = \frac{S(\theta) \beta_j(\theta)}{Q(\theta)} = \frac{S(\theta) \lambda(\theta) \gamma_j(\theta)}{Q(\theta)}$.
So, bearing in mind the definitions of $\gamma_j$ and $S/Q$, we define $P_j(\theta)$ by
$$\frac{1}{P_j(\theta)} := \frac{\frac{q_0s_0'}{p_{j0} s_0}(1-\theta) +  \frac{q_1s_1'}{p_{j1} s_1} \theta}
{\frac{s_0'q_0 (1-\theta)}{s_0} + \frac{s_1'q_1 \theta}{s_1}}.$$

\medskip
\noindent
Finally, we define $Q(\theta)$ by
$$ \frac{1}{Q(\theta)} := (1 - \alpha(\theta))\frac{1}{q_0} + \alpha(\theta) \frac{1}{q_1}.$$
It is not hard to check that with all these definitions in place, we have, for each $j$,
$$ \frac{1}{P_j(\theta)} = (1 - \alpha(\theta))\frac{1}{p_{j0}} + \alpha(\theta) \frac{1}{p_{j1}}.$$

\medskip
\noindent
We therefore have that for each $ 0 \leq \theta \leq 1$, for all $G_\theta = G^{1/S'(\theta)}$ such that $\int G_\theta^{S'(\theta)} = 1$,
there exist $g_{j\theta}$ such that 
$$G_\theta(x) \leq \prod_{j=1}^d g_{j\theta}(x)^{Q(\theta)/P_{j}(\theta)S(\theta)}$$
and, for $f_j$ such that $\int f_j \leq 1$,
$$ \prod_{j=1}^d \left(\int f_j(B_jx) g_{j \theta}(x) {\rm d}x\right)^{Q(\theta)/P_j(\theta)S(\theta)}
\leq  \left(A_0^{1 - \alpha(\theta)} A_1^{\alpha(\theta)}\right)^{Q(\theta)/S(\theta)}.$$
Note particularly that the exponents $Q(\theta)/P_{j}(\theta)S(\theta)$ sum to $1$ since $\sum_{j=1}^d \beta_j = 1$.

\medskip
\noindent
Consequently, using the flexibility that Remark~\ref{weaksuff} affords us,
\begin{equation*}
\left\|\prod_{j=1}^d T_jF_j\right\|_{L^{Q(\theta)}} \leq A_0^{1-\alpha(\theta)}A_1^{\alpha(\theta)} 
\prod_{j=1}^d \left\|F_j\right\|_{L^{P_{j}(\theta)}}
\end{equation*}
for $0 < \theta < 1$. Noting that the map $\alpha: [0,1] \to [0,1]$ is a surjection
completes the argument proving \eqref{int2}.

\medskip
\noindent
The argument given here provides no insight into cases in which \eqref{int2} might hold with a smaller
constant than $A_0^{1 - \theta} A_1^\theta$.

\subsection{Factorisation and multiple manifestations of generalised Radon transforms}\label{rrr}
As we have observed in Section~\ref{exples} there may be multiple equivalent manifestations
of the same multilinear inequality. For concreteness, suppose that we are once again considering multilinear
generalised Radon transforms on euclidean spaces so that
$T_j f = f \circ B_j$ for suitable $B_j$. Then the two inequalities
\begin{equation*}
\left\|\prod_{j=1}^d (T_jf_j)^{\alpha_j}\right\|_q
\leq A \prod_{j=1}^d \Big\|f_j\Big\|_{p_j}^{\alpha_j}
\end{equation*}
and 
\begin{equation*}
\left\|\prod_{j=1}^d (T_j\tilde{f}_j)^{\tilde{\alpha}_j}\right\|_{\tilde{q}}
\leq \tilde{A} \prod_{j=1}^d \Big\|\tilde{f}_j\Big\|_{\tilde{p}_j}^{\tilde{\alpha}_j}
\end{equation*}
(where we are imposing $\sum_{j=1}^d \alpha_j = 1 = \sum_{j=1}^d \tilde{\alpha}_j$)
are clearly equivalent provided that $A^q = \tilde{A}^{\tilde{q}}$ and
$\alpha_j \tilde {p}_j/\tilde{\alpha}_j p_j = \tilde{q}/q$
for all $j$. The corresponding factorisation statements 

\bigskip
{\em For all nonnegative $G \in L^{q^\prime}$ there exist nonnegative locally integrable functions 
$g_j$ such that} 
\begin{equation*}
G(x) \leq \prod_{j=1}^d g_j(x)^{\alpha_j}\qquad\mbox{a.e.}
\end{equation*}
{\em and such that for each $j$, for all $f_j \in L^{p_j}$,} 
\begin{equation*}
\int g_j(x)f_j(B_jx) {\rm d} x \leq A\|G\|_{q^{\prime}}\|f_j\|_{p_j}.
\end{equation*}
and

\bigskip
\noindent
{\em For all nonnegative $\tilde{G} \in L^{\tilde{q}^\prime}$ there exist nonnegative locally integrable functions 
$\tilde{g}_j$ such that} 
\begin{equation*}
\tilde{G}(x) \leq \prod_{j=1}^d \tilde{g}_j(x)^{\tilde{\alpha}_j}\qquad\mbox{a.e.}
\end{equation*}
{\em and such that for each $j$, for all $\tilde{f}_j \in L^{\tilde{p}_j}$,} 
\begin{equation*}
\int \tilde{g}_j(x)\tilde{f}_j(B_jx) {\rm d}x \leq \tilde{A}\|\tilde{G}\|_{L^{\tilde{q}^{\prime}}}\|\tilde{f}_j\|_{\tilde{p}_j}.
\end{equation*}

\bigskip
are therefore also equivalent (subject to suitable hypotheses), by  Proposition~\ref{thmguthbaby}
and Theorem~\ref{thmmainbaby}. However it is not immediately apparent whether this equivalence can be seen directly
via changes of notation coupled with simple convexity arguments. In this connection the remarks in Section~5.7 of
\cite{MR2061575} may be helpful.

\section{Factorisation and more general multilinear operators}\label{culture}\label{fdmgs}
The multilinear operators we have considered have a rather special form in so far as they are built out of a collection
of positive linear operators by taking a pointwise geometric mean. One may ask to what extent the theory we
have developed is valid for more general multilinear operators
$T : \mathcal{Y}_1 \times \dots \times \mathcal{Y}_d \to \mathcal{X}$. In such a setting we will no longer be able to
attribute different ``weights'' $\alpha_j$ to the different components $\mathcal{Y}_j$, and all of them will need to
be treated on an equal footing.

\medskip
\noindent
For a nonnegative kernel $K$, let us therefore consider multilinear operators of the form
$$ T(f_1, \dots , f_d)(x) = \int_{Y_d} \dots \int_{Y_1} K(x, y_1, \dots , y_d) f_1(y_1) \dots f_d(y_d) {\rm d} \nu_1(y_1) \dots {\rm d} \nu_d(y_d)$$
and inequalities of the form
\begin{equation}\label{newmultilinear}
    \left\|T(f_1,\dots,f_d)^{1/d}\right\|_{\mathcal{X}}\leq A
    \prod_{j=1}^d \|f_j\|_{\mathcal{Y}_j}^{1/d}.
\end{equation}

\medskip
\noindent
When $K$ is of the form $K(x, y_1, \dots , y_d) = K_1(x, y_1) \dots K_d(x, y_d)$, these are the special cases
of the inequalities \eqref{mainineq} given by $\alpha_j=1/d$ for $j=1,\dots,d$.
It is very natural to ask whether there is a general duality/factorisation result
along the same lines as Proposition~\ref{thmguth} and Theorem~\ref{thmmain} which yields a
necessary and sufficient condition for the validity of inequality \eqref{newmultilinear}.

\medskip
\noindent
As the reader will readily verify by following the proof of Proposition~\ref{thmguth},
inequality \eqref{newmultilinear} does indeed hold (under hypotheses on $\mathcal{X}$ and $\mathcal{Y}_j$
similar to those of Proposition~\ref{thmguth}),
if, for all $G\in \mathcal{X}^\prime$ such that
$\|G\|_{\mathcal{X}^\prime}\leq 1$, we have that there there exist nonnegative functions $g_j$ on $X\times Y_j$ such that
\begin{equation}\label{newfactorisation}
\begin{aligned}
    K(x,y_1,\dots,y_d)^{1/d}G(x)&\leq \prod_{j=1}^d g_j(x,y_j)^{1/d}\quad\text{a.e.}\\
    \mbox{and }\left\|\int_X g_j(x,\cdot) d\mu(x)\right\|_{\mathcal{Y}_j^\ast}&\leq A.
\end{aligned}
\end{equation}

\medskip
\noindent
This observation has proved very useful in multilinear Kakeya theory, see Section~\ref{MKrevisited} below.

\medskip
\noindent
However, the converse is not true, namely inequality \eqref{newmultilinear} does not in general imply
the existence of $S_j$ such that \eqref{newfactorisation} holds even if we assume that the integral kernel $K$ is invariant under permutations of the $y$-variables:

\begin{prop}
Let $d = 2$. Let $X=Y_1=Y_2 = \{1,2\} = \Omega$ with counting measure, 
$\mathcal{X}=L^4(\Omega)$, and $\mathcal{Y}_1=\mathcal{Y}_2=L^2(\Omega)$. There exists a bilinear
$T:  L^2(\Omega) \times L^2(\Omega) \to L^4(\Omega)$ such that \eqref{newmultilinear} holds with $A = 2^{1/4}$ but such that
\eqref{newfactorisation} can only hold with $A \geq 2^{1/2}$. 
\end{prop} 

\begin{proof}
Let the integral kernel $K$ of $T$ satisfy
\begin{displaymath}
    K(1,1,1)=K(2,1,1)=K(2,2,2)=1
\end{displaymath}
and let $K$ equal zero otherwise. Let $f_1=(a_1,a_2)$ and $f_2=(b_1,b_2)$ and we assume
$a_1^2+a_2^2=b_1^2+b_2^2=1$.
Then
\begin{displaymath}
    T(f_1,f_2)(1) = a_1b_1\quad\text{and}\quad
    T(f_1,f_2)(2) = a_1b_1+a_2b_2
\end{displaymath}
so
\begin{displaymath}
    \int T(f_1,f_2)(x)^2 {\rm d}x = (a_1b_1)^2+(a_1b_1+a_2b_2)^2.
\end{displaymath}
This is clearly maximised, subject to the normalisation constraints,
by taking $a_1=b_1=1$, $a_2=b_2=0$ and the maximum is $2$.
So we see that the multilinear inequality \eqref{newmultilinear} holds for this operator with $A=2^{1/4}$.

\medskip
For problem \eqref{newfactorisation}, consider $G=(0,1)$. Then the non-trivial constraints
are
\begin{displaymath}
    1\leq \sqrt{g_1(2,1)g_2(2,1)}
    \quad\text{and}\quad
    1\leq \sqrt{g_1(2,2)g_2(2,2)}
\end{displaymath}
and
\begin{gather*}
    \sqrt{(g_1(1,1)+g_1(2,1))^2+(g_1(1,2)+g_1(2,2))^2}\leq A\\
    \sqrt{(g_2(1,1)+g_2(2,1))^2+(g_2(1,2)+g_2(2,2))^2}\leq A.
\end{gather*}
Using $uv\leq (u^2+v^2)/2$ on the lower bounds gives
\begin{displaymath}
    1\leq (g_1(2,1)^2+g_2(2,1)^2)/2
    \quad\text{and}\quad
    1\leq (g_1(2,2)^2+g_2(2,2)^2)/2,
\end{displaymath}
so
\begin{displaymath}
    {2}\leq g_1(2,1)^2+g_2(2,1)^2
    \quad\text{and}\quad
    {2}\leq g_1(2,2)^2+g_2(2,2)^2,
\end{displaymath}
so
\begin{displaymath}
    4\leq g_1(2,1)^2+g_2(2,1)^2+ g_1(2,2)^2+g_2(2,2)^2,
\end{displaymath}
and thus
\begin{displaymath}
    2\leq\max\{g_1(2,1)^2+g_1(2,2)^2,g_2(2,1)^2+g_2(2,2)^2\}
\end{displaymath}
giving $A \geq 2^{1/2}$, which is strictly larger than $2^{1/4}$. So while inequality \eqref{newmultilinear} holds
in this case, there are $G$ for which there are {\em no} $g_j$ satisfying \eqref{newfactorisation} with the same value of $A$.
\end{proof}

\medskip
\noindent
We invite the reader to use this idea to construct examples where \eqref{newmultilinear} holds with $A=1$ but
for which \eqref{newfactorisation} holds for no finite $A$.

\medskip
\noindent
See \cite{GT} for a different approach to inequalities of the form \eqref{newmultilinear}, based upon
considerations related to Schur's lemma rather than duality.

\part{Examples and illustrations of the theory}
\bigskip
\noindent
In this part we revisit the examples in the introduction which motivated our study. We examine what
insights our duality--factorisation results bring to, and have gained from, each of them.
In some cases we reap the benefits of more direct and streamlined factorisation-based proofs
of known inequalities. In others, an interesting challenge is posed -- it can be argued that we cannot
claim to have a full understanding of an inequality until we can exhibit its equivalent
factorisation statement.

\section{Classical inequalities revisited}\label{classical}
\subsection{H\"older's inequality}\label{Holderrevisited}
We observed above that the multilinear form of H\"older's inequality for nonnegative functions is equivalent,
    for any fixed set of exponents $\alpha_j > 0$ with $\sum_{j=1}^d \alpha_j = 1$, to
$$ \| f_1^{\alpha_1} \cdots f_d^{\alpha_d} \|_q \leq  \|f_1 \|_{q_1}^{\alpha_1} \cdots\|f_d \|_{q_d}^{\alpha_d}$$
    for any choice of indices $ 1 \leq q_j < \infty$ and $1 \leq q < \infty$ which satisfies
    $\sum_{j=1}^d \alpha_j q_j^{-1} = q^{-1}$. 

\medskip
\noindent By Theorem~\ref{thmmain}, each instance of this inequality is equivalent to the existence of a
subfactorisation of any $ G \in L^{q'}$ as
$$ G(x) \leq \prod_{j=1}^d g_j(x)^{\alpha_j} \; \; \mbox{ a.e.}$$
where
$$\|g_j \|_{{q_j}^\prime} \leq \|G\|_{q'}.$$

\medskip
\noindent
Taking $g_j = \lambda_j G^{\gamma_j}$ for appropriate $\lambda_j$ and $\gamma_j$ verifies this.
In particular, if we take $q_j = q \geq 1$ for all $j$, then we can simply take $g_j = G$ for all $j$.

\subsection{The affine-invariant Loomis--Whitney inequality}\label{Loomis--Whitneyrevisited}
Recall that the Loomis--Whitney inequality is
$$ |\int_{\mathbb{R}^n} F_1(\pi_1 x) \cdots F_n(\pi_n x) \, {\rm d} x| 
\leq \|F_1\|_{L^{n-1}(\mathbb{R}^{n-1})} \cdots \|F_n\|_{L^{n-1}(\mathbb{R}^{n-1})},$$
where $\pi_j x = (x_1, \dots, \widehat{x_j}, \dots, x_n)$ is projection onto the hyperplane perpendicular
to the $j$'th standard basis vector $e_j$.
For every $0 < p < \infty $ this is equivalent to the inequality
$$ \|f_1(\pi_1 x)^{1/n} \cdots f_n(\pi_n x)^{1/n} \|_{L^{np/(n-1)}(\mathbb{R}^n)} 
\leq \|f_1\|_{L^{p}(\mathbb{R}^{n-1})}^{1/n} \cdots \|f_n\|_{L^{p}(\mathbb{R}^{n-1})}^{1/n}.$$
Each of these inequalities with $p \geq 1$ falls under the scope of our theory.

\medskip
\noindent
For example when $p =1$ we have the equivalent formulation
\begin{equation*}
\int_{\mathbb{R}^n} \prod_{j=1}^n f_j(\pi_{j} x)^{1/(n-1)} {\rm d}x \leq
\prod_{j=1}^n \left(\int_{\mathbb{R}^{n-1}} f_j\right)^{1/(n-1)}.
\end{equation*}

\medskip
\noindent
More generally, if $\pi_{\omega^\perp}$ represents orthogonal
projection onto the hyperplane perpendicular to $\omega \in
\mathbb{S}^{n-1}$, we have the affine-invariant Loomis--Whitney inequality
\begin{equation}\label{afflw}
\int_{\mathbb{R}^n} \prod_{j=1}^n f_j(\pi_{\omega_j^\perp} x)^{1/(n-1)} {\rm d}x \leq
(\omega_1 \wedge \dots \wedge
\omega_n)^{-1/(n-1)}
\prod_{j=1}^n \left(\int_{\mathbb{R}^{n-1}} f_j\right)^{1/(n-1)},
\end{equation}
where $(\omega_1 \wedge \dots \wedge
\omega_n)^{-1/(n-1)}$ is the best constant in the inequality. Here, $\omega_1 \wedge \dots \wedge \omega_n$ 
is the modulus of the determinant of the matrix whose columns are $\omega_1, \dots, \omega_n$, and it is the 
volume of the parallepiped whose sides are given by the vectors $\omega_j$.
(Clearly if we choose all the $\omega_j$ to be the same we cannot expect a finite constant, and the constant 
in general should reflect ``quantitative linear independence'' of the $\omega_j$.)

\medskip
\noindent
We give a direct and elegant proof of \eqref{afflw} by explicitly establishing a
suitable factorisation. Indeed, according to Proposition~\ref{thmguthbaby}, it is sufficient that for
every nonnegative $G \in L^n(\mathbb{R}^n)$ we can find
$g_1, \dots, g_n$ such that
$$ G(x) = g_1(x)^{1/n} \cdots g_n(x)^{1/n} \mbox{ a.e.}$$
and, for all $j$ and almost every $x$,
$$ \int g_j(x + t \omega_j) {\rm d}t = (\omega_1 \wedge \dots \wedge
\omega_n)^{-1/n} \|G\|_n.$$
This is because for any $f : \mathbb{R}^{n-1} \to \mathbb{R}$ and $g: \mathbb{R}^n \to \mathbb{R}$, writing
$x \in \mathbb{R}^n$ as $x = u + t \omega_j$ with $u \in \omega_j^\perp$, we have
$$\int f(\pi_{\omega_j^\perp} x) g(x) {\rm d}x 
= \int_{\mathbb{R}^{n-1}} \int_\mathbb{R} f(\pi_{\omega_j^\perp}(u + t \omega_j))g(u + t \omega_j) {\rm d}t {\rm d}u  $$
$$ = \int_{\mathbb{R}^{n-1}} f(u) \left(\int_{\mathbb{R}} g(u + t \omega_j) {\rm d}t \right){\rm d}u.$$

\medskip
\noindent
Let $G :\mathbb{R}^n \to \mathbb{R}$ be a nonnegative function which satisfies 
$\int_{\mathbb{R}^n} G(x)^n {\rm d}x = 1$. For $\omega_1, \dots, \omega_n \in \mathbb{S}^{n-1}$ and 
$\xi \in \mathbb{R}^n$ let us first note that if we set, for $s = (s_1, \dots , s_n) 
\in \mathbb{R}^n$, 
$$y(s) = \xi + s_1 \omega_1 + \dots + s_{n-1} \omega_{n-1} + s_n \omega_n,$$ 
then we have that the Jacobian map $\partial y / \partial s $ satisfies
$$ |\det \left(\partial y / \partial s \right)| = \omega_1 \wedge \dots  \wedge \omega_n.$$

\medskip
\noindent
Therefore, for every $\xi \in \mathbb{R}^n$,
$$\int G(\xi + s_1 \omega_1 + \dots + s_{n-1} \omega_{n-1} + s_n \omega_n)^n {\rm d}s_1 {\rm d}s_2 \dots {\rm d}s_{n}
= \int G(y(s))^n {\rm d}s $$
$$ = \int G(y)^n \frac{1}{ |\det \left(\partial y / \partial s \right)|} {\rm d}y = 
(\omega_1 \wedge \dots  \wedge \omega_n)^{-1}.$$

\medskip
\noindent
Secondly, $G(x)^n$ can be written (for a.e. $x$) as a telescoping product
\begin{eqnarray*}
\begin{aligned}
& \frac{G(x)^n}{\int G(x + s_1 \omega_1)^n {\rm d}s_1} \times \frac{\int G(x
    + s_1 \omega_1)^n
    {\rm d}s_1}{\int G(x + s_1 \omega_1 + s_2 \omega_2)^n {\rm d}s_1 {\rm d}s_2} \times
    \dots \\
& \times \frac{\int G(x + s_1 \omega_1 + \dots + s_{n-1} \omega_{n-1})^n{\rm d} s_1
     {\rm d}s_2 \dots {\rm d}s_{n-1}}{\int G(x + s_1 \omega_1 + \dots + s_{n-1}
    \omega_{n-1} + s_n \omega_n)^n {\rm d}s_1
   {\rm d}s_2 \dots {\rm d}s_{n}} \times \left(\omega_1 \wedge \dots \wedge \omega_n\right)^{-1}
\end{aligned}
\end{eqnarray*}
\medskip
$$ := g_1(x) \dots g_n(x)$$ 
where 
$$ g_j(x) = \frac{\int G(x + s_1 \omega_1 + \dots + s_{j-1} \omega_{j-1})^n {\rm d}s_1
    {\rm d}s_2 \dots {\rm d}s_{j-1}}{\int G(x + s_1 \omega_1 + \dots + s_{j-1}
    \omega_{j-1} + s_j \omega_j)^n{\rm d}s_1
    {\rm d}s_2 \dots {\rm d}s_{j}} \times \left(\omega_1 \wedge \dots \wedge \omega_n\right)^{-1/n}.$$

\medskip
\noindent
If we replace $x$ by $x + t \omega_j$ in this formula, the denominator is unchanged, and so if we 
then integrate with respect to $t$ we immediately see that 
$$\int g_j(x + t \omega_j) {\rm d} t = \left(\omega_1 \wedge \dots \wedge \omega_n\right)^{-1/n}$$
identically for $x \in \mathbb{R}^n$, as we needed.

\medskip
\noindent
A similar approach works when we instead consider projections onto subspaces
whose codimensions sum to $n$. Indeed, suppose that we have subspaces $E_j$
of $\mathbb{R}^n$ with $\dim E_j = k_j$ and $\sum_{j=1}^d k_j = n$ and 
assume that $\mathbb{R}^n = E_1 + \dots + E_d$ as an algebraic direct sum.

\medskip
\noindent
We identify a quantity which measures lack of orthogonality of these subspaces in the same way that the
wedge product $\omega_1 \wedge \dots \wedge \omega_n$ measures the degeneracy
in the directions $\omega_1, \dots , \omega_n \in \mathbb{S}^{n-1}$. Let
$\{e_{j1}, e_{j2}, \dots , e_{jk_j}\}$ be an orthonormal basis for $E_j$ and define
$$E_1 \wedge \dots \wedge E_d := \wedge_{j=1}^d \wedge_{k=1}^{k_j} e_{jk};$$
that is, $E_1 \wedge \dots \wedge E_d$ is the absolute value of the determinant of the 
$n \times n$ matrix whose $j$'th block of $k_j$ columns comprises an orthonormal basis for $E_j$.
It is easily checked that this quantity is independent of the particular orthonormal bases 
chosen, and it can of course be defined in a more canonical and invariant way.

\begin{prop}
For $E_j$ as above, let $\pi_j$ be the projection whose kernel is $E_j$. Then we have the
affine-invariant $k_j$-plane Loomis--Whitney inequality:
\begin{equation}\label{aikjplw}
\begin{aligned}
&\int_{\mathbb{R}^n} f_1(\pi_1 x)^{1/(d-1)} \dots f_d(\pi_d x)^{1/(d-1)} {\rm d}x \\
&\leq
  \left(E_1 \wedge \dots \wedge E_d\right)^{-1/(d-1)}
\left(\int f_1 \right)^{1/(d-1)} \dots \left(\int f_d \right)^{1/(d-1)}.
\end{aligned}
\end{equation}
\end{prop}

\medskip
\noindent
The proof via factorisation is formally the same as the case when $k_j = 1$ for all $j$,
where now the roles of the variables $s_j \in \mathbb{R}^1$ are replaced by copies of 
$\mathbb{R}^{k_j}$. We leave the details to the reader.

\medskip
\noindent
In the special case of the trivial identity,
$$ \int_{\mathbb{R}^2} F_1(x_2)F_2(x_1) \; {\rm d} x_1  {\rm d} x_2 = \int_\mathbb{R} F_1 \int_\mathbb{R} F_2,$$
(see Section~\ref{factinterp}), a suitable factorisation of $G \in L^2(\mathbb{R}^2)$ with $\|G\|_2 = 1$ is given by $G(x)^2 = g_1(x) g_2(x)$ a.e., where
$$g_1(x_1, x_2) = \frac{G(x_1, x_2)^2}{\int_\mathbb{R} G(s, x_2)^2 {\rm d} s}$$
and
$$g_2(x_1, x_2) = \int_\mathbb{R} G(s, x_2)^2 {\rm d} s.$$

\medskip
\noindent
Note that this factorisation depends upon the order we have assigned to $\{1,2\}$. 
On the other hand, given this ordering, the essentially unique way to write
$$ G(x)^2 = g_1(x_1, x_2)g_2(x_2)$$
where $\|g_1(\cdot, x_2)\|_1 = 1$ for all $x_2$ and $\|g_2\|_1 = 1$ is as we have given.
See Section \ref{BLfactdetails}, where this observation drives related issues.

\medskip
\noindent
There are many variants of the Loomis--Whitney inequality -- for example Finner's inequalities \cite{Fi}
-- which can likewise be established by the same factorisation method.

\subsection{The nonlinear Loomis--Whitney inequality}\label{mgrtrevisited}\label{NLLW}
Nonlinear Loomis--Whitney inequalities (and some multilinear generalised Radon transforms)
can likewise be established by similar methods. In fact the first proof of the nonlinear
Loomis--Whitney inequality with essentially the sharp constant was obtained via an explicit
factorisation technique. We give the details.

\medskip
\noindent
Let $V$ be an open neighbourhood of $0$ in $\mathbb{R}^n$ and
$U$ an open neighbourhood of $0$ in $\mathbb{R}^{n-1}$. Let $\pi: V \to U$ be a $C^1$ submersion onto $U$, and
for $x \in V$ let $\omega(x)$ be the wedge product of the rows of
${\rm d} \pi(x)$. We assume that the fibres $\pi^{-1}(u)$ for $u \in U$
can be parametrised by $C^1$ curves $t \mapsto \gamma(t,x)$ in such a way that

\begin{itemize}
\item for all $x \in V $, $\gamma(0,x) = x$
\item for all $x \in V $, for all $t$,  $\pi \gamma(t,x) = \pi x$
\item (semigroup property) for all $x \in V $, for all $t$ and $s$,
$$\gamma(t, \gamma(s,x)) = \gamma(s+t, x)$$
\item for all $x$ and $t$,
$\frac{\rm d}{{\rm d}t} \gamma(t,x) = \omega(\gamma(t,x))$.
\end{itemize}

\medskip
\noindent
The domain of each curve $\gamma(\cdot, x)$ will be an open interval $I_x$ containing $0$
which we largely suppress in what follows, but we stress that $\gamma(I_x,x)$ is the entire fibre
containing $x$. In all the $t$-integrals 
below it is assumed that we are integrating over such maximal domains.

\medskip
\noindent
We note that under these assumptions, especially the last one, the co-area formula gives 
$$ \int_V f(\pi x) g(x) {\rm d}x = \int_U f(u) \left(\int g(\gamma(t, \tilde{u})) {\rm d} t \right){\rm d}u$$
for any reasonable functions $f$ and $g$.

\medskip
\noindent
We now assume that we have $n$ submersions $\pi_1, \dots, \pi_n$ as above, and we assume that $\omega_1(0) \wedge
\dots \wedge \omega_n(0) \neq 0$. For each $x \in V$ we define the maps ${\bf t} \mapsto \Phi_x({\bf t})$ by
$$ \Phi_x : (t_1, \dots, t_n) \mapsto \gamma_1(t_1, \gamma_2(t_2, \dots ,\gamma_n(t_n,x)) \dots )$$
which satisfy $\Phi_x(0) = x$ and also
$$ |\det( D \Phi_x) (0)| = (\omega_1 \wedge \dots \wedge
\omega_n)(x) \neq 0$$
provided $x$ is sufficiently close to $0$.

\medskip
\noindent
We shall assume that $V$ is sufficiently small so that for each $x \in V$, the map $ \Phi_x $ is
injective -- as was pointed out in \cite{BCW}, 
even in two dimensions some global hypothesis of this sort is needed.

\medskip
\noindent
With the set-up above, for $x \in V$ let 
\begin{equation}\label{WWW}
  W(x) := \inf_{\xi \in V} \det |(D \Phi_\xi)(\Phi_\xi^{-1}(x))|.
\end{equation}
Note that $W(x) \leq \omega_1(x) \wedge \dots \wedge \omega_n(x)$,
(take $\xi = x$), and that $W(\Phi_x({\bf t})) \leq |\det(D \Phi_x)({\bf t})|$ for all $x$ and ${\bf t}$.

\medskip
\noindent
For $1 \leq j \leq n$ and suitable $F$ let 
$$ S_j(x) = \frac{\int \dots \int F(\gamma_1(t_1, \gamma_2(t_2, \dots , \gamma_{j-1}(t_{j-1},x)) \dots )) {\rm d}t_{j-1} \dots {\rm d}t_{1}}
{\int \dots \int F(\gamma_1(t_1, \gamma_2(t_2, \dots , \gamma_{j}(t_{j},x)) \dots )) {\rm d}t_{j} \dots {\rm d}t_{1}}$$
(so that $S_1$ has no integrals in the numerator).

\medskip
\noindent
Then we have
$$ S_j( \gamma_j (\tau, x)) 
= \frac{\int \dots \int F(\gamma_1(t_1, \gamma_2(t_2, \dots ,\gamma_{j-1}(t_{j-1},\gamma_j (\tau, x))) \dots )) {\rm d}t_{j-1} \dots {\rm d}t_{1}}
{\int \dots \int F(\gamma_1(t_1, \gamma_2(t_2, \dots ,\gamma_{j}(t_{j},\gamma_j (\tau, x))) \dots )) {\rm d}t_{j} \dots {\rm d}t_{1}}.$$

\medskip
\noindent
We claim that for each $j$ and each $x$,
$$\int S_j( \gamma_j (\tau, x)) {\rm d} \tau =1.$$

\medskip
\noindent
Indeed, notice that the denominator in the previous expression, 
$$\int \dots \int F(\gamma_1(t_1, \gamma_2(t_2, \dots ,\gamma_{j}(t_{j},\gamma_j (\tau, x))) \dots )) {\rm d}t_{j} \dots {\rm d}t_{1},$$
equals
$$ \int \dots \int F(\gamma_1(t_1, \gamma_2(t_2, \dots ,\gamma_{j}(t_{j} + \tau , x )) \dots )) {\rm d}t_{j} \dots {\rm d}t_{1}$$
$$ = \int \dots \int F(\gamma_1(t_1, \gamma_2(t_2, \dots ,\gamma_{j}(t_{j}, x )) \dots )) {\rm d}t_{j} \dots {\rm d}t_{1}$$
by the semigroup property, and is therefore independent of $\tau$. So
$$ \int S_j( \gamma_j (\tau, x)) {\rm d} \tau = 
\frac{\int \dots \int F(\gamma_1(t_1, \gamma_2(t_2, \dots ,\gamma_{j-1}(t_{j-1},\gamma_j (\tau, x))) \dots )) {\rm d}t_{j-1} \dots {\rm d}t_{1} {\rm d} \tau}
{\int \dots \int F(\gamma_1(t_1, \gamma_2(t_2, \dots ,\gamma_{j}(t_{j}, x )) \dots )) {\rm d}t_{j} \dots {\rm d}t_{1}}$$
which equals $1$ by Fubini's theorem.

\medskip
\noindent
On the other hand,
$$ \prod_{j=1}^n S_j(x) 
= \frac{F(x)}{\int \dots \int F(\gamma_1(t_1, \gamma_2(t_2, \dots ,\gamma_{n}(t_{n},x)) \dots )) {\rm d}t_{n} \dots {\rm d}t_{1}},$$
so that
$$F(x) = \prod_{j=1}^n S_j(x)  \int F(\Phi_x({\bf t})){\rm d} {\bf t}.$$

\medskip
\noindent
Taking $F(x) = S(x)^n W(x)$, we therefore have
$$ S(x)^n W(x) = \prod_{j=1}^n S_j(x)  \int S(\Phi_x({\bf t}))^n W(\Phi_x({\bf t})) {\rm d} {\bf t}$$
$$ \leq \prod_{j=1}^n S_j(x)  \int S(\Phi_x({\bf t}))^n |\det (D \Phi_x)({\bf t})| {\rm d} {\bf t}
= \prod_{j=1}^n S_j(x) \int_V S(y)^n {\rm d} y $$
since $W(\Phi_x({\bf t})) \leq |\det (D \Phi_x)({\bf t})|$ for all ${\bf t}$ and since 
each $\Phi_x$ is injective. We also have that for each $j$ and each $x$,
$$ \int_V f(\pi_j x) S_j(x) {\rm d}x =
\int_{U_j} f(u)\left(\int S_j( \gamma_j (\tau, \tilde{u})) {\rm d} \tau \right) {\rm d} u
= \int_{U_j} f.$$

  \medskip
  \noindent
  By the easy half of the duality argument, this shows that for all nonnegative $f_j \in L^1(U_j)$
  we have
  $$\| \prod_{j=1}^n f_j(\pi_j x)^{1/n} W(x)^{1/n} \|_{L^{n/(n-1)}(V)} \leq \prod_{j=1}^n
  \left(\int_{U_j} f_j \right)^{1/n}.$$
Consequently we have:

  \begin{prop}\label{FFive}
Under the above assumptions, with $W$ defined as in \eqref{WWW}, we have 
   $$ \int_V \prod_{j=1}^n f_j(\pi_j x)^{1/(n-1)} W(x)^{1/(n-1)} \; {\rm d} x \leq \prod_{j=1}^n
  \left(\int_{U_j} f_j \right)^{1/(n-1)}.$$
\end{prop}

\medskip
  \noindent
  Noting that $W(x) \leq \omega_1(x) \wedge \dots \wedge \omega_n(x)$, one might ask whether
  $$ \int_V \prod_{j=1}^n f_j(\pi_j x)^{1/(n-1)} \omega_1(x) \wedge \dots \wedge \omega_n(x)^{1/(n-1)} \; {\rm d} x \leq \prod_{j=1}^n
  \left(\int_{U_j} f_j \right)^{1/(n-1)}$$
  holds for sufficiently small $V$.

  \medskip
  \noindent
  As an immediate corollary of Proposition~\ref{FFive}, we obtain the sharp form of the
  nonlinear Loomis--Whitney inequality of \cite{BCW}:

  \begin{cor}
    Let $V$ be an open neighbourhood of $0$ in $\mathbb{R}^{n}$ and $U$ an open
    neighbourhood of $0$ in $\mathbb{R}^{n-1}$. For $1 \leq j \leq n$,
    let $\pi_j: V \to U$ be $C^1$ submersions onto $U$, and for $x \in V$ let $\omega_j(x)$
    be the wedge product of the rows of ${\rm d} \pi_j$. Assume that $ \omega_1(0) \wedge \dots \wedge \omega_n(0) \neq 0$. Then, for all $\epsilon > 0$, there is a neighbourhood $V' \subseteq V$ of $0$, such that for all $f_j$
 $$ \int_{V'} \prod_{j=1}^n f_j(\pi_j x)^{1/(n-1)} \; {\rm d} x \leq (1+\epsilon)
  \omega_1(0) \wedge \dots \wedge \omega_n(0)^{-1/(n-1)} \prod_{j=1}^n \left(\int_{U_j} f_j \right)^{1/(n-1)}$$
  \end{cor}

  \begin{proof}
Given $\epsilon >0$ we can choose $V'$ sufficiently small that $\omega_1(0) \wedge \dots \wedge \omega_n(0)
  \leq (1+ \epsilon)^{n-1}W(x)$ for all
  $x \in V'$.
\end{proof}
  
  Since the work in this section was presented in various public fora, Bennett et al
  have shown, using methods based on induction on scales, that any
 Brascamp--Lieb inequality has a corresponding nonlinear counterpart with the same loss in
the constant of at most $(1 + \epsilon)$. See \cite{Betal}.
  
\section{Brascamp--Lieb inequalities revisited}\label{BLrevisited}
We shall discuss the Brascamp--Lieb inequalities under two headings. Firstly we shall address geometric Brascamp--Lieb
inequalities (where in particular we can identify the sharp constant and existence of Gaussian extremisers),
and secondly we will examine general Brascamp--Lieb inequalities with a finite (but unquantified) constant. 

\subsection{Geometric Brascamp--Lieb inequalities}\label{GBL}
The next result is a direct application of Theorem~\ref{thmmainbaby} to the geometric Brascamp--Lieb inequalities
    of Example~\ref{BL}. 

    \begin{thm}
      For $1 \leq j \leq d$ let $V_j$ be a subspace of $\mathbb{R}^n$.
      Let $B_j: \mathbb{R}^n \to V_j$ be orthogonal projection.
       Suppose there exist $p_j$ with $0 < p_j < \infty$ such that
       $$ \sum_{j=1}^d p_j B_j^* B_j = I_n.$$
       Let $1 \leq q_j < \infty$, and define $q = \sum_{j=1}^d p_j q_j$.
       Then for all $G \in L^{q'}(\mathbb{R}^n)$ there exist
       $g_1, \dots , g_d$ such that
       $$ G(x) \leq g_1(x)^{p_1q_1/q} \dots g_d(x)^{p_dq_d/q} \; \; {\mbox{ a.e.}}$$
       and, for all $j$,
       $$ \left\| \int_{V_j^\perp} g_j \right\|_{L^{{q_j}^{\prime}}(V_j)} \leq \left\|G\right\|_{q'}.$$
      \end{thm}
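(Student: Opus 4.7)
The plan is to obtain this as a direct application of Theorem~\ref{thmmainbaby}, once the geometric Brascamp--Lieb inequality is put in the right form. Take $T_j : L^{q_j}(V_j) \to \mathcal{M}(\mathbb{R}^n)$ to be the composition operator $T_j f_j(x) = f_j(B_j x)$; this is plainly linear and positive. Using the Ball--Barthe inequality \eqref{BL2}, substitute $F_j = f_j^{q_j}$ and take $q$-th roots with $q = \sum_j p_j q_j$ to obtain the equivalent form
$$\left\|\prod_{j=1}^d (T_j f_j)^{p_j q_j / q}\right\|_{L^q(\mathbb{R}^n)} \leq \prod_{j=1}^d \|f_j\|_{L^{q_j}(V_j)}^{p_j q_j/q},$$
with constant $1$. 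This is exactly an instance of \eqref{submainineq} with exponents $\alpha_j = p_j q_j / q$, which are positive and sum to $1$ by the definition of $q$; the hypotheses $q_j \geq 1$ and $q \geq 1$ are satisfied (the latter since $q \geq \sum_j p_j \geq 1$ as noted in Example~\ref{BL}).

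Next I would verify the saturation hypothesis: for any $E \subseteq \mathbb{R}^n$ of positive measure, pick a ball $B \subseteq V_j$ large enough that $\pi_{V_j}(E) \cap B$ has positive $(\dim V_j)$-dimensional measure, and set $h_j = \chi_B$; then $T_j h_j = \chi_{B_j^{-1}(B)}$ is identically $1$ on a subset of $E$ of positive measure. Saturation holds for every $j$, and the measure spaces $V_j, \mathbb{R}^n$ are $\sigma$-finite, so the hypotheses of Theorem~\ref{thmmainbaby} are met with $A = 1$.

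Applying Theorem~\ref{thmmainbaby} then produces, for each nonnegative $G \in L^{q'}(\mathbb{R}^n)$, functions $g_j$ satisfying
$$G(x) \leq \prod_{j=1}^d g_j(x)^{p_j q_j / q} \quad \text{a.e.}$$
together with the dual bounds
$$\int_{\mathbb{R}^n} g_j(x) f_j(B_j x)\, dx \leq \|G\|_{L^{q'}} \|f_j\|_{L^{q_j}(V_j)}.$$
It remains only to recognise the left-hand side as a pairing with $T_j^\ast g_j$. Writing $x = y + z$ with $y \in V_j$, $z \in V_j^{\perp}$, Fubini gives
$$\int_{\mathbb{R}^n} g_j(x) f_j(B_j x)\, dx = \int_{V_j} f_j(y) \left(\int_{V_j^{\perp}} g_j(y + z)\, dz\right) dy,$$
so the dual estimate is exactly
$$\left\| \int_{V_j^{\perp}} g_j \right\|_{L^{q_j'}(V_j)} \leq \|G\|_{q'},$$
as required. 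The argument is essentially mechanical; the only moving parts are the reformulation of the Brascamp--Lieb inequality into the form \eqref{submainineq} and the identification of the adjoint $T_j^\ast$ as marginalisation over $V_j^{\perp}$, and neither presents any real obstacle.
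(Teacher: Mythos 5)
Your proposal is correct and follows essentially the same route as the paper: reformulate the geometric Brascamp--Lieb inequality \eqref{BL2} with constant $1$ into the form \eqref{submainineq} with $\alpha_j = p_j q_j/q$, observe $q \geq \sum_j p_j \geq 1$, and apply Theorem~\ref{thmmainbaby}. The paper leaves the saturation check and the identification of $T_j^\ast$ as marginalisation over $V_j^\perp$ implicit; you have simply spelled these out, and both are correct.
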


    One simply needs to note (see the discussion in Example~\ref{BL}) that under the hypothesis of this theorem,
$$ \left\| \prod_{j=1}^d (f_j\circ B_j)^{p_jq_j/q} \right\|_{L^q(\mathbb{R}^n)} \leq \prod_{j=1}^d \left\|f_j\right\|_{L^{q_j}(V_j)}^{p_jq_j/q}$$
    and $\sum_{j=1}^d p_j \geq 1$, and thus
    $q \geq 1$. Therefore Theorem~\ref{thmmainbaby} applies.
    
    \medskip
    \noindent
    Except in some rather trivial cases\footnote{For example when the $V_j$ are mutually orthogonal and $p_j = 1$ for all $j$.}
    we do not know any such explicit factorisations with the sharp
    constant $1$.
 For example,
    let $v_1, v_2$ and $v_3$ be unit vectors in $\mathbb{R}^2$ with angle $2\pi/3$ between each pair. Then, with
    $B_j$ being orthogonal projection onto the span of $v_j$, we have
    $$ \frac{2}{3} (B_1^*B_1 + B_2^*B_2 + B_3^*B_3) = I_2.$$
    Take $q_j = 1$ for each $j$ so that $q = 2$. Consequently, for all $G \in L^2(\mathbb{R}^2)$,
    there exist $g_1, g_2, g_3$ such that
     $$ G(x) \leq g_1(x)^{1/3} g_2(x)^{1/3} g_3(x)^{1/3} \; \; \mbox{ a.e.}$$
       and, for each $j$,
       $$ {\rm ess \, sup}_s \int_{\mathbb{R}} g_j(sv_j + tv_j^\perp) {\rm d}t  \leq \|G\|_2.$$
Even in such simple cases as this the factorisation is not yet understood explicitly.

\subsection{General Brascamp--Lieb inequalities}\label{GGBL}
    On the other hand, under the conditions
    \begin{equation}\label{scalingxx}
      \sum_{j=1}^d p_j \, {\rm dim \;  im} B_j = n
    \end{equation}
and 
    \begin{equation}\label{dimensionxx}
      {\rm dim} \, V \leq \sum_{j=1}^d  p_j \, {\rm dim} B_j V
      \end{equation}
    for all $V$ in the lattice of subspaces of $\mathbb{R}^n$ generated by $\{ \ker B_j\}_{j=1}^d$,
    we now indicate how to construct semi-explicit factorisations yielding the finiteness of the
    constant $C$ in \eqref{BL1}. We use the term ``semi-explicit'' because the construction is
    algorithmic in nature. Notwithstanding, we give an informal discursive treatment rather than a collection of flow-charts.
    We assume throughout the discussion that the $B_j$ are nonzero mappings, that is, $n_j = {\rm rank}(B_j) \geq 1$
    for each $j$. (If some $B_j =0$ it plays no role in inequality \eqref{BL1}, nor in \eqref{scalingxx} or \eqref{dimensionxx},
    and it can simply be dropped.) When $n=1$ matters quickly reduce to consideration of H\"older's inequality,
    which is treated in Section~\ref{Holderrevisited} above, so we shall focus on what happens when $n \geq 2$.

    \medskip
    \noindent
    We now sketch how this is done, and we begin with a couple of definitions from \cite{BCCT1} and \cite{BCCT2}.
    Given a collection of linear surjections $\{B_j\}$, its {\em Brascamp--Lieb polytope} is defined by
    $$\mathcal{P}(\{B_j\}) = \{(p_1, \dots, p_d) \in [0, \infty)^d \, : \, {\rm dim} \, V \leq \sum_{j=1}^d p_j \, {\rm dim} B_j V \mbox{ for all subspaces }V \}.$$
    This is manifestly a closed convex set, and, as has been previously noted, is contained in $[0,1]^d$, and is
    therefore the convex hull of its extreme points. Given data $\{B_j\}$ and $\{p_j\}$, a {\em critical subspace}
 is a nontrivial proper subspace $V$ of $\mathbb{R}^n$ for which
 \begin{equation}\label{crit}
{\rm dim} \, V = \sum_{j=1}^d p_j \, {\rm dim} B_j V.
\end{equation}

The construction of the factorisations hinges on the question of existence or non-existence of
critical subspaces for the problem with data $\{B_j, p_j\}$. Indeed, if there is a critical
subspace $V$ for $\{B_j, p_j\}$, then the problem of factorising a function on $\mathbb{R}^n$ decomposes
into two factorisation subproblems on the spaces $V$ and $V^\perp$, each of which has positive but
{\em strictly smaller} dimension than $n$.\footnote{To facilitate the discussion which follows we should
  strictly speaking replace the roles of $\mathbb{R}^n$ and $\mathbb{R}^{n_j}$
  by those of abstract $n$- and $n_j$-dimensional real Hilbert spaces respectively.} This will allow us in effect to induct
on the parameter $n$. These subproblems inherit the same $\{p_j\}$ and
have ``new'' $B_j$ which are related to the ``old'' $B_j$ in a precise way. The two subproblems
inherit the conditions corresponding to \eqref{scalingxx} and \eqref{dimensionxx}: indeed, \eqref{scalingxx}
for each of the two subproblems holds precisely because the subspace $V$ is critical, and we shall make crucial
use of this fact. We isolate the details of how this works -- in particular how factorisations for the two
subproblems combine to give a factorisation for the original problem -- in Section~\ref{BLfactdetails} below.

\medskip
\noindent
On the other hand, if there is no critical subspace for the problem $\{B_j, p_j\}$,
then $(p_1, \dots , p_d)$ lies in the interior of $\mathcal{P}(\{B_j\})$. To establish a
factorisation for the problem in this case, it therefore suffices to (i) establish
factorisations for the extreme points of $\mathcal{P}(\{B_j\})$ and (ii) to show,
given factorisations at the extreme points, how to establish factorisations at all
interior points of $\mathcal{P}(\{B_j\})$. Point (ii) is tantamount to showing that
factorisations behave well under multilinear interpolation, and this we have already
successfully addressed separately in Section~\ref{qzf}.

\medskip
\noindent
To deal with point (i), we consider the Brascamp--Lieb problems at the extreme points 
$(\tilde{p}_1, \dots, \tilde{p}_d)$ of $\mathcal{P}(\{B_j\})$,\footnote{An algorithm for locating
  these extreme points can be found in \cite{Vald}.} and, at each of them, ask the same
question -- does there exist a critical subspace? Since $(\tilde{p}_1, \dots, \tilde{p}_d)$
is an extreme point of $\mathcal{P}(\{B_j\})$, there will certainly be subspaces $V$ of $\mathbb{R}^n$
satisfying
\begin{equation}\label{crit'}
{\rm dim} \, V = \sum_{j=1}^d \tilde{p}_j \, {\rm dim} B_j V.
\end{equation}
If there is a {\em nontrivial and proper} such subspace, we have a critical subspace for the
problem $\{B_j, \tilde{p}_j\}$, and we can proceed as above, in effect going around the loop.
The only remaining possibility is that the only subspaces $V$ of $\mathbb{R}^n$ satisfying
\eqref{crit'} are $\{0\}$ and $\mathbb{R}^n$ itself.

\medskip
\noindent
We are thus left to deal with the special case of our original problem in which $(p_1, \dots, p_d)$ is
an extreme point
of $\mathcal{P}(\{B_j\})$, but for which the only subspaces of $V$ of $\mathbb{R}^n$ satisfying $\eqref{crit}$
are $\{0\}$ and $\mathbb{R}^n$ itself. Matters quickly reduce to rather trivial considerations.
Indeed, in this situation, $\mathcal{P}(\{B_j\})$ consists
precisely of those $(p_1, \dots, p_d) \in [0,\infty)^d$ lying on the hyperplane $\sum_{j=1}^d p_j n_j = n$, and its
extreme points are precisely those of the form $(0, \dots, 0, n/n_j , 0 , \dots, 0)$.
Since $n_j \leq n$ always, and since $\mathcal{P}(\{B_j\}) \subseteq  [0,1]^d$, the only circumstances in which
this case arises is when $n_j = n$ for all $j$. In this case, our Brascamp--Lieb problem at an extreme point of
$\mathcal{P}(\{B_j\})$ is necessarily of the form (modulo permutations of the coordinate axes) 
$$ \int_{\mathbb{R}^n} f_1(B_1 x)^1f_2(B_2 x)^0  \dots  f_d(B_d x)^0 {\rm d}x
\leq C \left(\int_{\mathbb{R}^n} f_1\right)^1 \left(\int_{\mathbb{R}^n} f_2\right)^0 \dots  \left(\int_{\mathbb{R}^n} f_d\right)^0$$
or, equivalently,
$$  \int_{\mathbb{R}^n} f_1(B_1 x){\rm d}x
\leq C \left(\int_{\mathbb{R}^n} f_1\right)$$
where $B_1$ is invertible. This of course holds with equality with $C = (\det B_1)^{-1}$, and a trivial factorisation applies.

\medskip
\noindent
Running the machine described above in reverse will thus eventually furnish a factorisation in the
general case, and, indeed, the only possible loss in terms of sharp constants occurs at steps where
interpolation is employed.

\subsubsection{Factorisation in the presence of a critical subspace}\label{BLfactdetails}
We give the details needed to close the argument set out above in the presence of a critical subspace.
The only place we use criticality is that it implies that \eqref{scalingxx} and \eqref{dimensionxx}
hold for the
two subproblems which arise --  see \cite{BCCT1}. Since these are the necessary and sufficient conditions
for finiteness of the constant, we may assume that factorisations for the two subproblems
exist. (Formally we proceed by induction on $n$, and the case $n=1$ is trivial.)

\medskip
\noindent
Let $B_j : \mathbb{R}^n \to \mathbb{R}^{n_j}$ be linear surjections. Suppose that $U$ is a nontrivial proper
subspace of $\mathbb{R}^n$. (As indicated above, we do not assume that it is a critical
subspace.)
Define $ \tilde{B}_j :\, U \to B_jU$ and $\tilde{\tilde{B}}_j :\,  U^\perp \to (B_jU)^\perp$ by
$$ \tilde{B}_j(x) = B_j x$$
and
$$ \tilde{\tilde{B}}_j(y) = \Pi_{(B_jU)^\perp} B_j y.$$
If some $\tilde{B}_j$ or $ \tilde{\tilde{B}}_j$ is zero we can simply discard it. (It cannot be the case that every
$\tilde{B}_j$ is zero, for if this happened, we would have $U \subseteq \cap_{j=1}^d \ker B_j$, and, as we have noted previously, a necessary condition for finiteness of the Brascamp--Lieb constant is that $\cap_{j=1}^d \ker B_j = \{0\}$. For similar reasons
it cannot be the case that every $\tilde{\tilde{B}}_j$ is zero.)

\medskip
\noindent
Also define $\Gamma_j : \, U^\perp \to B_jU$ by
$$ \Gamma_j(y) = \Pi_{(B_jU)} B_j y.$$
Here, $\Pi_W$ denotes orthogonal projection onto a subspace $W$. So for $x \in U$ and $y \in U^\perp$,
$$B_j(x + y) = \tilde{B}_jx + \tilde{\tilde{B}}_j y + \Gamma_j y = \left(\tilde{B}_jx + \Gamma_j y\right)
+  \tilde{\tilde{B}}_j y \in  B_jU \oplus (B_jU)^\perp.$$

\medskip
\noindent
The two Brascamp--Lieb subproblems arising can be written in the form  
$$ \int_U \prod_{j=1}^d f_j(\tilde{B}_j x)^{p_j/p} {\rm d} x \leq C \prod_{j=1}^d \left(\int f_j\right)^{p_j/p}$$
and
$$ \int_{U^\perp} \prod_{j=1}^d f_j(\tilde{\tilde{B}}_j x)^{p_j/p} {\rm d} x \leq C \prod_{j=1}^d \left(\int f_j\right)^{p_j/p}$$
where $p = \sum_j p_j \geq 1$. With $\alpha_j = p_j/p$, we are entitled to suppose that the following
two corresponding factorisation statments hold:

\medskip
\noindent
For all $H \in L^{p'}(U)$ of norm $1$ there exist $H_1, \dots, H_d$ such that
$$ H(x) \leq \prod_{j=1}^d H_j(x)^{\alpha_j}$$
and, for all $\phi \in L^1(B_jU)$ of norm at most $1$,
$$ \int_U \phi(\tilde{B}_j x) H_j(x) {\rm d}x \leq K_1;$$

\medskip
\noindent
For all $M \in L^{p'}(U^\perp)$ of norm $1$ there exist $M_1, \dots, M_d$ such that
$$ M(y) \leq \prod_{j=1}^d M_j(y)^{\alpha_j}$$
and, for all $\psi \in L^1((B_jU)^\perp)$ of norm at most $1$,
$$ \int_{U^\perp} \psi(\tilde{\tilde{B}}_j y) M_j(y) {\rm d}y \leq K_2.$$

\medskip
\noindent
Given $G  \in L^{p'}(\mathbb{R}^n)$ of norm $1$, we want to subfactorise it as
$$  G(z) \leq \prod_{j=1}^d G_j(z)^{\alpha_j}$$
such that for all $f \in L^1(\mathbb{R}^{n_j})$ of norm at most $1$,
$$ \int_{\mathbb{R}^n} f({B}_j z) G_j(z) {\rm d}z \leq K_1 K_2.$$

\medskip
\noindent
This is a factorisation statement corresponding to the problem
$$ \int_{\mathbb{R}^n} \prod_{j=1}^d f_j({B}_j x)^{p_j/p} {\rm d} x \leq C \prod_{j=1}^d \left(\int f_j\right)^{p_j/p}.$$
If we can do this, then the procedure described above for factorising Brascamp--Lieb problems closes. 

\medskip
\noindent
We begin by writing $G \in L^{p'}$ of norm $1$ as 
\begin{equation}\label{qfj}
  G(x,y) = H_y(x) M(y)
  \end{equation}
where $\|H_y\|_{p'} = 1$ for all $y$ and $\|M\|_{p'} = 1$. We will then factorise
$M$ and each $H_y$ as above, and combine the factorisations to obtain a suitable factorisation for $G$. 

\medskip
\noindent
Indeed, defining $H_y$ and $M$ by 
$$ G(x,y) = \frac{G(x,y)}{\left(\int G(x,y)^{p'} {\rm d} x\right)^{1/p'}} \left(\int G(x,y)^{p'} {\rm d} x\right)^{1/p'} :=  H_y(x) M(y)$$
is essentially the unique way to achieve \eqref{qfj} with the desired conditions.\footnote{Indeed, suppose $G$ is in the mixed-norm space
  $L^r_{{\rm d}y}(L^s_{{\rm d}x})$ and we want to write $G(x,y) = H(x,y)M(y)$ where $\|M\|_r = \|G\|_{L^r(L^s)}$
  and where $\|H(\cdot, y)\|_s = 1$ for all $y$. Integrating $G(x,y)^s = H(x,y)^sM(y)^s$ with respect to $x$ shows that
  the only way to do this is to take $M(y) = \|G(\cdot, y)\|_s$
  and $H(x,y) = G(x,y)/ \|G(\cdot, y)\|_s$. See the remarks at the end of Section~\ref{Loomis--Whitneyrevisited}.}

\medskip
\noindent
Therefore,
$$ G(x,y) \leq  \prod_{j=1}^d [H_{jy}(x) M_j(y)]^{\alpha_j} :=  \prod_{j=1}^d G_j(x,y)^{\alpha_j}$$
where for all $y \in U^\perp$, for all $\phi \in L^1(B_jU)$ of norm at most $1$,
$$ \int_U \phi(\tilde{B}_j x) H_{jy}(x) {\rm d}x \leq K_1$$
and where for all $\psi \in L^1((B_jU)^\perp)$ of norm at most $1$,
$$ \int_{U^\perp} \psi(\tilde{\tilde{B}}_j y) M_j(y) {\rm d}y \leq K_2.$$

\medskip
\noindent
We want to show that for all $f \in L^1(\mathbb{R}^{n_j})$ of norm at most $1$,
$$ \int_{\mathbb{R}^n} f({B}_j z) G_j(z) {\rm d}z = \int_{U^\perp} \int_U f({B}_j(x,y))H_{jy}(x) M_j(y) {\rm d}x {\rm d}y
\leq K_1 K_2.$$

\medskip
\noindent
Fix $y \in U^\perp$ and write the inner integral over $U$ as
$$\int_U f({B}_j(x,y))H_{jy}(x) {\rm d}x
= \int_U f({B}_jx + B_jy) H_{jy}(x) {\rm d}x.$$
Now $f({B}_jx + B_jy) = f((\tilde{B}_j x +\Gamma_j y)+
\tilde{\tilde{B}}_jy)$. For  $w \in B_jU$ and $\xi \in (B_jU)^\perp$
let $\phi_\xi(w) := f( w + \xi)$.
Therefore $f({B}_jx + B_jy) = \phi_{\tilde{\tilde{B}}_j y}(\tilde{B}_j x + \Gamma_j y)
=( \tau_{(\Gamma_j y)} \phi_{\tilde{\tilde{B}}_j y})(\tilde{B}_j x)$, where $(\tau_\eta \chi)( \cdot)
= \chi( \cdot + \eta)$ denotes translation by $\eta$. So,
$$\int_U f({B}_j(x,y))H_{jy}(x) {\rm d}x
= \int_U ( \tau_{(\Gamma_j y)} \phi_{\tilde{\tilde{B}}_j y})(\tilde{B}_j x) H_{jy}(x) {\rm d}x
\leq K_1 \| \tau_{(\Gamma_j y)} \phi_{\tilde{\tilde{B}}_j y}\|_1$$
by what we are assuming.

\medskip
\noindent
Now, by translation invariance,  $\| \tau_{(\Gamma_j y)} \phi_{\tilde{\tilde{B}}_j y}\|_1 = \|\phi_{\tilde{\tilde{B}}_j y}\|_1$.
Therefore, letting $\psi(\xi) :=   \|\phi_{\xi}\|_1$ for $\xi \in (B_jU)^\perp$,
$$ \int_{\mathbb{R}^n} f({B}_j z) G_j(z) {\rm d}z \leq K_1 \int_{U^\perp}  \psi(\tilde{\tilde{B}}_j y)  M_j(y){\rm d}y
\leq K_1 K_2 \|\psi\|_1.$$

\medskip
\noindent
Finally,
$$ \|\psi\|_1 = \int_{(B_jU)^\perp} \left(\int_{B_jU} \phi_\xi(w) {\rm d} w \right) {\rm d} \xi = \int_{\mathbb{R}^{n_j}} f(z) {\rm d} z = 1,$$
and this gives what we wanted.

\section{Multilinear Kakeya inequalities revisited}\label{MKrevisited}
Recall that we have families $\mathcal{P}_j$ of $1$-tubes in $\mathbb{R}^n$, and for $P \in \mathcal{P}_j$, 
its direction $e(P) \in \mathbb{S}^{n-1}$ satisfies
$|e(P) - e_j| \leq c_n$ where $c_n$ is a small dimensional constant. The multilinear Kakeya theorem of
Guth \cite{MR2746348} (see also  \cite{CV}) states that
$$ \Big\|\prod_{j=1}^n \left(\sum_{P _j \in \mathcal{P}_j} a_{P_j} \chi_{P_j}(x) \right)^{1/n} \Big\|_{L^{n/(n-1)}(\mathbb{R}^n)} 
\leq C_n \prod_{j=1}^n \left(\sum_{P_j \in \mathcal{P}_j} a_{P_j}\right)^{1/n}.$$
This inequality is of the form \eqref{submainineq} with $X = \mathbb{R}^n$, $q = n/(n-1)$, $Y_j = \mathcal{P}_j$ with
counting measure, $p_j = 1$ for all $j$, $\alpha_j = 1/n$ for all $j$, and $T ((a_{P_j}))(x) = \sum_{P _j \in \mathcal{P}_j} a_{P_j} \chi_{P_j}(x)$.

\medskip
\noindent
Guth proved this result essentially by establishing a suitable subfactorisation for each nonnegative
$M \in L^n(\mathbb{R}^n)$, and then applying Proposition~\ref{thmguthbaby}. His subfactorisation is
described in terms of an auxiliary polynomial $p$ of `low' degree
dominated by $\|M\|_n$, whose zero set $Z_p$ has `large' {\bf visibility} on each unit cube $Q$ of $\mathbb{R}^n$ in the
sense that ${\rm vis}(Z_p \cap Q) \gtrsim \int_Q M$. We do not enter into the details of the definition of visibility,
nor into how this gives the desired subfactorisation, but instead refer the reader to \cite{MR2746348} and \cite{CV}.
(In the latter paper the approach using Proposition~\ref{thmguthbaby} is explicit while in the former it is implicit.
And one should note that the definition of visibility used in \cite{CV} is a power of the original one used in
\cite{MR2746348}.)
It was the shock of seeing such an unlikely functional-analytic method succeed which inspired us to study the general 
question of necessity of factorisation as taken up in this paper. In hindsight, our
linkage of subfactorisation of functions with Maurey's theory of factorisation of operators helps place Guth's method
in perspective.

\medskip
\noindent
Bourgain and Guth in \cite{BG} established an affine invariant form of the multilinear Kakeya inequality, removing
the hypothesis that $|e(P) - e_j| \leq c_n$ for $P \in \mathcal{P}_j$, at the price of inserting a damping factor
on the left-hand side which is consistent with the affine-invariant Loomis--Whitney inequality of Section~
\ref{Loomis--Whitneyrevisited}. That is, they proved that for $\mathcal{P}_j$ arbitrary families of $1$-tubes,
$$ \int_{\mathbb{R}^n}\left(\sum_{P_1 \in \mathcal{P}_1} a_{P_1} \chi_{P_1}(x) \dots \sum_{P_n \in \mathcal{P}_n} a_{P_n} \chi_{P_n}(x)
  e(P_1) \wedge \dots \wedge e(P_n) \right)^{1/(n-1)} {\rm d}x $$
$$ \leq C_n \prod_{j=1}^n \left(\sum_{P_j \in \mathcal{P}_j} a_{P_j}\right)^{1/(n-1)}.$$

  \medskip
  \noindent
  As the reader will readily verify (using the same argument as in the proof of Proposition~\ref{thmguthbaby},
  see also Section~\ref{culture} above),
  in order to establish this, it suffices to show that for every nonnegative
  $M \in L^n(\mathbb{R}^n)$ which is constant on unit cubes in a standard lattice $\mathcal{Q}$,
  there exist nonnegative functions $S_j : \mathcal{Q} \times \mathcal{P}_j \to \mathbb{R}$ such that
  $$ M(Q) \lesssim \frac{S_1(Q, P_1)^{1/n} \cdots  S_n(Q, P_n)^{1/n}}{ e(P_1) \wedge \dots \wedge e(P_n)^{1/n}}$$
  whenever the $1$-tubes $P_j$ meet at $Q$, and, for all $j$, for all $P_j \in \mathcal{P}_j$,
  $$ \sum_{Q \in \mathcal{Q}, \, Q \cap P_j \neq \emptyset} S_j(Q, P_j) \lesssim \left(\sum_{Q \in \mathcal{Q}} M(Q)^n\right)^{1/n}.$$
  And indeed this is what Bourgain and Guth essentially did (see also \cite{CV}). It is therefore very tempting to
  ask whether, in analogy with the situation of Theorem~\ref{thmmainbaby}, this method is {\em guaranteed} to work
  in so far as the statement of the affine-invariant multilinear Kakeya inequality automatically implies the existence of a
  subfactorisation as in the last two displayed inequalities.
  Unfortunately, as we have established above in Section~\ref{fdmgs}, there is no such general functional-analytic
  principle which guarantees this.
  
\medskip
\noindent
The recent multilinear Kakeya $k_j$-plane inequalities, and indeed the even more general perturbed Brascamp--Lieb
inequalities, both recently established by Zhang \cite{Z}, also fit into the framework we consider, the latter
as a generalisation of inequality \eqref{BL3}.  

\subsection{The finite field multilinear Kakeya inequality}\label{ffmk}

Zhang \cite{Zh2} has recently solved the discrete analogue of the multilinear Kakeya problem. Let $\mathbb{F}$
  be a field and let $\mathcal{L}_j$ be arbitrary families of lines in $\mathbb{F}^n$.
  For $l_j \in \mathcal{L}_j$ declare $e(l_1) \wedge \dots \wedge e(l_n)$ to be $1$ if the vectors $\{e(l_j)\}$
  are linearly independent and to be $0$ otherwise. Zhang has proved that for a certain $C_n$ depending only on $n$,
\begin{equation}
\label{ffkakeya}
\begin{aligned}
 \sum_{ x \in \mathbb{F}^n}&\left(\sum_{l_1 \in \mathcal{L}_1} a_{l_1} \chi_{l_1}(x) \dots \sum_{l_n \in \mathcal{L}_n} a_{l_n} \chi_{l_n}(x)
  e(l_1) \wedge \dots \wedge e(l_n) \right)^{1/(n-1)} \\
& \leq C_n \prod_{j=1}^n \left(\sum_{l_j \in \mathcal{L}_j} a_{l_j}\right)^{1/(n-1)}.
\end{aligned}
\end{equation}

\medskip
\noindent
  When $n=2$ the constant $C_2 = 1$, as is readily verified using $1/(n-1) = 1$ and changing the order of summation on
  the left-hand side. Moreover, for general $n$, if all the lines in $\mathcal{L}_j$ are parallel to some fixed vector $y_j$ with
  $\{y_j\}_{j=1}^n$ linearly independent, the constant is likewise $1$, since matters can then be reduced to the classical
  Loomis--Whitney inequality via an invertible linear transformation of $\mathbb{F}^n$, (or one can write down a suitable
  factorisation as in Example~\ref{Loomis--Whitneyrevisited}).

  \medskip
  \noindent
  The presence of the factor $e(l_1) \wedge \dots \wedge e(l_n)$ in \eqref{ffkakeya} precludes any assertion that
  \eqref{ffkakeya} is equivalent to a factorisation statement: see Section~\ref{culture} above.
If however the $\mathcal{L}_j$ are presumed to satisfy the
property that if $(l_1, \dots , l_n) \in \mathcal{L}_1 \times \dots \times \mathcal{L}_n$,
then the directions $\{e(l_1), \dots , e(l_n)\}$ are linearly independent,
  we have that the term $e(l_1) \wedge \dots \wedge e(l_n)$ is identically $1$, and the result then falls under the scope of
  Theorem~\ref{thmmain}.

  \medskip
  \noindent
  In particular, when $n = 2$ and we have two finite families of lines $\mathcal{L}_1$
  and $\mathcal{L}_2$ in $\mathbb{F}^2$ such that no line in $\mathcal{L}_1$ is parallel to any line in $\mathcal{L}_2$,
  this holds. Hence we obtain:

  \begin{prop}
Let $\mathcal{L}_1$
and $\mathcal{L}_2$ be finite families of lines in $\mathbb{F}^2$ such that no line in $\mathcal{L}_1$ is parallel to any line in $\mathcal{L}_2$. Let $J \subseteq \mathbb{F}^2$ be the set of points where some $l_1 \in \mathcal{L}_1$ meets an $l_2 \in \mathcal{L}_2$.
  Suppose $\sum_{x \in J} G(x)^2 = 1$. Then there exist
  $g_1, g_2 : J \to \mathbb{R}_+$ such that for all $x \in J$
  $$ G(x) = \sqrt{g_1(x) g_2(x)}$$
  and, moreover, for all $l_j \in \mathcal{L}_j$, $j=1,2$,
  $$ \sum_{x \in J \cap l_j} g_j(x) \leq 1.$$
\end{prop}
  In spite of the extreme simplicity of the original problem, no procedure for coming to an explicit such factorisation is currently known.

  \medskip
  \noindent
 Jon Bennett had asked whether, even in higher dimensions, the constant $C_n$ 
 in the finite field multilinear Kakeya inequality might still be $1$. This is true
 in the case of $\mathbb{F}_2^3$. However, this turns out to have been
 over-optimistic, and we have:

 \begin{prop}
   Suppose the discrete multilinear Kakeya inequality \eqref{ffkakeya} holds in the
   case $n=3$ for $\mathbb{F} = \mathbb{F}_3$. Then $C_3 > 1.04$.
 \end{prop}

 \medskip
 \noindent
We remark that Tidor, Yu and Zhao \cite{TYZ} have very recently established numerical values for the constants in Zhang's theorem, and in particular they show that $C_3 \leq \sqrt{6}$.
 
 \begin{proof}
We construct an example. In this example, for each $j=1,2,3$, we nominate two directions, and the family $\mathcal{L}_j$ will consist of all lines
with one of these directions. The two directions for each $j$ will be chosen so that each of the eight choices of one
direction from each of the three families results in a linearly independent set of directions, so that the terms
$e(l_1) \wedge e(l_2) \wedge e(l_3)$ are all $1$.
Each family of coefficients $a$ -- as a function defined on $\mathcal{L}_j$ and more properly denoted by $a_j$
-- is defined to be supported on three lines from $\mathcal{L}_j$ in such a way
that the $x$-summand on the left-hand side of \eqref{ffkakeya} is non-zero at five points. Each $a$ will take
nonzero values in $\{1,2\}$ and thus each line under consideration will have a {\em weight} equal to $1$ or $2$.
For each $j$ we shall have that two of the three lines pass through two of these five points and the remaining
line passes through the remaining point. 

\medskip
More concretely, let
\begin{itemize}
\item $\mathcal{L}_1$ be the lines with direction $(1,1,0)$ or $(2,1,1)$;
\item $\mathcal{L}_2$ be the lines with direction $(0,1,0)$ or $(0,1,1)$; and
\item $\mathcal{L}_3$ be the lines with direction $(1,0,1)$ or $(0,0,1)$.
\end{itemize}
It is straightforward to verify that the directions of any three lines, one from each collection, span $\mathbb{F}_3^3$.

\medskip
\noindent
We now proceed to properly define the coefficients $a$. We denote by $a_j$ the function whose domain is $\mathcal{L}_j$,
and which is defined as follows:
\begin{itemize}
	\item Let $a_1$ be
		\begin{itemize}
			\item $2$ on the line with direction $(1,1,0)$ passing through $(0,2,2)$ and $(2,1,2)$,
			\item $2$ on the line with direction $(2,1,1)$ passing through $(0,2,1)$ and $(2,0,2)$, 
			\item $1$ on the line with direction $(1,1,0)$ through $(0,0,0)$, and
			\item $0$ on other lines of $\mathcal{L}_1$.
		\end{itemize}
	\item Let $a_2$ be
		\begin{itemize}
			\item $2$ on the line with direction $(0,1,0)$ passing through $(2,0,2)$ and $(2,1,2)$,
			\item $2$ on the line with direction $(0,1,1)$ passing through $(0,0,0)$ and $(0,2,2)$,
			\item $1$ on the line with direction $(0,1,0)$ through $(0,2,1)$, and
			\item $0$ on other lines  of $\mathcal{L}_2$.
		\end{itemize}
	\item Let $a_3$ be 
		\begin{itemize}
			\item $2$ on the line with direction $(0,0,1)$ passing through $(0,2,1)$ and $(0,2,2)$,
			\item $2$ on the line with direction $(1,0,1)$ passing through $(0,0,0)$ and $(2,0,2)$,
			\item $1$ on the line with direction $(0,0,1)$ through $(2,1,2)$, and
			\item $0$ for other lines of of $\mathcal{L}_3$.
		\end{itemize}
\end{itemize}
\noindent
Each $\mathcal{L}_j$ has two lines of $a$-value or weight $2$ and one of weight $1$.

\medskip
We can see that the only points where lines from all three families intersect are the five points mentioned, namely
$(0,0,0)$, $(0,2,1)$, $(0,2,2)$, $(2,0,2)$ and $(2,1,2)$. At the three points $(0,0,0)$, $(0,2,1)$ and $(2,1,2)$
we have two lines of weight $2$ and one of weight $1$ meeting; at the two points $(0,2,2)$ and $(2,0,2)$ we have
three lines of weight $2$ meeting. So the value of the $x$-summand on the left-hand side of \eqref{ffkakeya}
is $2$ at the three points $(0,0,0)$, $(0,2,1)$ and $(2,1,2)$, and is $2^{3/2}$ at the two points $(0,2,2)$ and $(2,0,2)$.
The left-hand side adds up to $3\cdot2+2\cdot2^{3/2} > 11.65$.
The value of the right-hand side of \eqref{ffkakeya} is $C_3 \cdot 5^{3/2}$ $\leq C_3 \cdot 11.19$. This shows that $C_3 \geq
\frac{6 + 2^{3/2}}{5^{3/2}} > 11.65/11.19 > 1.04 > 1$. 

\end{proof}

A counterexample to the conjecture that \eqref{ffkakeya} holds with $C_n=1$ was first found by use of the duality theory
developed above, which, as we have mentioned, is valid under the assumption that any $n$-tuple of lines taken from
$\mathcal{L}_1 \times \dots \times \mathcal{L}_n$ has linearly independent directions.
To explain why this route was taken, let us assume that we are considering a finite field of
size $q$. If we let $\mathcal{L}_j$ consist of all lines with directions in some given set of size $r$ then the input to
\eqref{ffkakeya}, namely the tuple $(a_1,\dots,a_n)$ belongs to a real vector space of dimension $nq^{n-1}r$.
The input to problem \eqref{mainprob} is the function $G$ which belongs to a real vector space of dimension $q^{n}$.
In our case we have $n=q=3$ and $r=2$ so the input to the problem \eqref{mainprob} belongs to a smaller vector space
than the input to \eqref{ffkakeya}.
The additional cost of solving the convex optimisation problem compared with the cost of simply evaluating
each side of \eqref{ffkakeya} does not significantly alter the balance of cost.

\medskip
The solution to the convex optimisation problem was found using the software package CVXOPT \cite{CVXOPT},
which yields the solution for both the primal and dual problems. The solution to the dual problem
was then slightly simplified by hand for neater exposition and this is what is presented here.
\bibliographystyle{plain}

\bibliography{}
\end{document}